\documentclass[11pt]{article}
\usepackage{amssymb,amsmath}
\usepackage{amsthm,color}
\newtheorem{theorem}{Theorem}[section]
\newtheorem{proposition}{Proposition}[section]

\newtheorem{lemma}{Lemma}[section]

\DeclareMathOperator{\Exp}{Exp}

\DeclareMathOperator{\Ad}{Ad}
\DeclareMathOperator{\ad}{ad}

\DeclareMathOperator{\Det}{Det}
\DeclareMathOperator{\Id}{Id}
\DeclareMathOperator{\id}{id}
\DeclareMathOperator{\tr}{tr}
\DeclareMathOperator{\Tr}{Tr}
    
\DeclareMathOperator{\Lie}{Lie}

\DeclareMathOperator{\rank}{rank}

\DeclareMathOperator{\Herm}{Herm}

\DeclareMathOperator{\Cos}{Cos}
\DeclareMathOperator{\Sin}{Sin}

\begin{document}

\title{Intertwining operators for the generalized principal series on symmetric $R$-spaces}

\author{Jean-Louis Clerc}

\date{July 27, 2012}
\maketitle

\begin{abstract} Three questions about the intertwining operators for the generalized principal series on a symmetric $R$-space are solved : description of the functional kernel, both in the noncompact and in the compact picture, domain of convergence, meromorphic continuation. A large use is made of the theory of positive Jordan triple systems.  The meromorphic continuation of the intertwining integral is achieved via a Bernstein-Sato identity, and a precise description of the poles is obtained.
\end{abstract}
\footnotemark[0]{2000 Mathematics Subject Classification : 22E45, 43A80}

\section*{Introduction}

A \emph{symmetric $R$ space}, in a somewhat outdated terminology, is a real (generalized) \emph{flag manifold} $X=G/P$ ($G$ a real semi-simple Lie group, $P$ a parabolic subgroup of $G$), which is at the same time a \emph{Riemannian symmetric space}. Or vice-versa, $X$ is a compact Riemannian symmetric space with an "extra" action of a larger(noncompact) semisimple Lie group $G$ (the "big group") of diffeomorphisms of $X$ (for this point of view see \cite{na}, \cite{t}). 

The action of $G$ on $X$ gives raise to a family of representations $(\rho_\lambda)_{\lambda \in \mathbb C}$. From a geometric point of view, it is the action of $G$ on the $\lambda$-densities on $X$. From the semisimple harmonic analysis point of view, it is a special case of an induced representation from a character (depending on $\lambda$) of the parabolic subgroup $P$,  usually called the \emph{generalized principal series}. Associated to this family of representations is a family of operators $(J_\lambda)_{\lambda\in \mathbb C}$  which intertwine $\rho_\lambda$ with another representation induced from the opposite parabolic subgroup. The operators are defined for large values of $\Re \lambda$, and then extended meromorphically in the parameter $\lambda$, a special case of  a theorem  due to Vogan and Wallach (see \cite{vw}, \cite {w}). The theory however is not explicit enough to give complete information about the poles. 

Several authors studied specific examples : rank 1 compact symmetric spaces (\cite {jw}), Grassmannian manifolds (\cite{z2}, \cite {op}), Shilov boundaries of tube-type domains (\cite{oz}), spaces associated to real Jordan algebras (\cite{sah}, \cite{z1}), group case(\cite{jo}), mostly using the fact that $J_\lambda$ is a convolution operator on $X$, and computing the spectrum of the operator by use of the harmonic analysis on the compact Riemannian symmetric space $X$.

But to my best knowledge, this is the first time where all symmetric $R$-spaces are treated simultaneously and systematically. Following O. Loos (see \cite{l4}), symmetric $R$-spaces are approached through the theory of \emph{positive Jordan triple systems} (PJTS for short). 

Positive Hermitian Jordan triple systems  (PHJTS for short) and the associated compact Hermitian symmetric spaces play a special (double) role. Viewed with $\mathbb R$ as base-field, they are examples of PJTS. On the other hand, a PJTS (resp. a symmetric $R$-space) is a  \emph{real form} of a PHJTS (resp. of  a compact Hermitian symmetric space).

The  basic strategy is the following : first treat the case of PHJTS/compact Hermitian symmetric spaces with $\mathbb C$ as base-field, taking advantage of the holomorphic properties. Then, "restrict" to real forms to obtain the result for PJTS/symmetric $R$-spaces. Restriction should not be understood too strictly. In fact positive Jordan triples split in three families : the Hermitian case, the \emph{reduced} case and the \emph{non reduced} case, each family demanding its own interpretation of restriction. The Bernstein-Sato identity which is the basic tool for the meromorphic continuation comes in three "cousin" versions, all stemming from the same \emph{complex} Bernstein-Sato identity.

The example of the spheres, viewed as real projective quadrics may serve as an illustration. The sphere $S^n$ is a Riemannian symmetric space of rank $1$, which when viewed as a real projective quadric, becomes a $R$-space. The one dimensional sphere $S^1$ is a real form of the complex projective line $\mathbb P^1(\mathbb C)$, a compact Hermitian symmetric space of rank $1$ (reduced case). As $S^2 \simeq \mathbb P_1(\mathbb C)$, the two-dimensional sphere $S^2$ \emph{is} a compact Hermitian symmetric space of rank $1$, and it can be realized as a real form of $ \mathbb P^1(\mathbb C)\times \overline{\mathbb P^1(\mathbb C)}$, where $\overline{\mathbb P^1(\mathbb C)}$ is the complex projective line, but with the opposite complex structure. For $n\geq 3$,  the sphere $S^n$ is a real form of the complex quadric $\mathbb Q_n(\mathbb C)$, which is a compact Hermitian symmetric space of rank $2$ (non reduced case). 

We finish this introduction by a presentation of the different sections.  Section 1 addresses the three main questions precisely : 

$\bullet$ explicit form of the kernel of the intertwining operator, both in the compact and in the noncompact picture

$\bullet$ domain of convergence

$\bullet$ meromorphic continuation.

In Section 2  the relation between positive Jordan triple systems and symmetric $R$-spaces is exposed with some details, because the literature is not abundant on the subject, Section 3 presents the classification of  simple symmetric $R$-spaces,  Section 4 introduces the \emph{complex canonical kernel} for PHJTS, which is used in Section 5 to construct the \emph{canonical kernel} for a PJTS. Section 6 gives the realization of the canonical kernel in the compact picture, and gives the solution to the first question.  In section 7, the domain of convergence is determined, thus answering the second question. The rest of the paper (Sections 8--13) is devoted to the third question. The \emph{fundamental kernel} is introduced, first its complex version for a PHJTS (Section 8), then its real version for a PJTS (Section 9). Section 10 is the heart of the paper, as it produces  a Bernstein-Sato identity for the complex fundamental kernel of a PHJTS of tube-type, which is extended in Section 11 to general PHJTS. In Section 12, a Bernstein-Sato identity is proved for the fundamental kernel, first for PHJTS (subsection 12.1), then for a reduced PJTS (subsection 12.2), and eventually for a non-reduced PJTS (subsection 12.3).  Section 13 is devoted to the answer to the third question, namely the meromorphic continuation. Due to the length of the paper, possible applications to the study of the representations $\rho_\lambda$ are not considered.

\section {Symmetric $R$-space, generalized principal series and  intertwining operators}

A $R$-space $X$ is any quotient $X=G/P$, where $G$ is a real semi-simple Lie group (connected and with finite center) and $P$ a parabolic subgroup of $G$. Let $o=eP$ be the origin in $X$. Let $\sigma$ be a Cartan involution of $G$ and let $K=G^\sigma$ be the associated maximal compact subgroup of $G$. As $G=KP$, $X$ is homogeneous under the action of $K$. Let $K^o$ be the stabilizer of $o$ in $K$, so that $X\simeq K/K^o$. The space $X$ is said to be a \emph{ symmetric $R$-space} if there exists an involution $\theta$ of $K$ such that \[K^\theta_0\subset K^o \subset K^\theta\ ,\]
where $K^\theta= \{k\in K, \theta(k)=k\}$ and $K^\theta_0$ is the connected component of the neutral element of $K^\theta$.

Let $d\sigma$ be a $K$-invariant measure on $X$ (the normalisation will be specified later on). The group $G$ acts smoothly on $X$, and for $g\in G$ and $x\in X$, let $j(g,x)=d\sigma(gx)/d\sigma(x)$ be the Jacobian of $g$ at $x$.

Let $\lambda$ be a complex number. Then $G$ acts naturally on $\lambda$-densities on $X$. It is convenient to trivialize the density bundle by use of the measure $d\sigma$, so that the sections are identified with functions on $X$. For $g$ in $G$ define the operator $\rho_\lambda(g)$ on $\mathcal C^\infty(S)$ by 
\begin{equation}\label{rep}
\rho_\lambda(g) f (x) = j(g^{-1},x)^{\frac{1}{2}+\lambda} f(g^{-1}(x))
\end{equation}
$f\in \mathcal C^\infty(X)$. Then $\rho_\lambda$ is a  representation of $G$ on $\mathcal C^\infty(X)$, called the \emph{generalized (scalar) principal series} of $G$. 

The reason for the shift by $\frac{1}{2}$ in the parameter $\lambda$ is to make the following \emph{duality} property more symmetric.

\begin{proposition} Let $f, \varphi$ be in $\mathcal C^\infty(X)$
\[\int_V \big(\rho_\lambda(g) f\big)(x)\varphi(x) d\sigma(x) = \int_V f(y) \big(\rho_{-\lambda}(g^{-1})\varphi\big)(y) d\sigma(y)\ .
\]
\end{proposition}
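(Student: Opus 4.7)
The plan is a direct change-of-variables computation, exploiting the cocycle property of the Jacobian. I start from the left-hand side, substitute the definition (\ref{rep}), and perform the substitution $y=g^{-1}x$, so that $x=gy$ and $d\sigma(x)=j(g,y)\,d\sigma(y)$. This transforms the integral into
\[
\int_X j(g^{-1},gy)^{\frac{1}{2}+\lambda}\,f(y)\,\varphi(gy)\,j(g,y)\,d\sigma(y).
\]

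The key algebraic step is the cocycle identity for the Jacobian: applying the chain rule to $g^{-1}\circ g=\mathrm{id}$ gives $j(g^{-1},gy)\,j(g,y)=j(\mathrm{id},y)=1$, hence $j(g^{-1},gy)=j(g,y)^{-1}$. Inserting this into the previous display collapses the exponents to $-(\tfrac12+\lambda)+1=\tfrac12-\lambda$, yielding
\[
\int_X f(y)\,j(g,y)^{\frac{1}{2}-\lambda}\,\varphi(gy)\,d\sigma(y).
\]
Since $g=(g^{-1})^{-1}$, the integrand on the right equals $f(y)\bigl(\rho_{-\lambda}(g^{-1})\varphi\bigr)(y)$ by the definition of $\rho_{-\lambda}$, which is the required right-hand side.

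There is no real obstacle: once one accepts that $j$ is the genuine Radon--Nikodym Jacobian of the $G$-action on the measure $d\sigma$ (so that the change of variable is legitimate), the calculation is formal. The only point to emphasize is that the shift by $\tfrac12$ is exactly what makes the two exponents $\tfrac12+\lambda$ and $\tfrac12-\lambda$ sum to $1$, which is the weight needed to cancel the Jacobian produced by the substitution; this is the conceptual reason behind the normalization and explains why the duality takes the symmetric form $\lambda\leftrightarrow-\lambda$.
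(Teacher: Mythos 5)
Your proof is correct and is exactly the paper's argument (the paper simply says ``the proof is by the change of variable $y=g^{-1}(x)$''), with the cocycle identity $j(g^{-1},gy)\,j(g,y)=1$ and the exponent bookkeeping spelled out explicitly. Nothing to add.
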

The proof is by the change of variable $y=g^{-1}(x)$. The duality relation implies the unitarity of $\rho_\lambda$ on $L^2(X,d\sigma)$ when $\lambda$ is pure imaginary.

For $\mu$ a complex number, let $\rho_\mu^\sigma$ be the representation of $G$ defined by $\rho_\mu^\sigma(g)= \rho_\mu(\sigma(g))$. For generic values of $\lambda$, it is known (\cite{vw}) that there exists an intertwining operator between $\rho_\lambda$ and $\rho_{-\lambda}^\sigma$. The first goal of the present paper is to give a functional construction of this operator.

 Let $\widetilde c$ be a continuous everywhere nonnegative function on $X\times X$ satisfying the following \emph{covariance property} 
\begin{equation}\label{covtildec}
 \widetilde c\big(g(x), \sigma(g)(y)\big) = j(g,x)\widetilde c(x,y) j\big(g,\sigma(y)\big)
 \end{equation}
for all $x,y\in X\times X$ and $g\in G$. Let $J_\lambda$ be the operator (formally) defined by
\begin{equation}\label{jlambda}
J_\lambda(f)(x)  = \int_X \widetilde c(x,y)^{-\frac{1}{2} +\lambda} f(y) d \sigma(y)\ .
\end{equation}

\begin{proposition}
Let $\lambda\in \mathbb C$ such that
\begin{equation}\label{convergence}
\int_X \widetilde c(o,y)^{-\frac{1}{2} +\Re(\lambda)}d\sigma(y) <+\infty\ .
\end{equation}
Then the operator $J_\lambda$ defined by \eqref {jlambda} is a continuous operator on $\mathcal C^\infty(S)$ such that
\begin{equation}\label{intw}
J_\lambda\circ \rho_\lambda(g) = \rho_{-\lambda}^\sigma(g)\circ J_\lambda\ .
\end{equation}

\end{proposition}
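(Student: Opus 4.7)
The statement has three parts: absolute convergence of $J_\lambda f(x)$ at every $x \in X$, continuity of $J_\lambda \colon \mathcal C^\infty(X) \to \mathcal C^\infty(X)$, and the intertwining relation \eqref{intw}. The plan is to specialize the covariance \eqref{covtildec} in two complementary ways: at $g = k \in K$ for the first two assertions, and at $g' = \sigma(g)$ for the third.

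For convergence and continuity, $K$-invariance of $d\sigma$ forces $j(k,\cdot) \equiv 1$ for $k \in K$, and $\sigma$ fixes $K$ pointwise, so \eqref{covtildec} at $g = k$ collapses to the diagonal $K$-invariance $\widetilde c(k(x), k(y)) = \widetilde c(x,y)$. Choosing $k \in K$ with $x = k(o)$ and substituting $y = k(y')$ (which preserves $d\sigma$), the assumption \eqref{convergence} at $o$ transfers to any $x$; combined with boundedness of $f$ on the compact space $X$, this gives absolute convergence of $J_\lambda f(x)$ at every $x$. Writing then $J_\lambda f(x) = \int_X \widetilde c(o, y')^{-\frac{1}{2}+\lambda} f(k_x y') \, d\sigma(y')$ via a smooth local section $x \mapsto k_x$ of $K \to X$, the integrand is dominated by the integrable function $\widetilde c(o,\cdot)^{-\frac{1}{2}+\Re \lambda}$ times a bound on $f$ and its $K$-derivatives, so differentiation under the integral is legitimate and yields both smoothness of $J_\lambda f$ and continuity in the Fréchet topology of $\mathcal C^\infty(X)$.

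For the intertwining relation, substitute $z = g^{-1}(y)$ in $J_\lambda(\rho_\lambda(g) f)(x)$, using $d\sigma(y) = j(g,z) \, d\sigma(z)$ and the chain-rule identity $j(g^{-1}, g(z)) = j(g,z)^{-1}$, to obtain
\[ J_\lambda(\rho_\lambda(g) f)(x) = \int_X \widetilde c(x, g(z))^{-\frac{1}{2}+\lambda} \, j(g, z)^{\frac{1}{2} - \lambda} \, f(z) \, d\sigma(z). \]
Apply \eqref{covtildec} with $g' = \sigma(g)$ (so that $\sigma(g') = g$), then translate $x$ by $\sigma(g^{-1})$ and use $j(\sigma(g), \sigma(g^{-1})(x)) = j(\sigma(g^{-1}), x)^{-1}$, which rewrites the kernel as
\[ \widetilde c(x, g(z)) = j(\sigma(g^{-1}), x)^{-1} \, \widetilde c(\sigma(g^{-1})(x), z) \, j(\sigma(g), \sigma(z)). \]
Raising to the $(-\frac{1}{2} + \lambda)$th power and reassembling reproduces exactly $\rho_{-\lambda}^\sigma(g) J_\lambda f(x) = j(\sigma(g^{-1}), x)^{\frac{1}{2} - \lambda} J_\lambda f(\sigma(g^{-1})(x))$, provided the residual Jacobian factor $j(\sigma(g), \sigma(z))^{-\frac{1}{2}+\lambda} j(g,z)^{\frac{1}{2}-\lambda}$ collapses to $1$.

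The one step I expect to be non-mechanical is therefore the Jacobian identity $j(\sigma(g), \sigma(z)) = j(g, z)$, which encodes the compatibility $\sigma(g) \cdot \sigma(z) = \sigma(g \cdot z)$ of the Cartan involution with the $G$-action on $X$ together with the $\sigma$-invariance of the $K$-invariant measure $d\sigma$ implicit in the symmetric $R$-space setup. Once this is granted the algebraic matching of Jacobian powers is routine, and Fubini's theorem is harmless because $X$ is compact and absolute integrability has already been established.
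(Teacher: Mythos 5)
Your overall strategy is the same as the paper's: diagonal $K$-invariance of $\widetilde c$ (obtained from \eqref{covtildec} with $g=k\in K$, using $\sigma(k)=k$ and $j(k,\cdot)\equiv 1$) reduces continuity to the statement that $J_\lambda$ is convolution by an integrable kernel, and the intertwining relation follows from the substitution $y=g(z)$ together with the covariance of the kernel. The convergence and continuity parts of your argument are fine, and somewhat more detailed than the paper's.

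There is, however, a genuine problem in the step you yourself flag. The identity $j(\sigma(g),\sigma(z))=j(g,z)$ cannot be invoked as ``implicit in the symmetric $R$-space setup'': the Cartan involution $\sigma$ sends $P$ to the opposite parabolic $P^-$, so it does not descend to a self-map of $X=G/P$, and $\sigma(z)$ for $z\in X$ is not defined. (If one identifies $X\simeq K/K^o$ and lets $\sigma$ act as the identity there --- the only natural choice, since $\sigma$ fixes $K$ pointwise --- the identity becomes $j(\sigma(g),z)=j(g,z)$, which is false in general.) The source of the difficulty is that the factor $j\big(g,\sigma(y)\big)$ in \eqref{covtildec} is a misprint for $j\big(\sigma(g),y\big)$; compare statement $ii)$ of Theorem \ref{ctildex} and the noncompact analogue \eqref{ccov}, and note that the computation itself only works with the corrected form. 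With the covariance read as $\widetilde c\big(g(x),\sigma(g)(y)\big)=j(g,x)\,\widetilde c(x,y)\,j(\sigma(g),y)$, your application with $g'=\sigma(g)$ produces the last factor $j(\sigma(g'),z)=j(g,z)$ rather than $j(\sigma(g),\sigma(z))$, the residual Jacobian factor becomes $j(g,z)^{-\frac12+\lambda}\,j(g,z)^{\frac12-\lambda}=1$ identically, and your computation closes with no further input. So your proof is correct once the covariance hypothesis is stated correctly; as written, it rests on an ill-defined and unjustifiable identity.
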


\begin{proof} For $k\in K$, \eqref {covtildec} implies $\widetilde c(k(x), k(y)) = c(x,y)$ for any $x,y\in X$ and any $k\in K$. Hence $J_\lambda$ is a convolution operator on $X$, and the condition \eqref{convergence} guarantees that the operator $J_\lambda$ is a convolution operator with an integrable function on $X$. Hence $J_\lambda$ is a continuous operator on $\mathcal C^\infty(X)$. 

Let $f\in \mathcal C^\infty(S)$. Then 
\[ J_\lambda \big(\rho_\lambda(g) f\big)(x) = \int_X \widetilde c(x,y)^{-\frac{1}{2}+\lambda} j(g^{-1},y)^{\frac{1}{2}+\lambda} f(g^{-1}(y))d\sigma (y)\]
\[=\int_X \widetilde c(x,g(z))^{-\frac{1}{2}+\lambda}j(g,z)^{-\frac{1}{2}-\lambda} f(z) j(g,z) d\sigma(z)
\]
\[= j\big(\sigma(g),\sigma(g^{-1})(x)\big)^{-\frac{1}{2}+\lambda}\int_X \widetilde c(\sigma(g^{-1})(x), z)^{-\frac{1}{2}+\lambda}f(z) d\sigma(z)
\]
\[ =j\big(\sigma(g)^{-1},x\big)^{\frac{1}{2}-\lambda}I_\lambda f(g^{-1}(x))= \rho^\sigma_{-\lambda}(g)\big(J_\lambda f\big)(x)\ ,\]
where we used first the change of variable $y=g(z)$ and next, the covariance property  \eqref{covtildec} of the kernel $\widetilde c$.
\end{proof}

Three main questions are addressed in the present paper

$\bullet$ construct, as explicitely as possible a kernel $\widetilde c$, and its analogue in the noncompact picture

$\bullet$ determine the domain of convergence of the intertwining integral \eqref{convergence}

$\bullet$ prove the meromorphic continuation in the parameter $\lambda$ to $\mathbb C$ and determine the location of the poles.
 
\section{ Symmetric $R$-spaces and positive Jordan triple systems}

The main references for this section are \cite{l4} and \cite{l2}. For more developments on Jordan triple systems and Jordan pairs, see \cite{l3}. For the theory of compact Riemannian symmetric spaces, see \cite{hel}.

Let $X=G/P$ be a symmetric $R$-space. Keep notation introduced in the beginning of the previous section. The involution $\theta$ can be extended to an involution of $G$ (still denoted by $\theta$) and commuting with $\sigma$. Let
\[\mathfrak g = \Lie(G), \quad \mathfrak p = \Lie(P), \quad \mathfrak k =\Lie(K), \quad \mathfrak k^o = \Lie (K^o)\ .
\]
Denote by $\sigma$ and $\theta$ the associated involution of the Lie algebra $\mathfrak g$.
 
Let $V=T_oX$ be the tangent space of $X$ at $o$. The space $V$ carries two important algebraic structures : a structure of \emph{ Lie triple system}  (as does the tangent space to any Riemannian symmetric space) and a structure of \emph{positive Jordan triple system}. Let
\[\mathfrak k = \mathfrak k^o \oplus \mathfrak s\] be the eigenspace decomposition of the restriction of $\theta$ to $\mathfrak k$. Regarding $\mathfrak k$ as a Lie algebra of vector fields on $X$, the map $\xi\longmapsto \xi(o)$ is an isomorphism of vector spaces from $\mathfrak s$ onto $V$. For $v\in V$, let $\widetilde v$ be the unique vector field in $\mathfrak s$ such that $\widetilde v (o) = v$. Then $V$ is turned into a \emph{Lie triple system} (LTS for short) by
\[[u,v,w] = \big[\,[\widetilde u, \widetilde v], \widetilde w\big] (o)\ .
\]
Let $P^-=\sigma(P)$ be the opposite parabolic subgroup of $P$. Then $H=P^-\cap P^+$ is a Levi subgroup of $P$. The unipotent radical $N^+$ (resp. $N^-$) of $P$ (resp. of $P^-$) is Abelian, and  $N^- = \sigma(N^+)$. Let $\mathfrak h, \mathfrak n_+, \mathfrak n_-$ be the liea algebras respectively of $H, N^+, N^-$. There is a corresponding decomposition of the Lie algebra $\mathfrak g$ as
\[\mathfrak g = \mathfrak n_{-}\oplus \mathfrak h \oplus \mathfrak n_+
=\mathfrak n_{-}\oplus \mathfrak p\ .\]
Again regarding $\mathfrak g$ as a Lie algebra of vector fields on $X$, the map $\xi\longmapsto \xi(o)$ is an isomorphism of $\mathfrak n^-$ onto $V$. For $v\in V$, denote by $\widehat v$ the unique vector field in $\mathfrak n^-$ such that $\widehat v(o)=v$. Then $V$ is turned into a \emph{ positive Jordan triple system} (PJTS for short) by letting
\[\{u,v,w\} = -\frac{1}{2}\big[\,[\widehat u,\widehat v],\widehat w\big](o)\ .
\]

The triple product satisfies the fundamental identities for a Jordan  triple system 
\begin{equation}\label{PJTS1}
\{x,y,z\} = \{z,y,x\}
\end{equation}
\begin{equation}\label{PJTS2}
\{a,b,\{x,y,z\}\} = \{\{a,b,x\}, y,z\} -\{ x, \{b,a,y\},z\} + \{x,y, \{a,b,z\}\}
\end{equation}
for all $a,b,x,y,z\in V$.  Set 
\[ L(x,y)z=\{x,y,z\}, \quad Q(x)y = \{x,y,x\}\ .
\]
Define the \emph{trace form} of $V$  by
\[ (x,y) = \tr\big(L(x,y)\big)\ .\]
The trace form can be verified to be symmetric and positive definite, giving $V$ a structure of \emph{positive} Jordan triple system. In turn, $V$ becomes equipped with a Euclidean inner product. 

An important link between the two trilinear structures on $V$ is the identity
\begin{equation}\label{tritri}
[ u, v, w] = 2(-\{u,v,w\} + \{v,u,w\})\ .
\end{equation}
For a general discussion of the geometric significance of the relation between $LTS$ and $JTS$ structures, see \cite{b}.

Let $V$ be a PJTS. A linear operator $g$ on $V$ belongs to the \emph{structure group} $Str(V)$ if, for all $x,y,z$ in $V$,
\[ g\{x,y,z\} = \{ gx, {(g^{t})}^{-1} y, g z\}\ .\]
The structure group is a closed (hence Lie) subgroup of $GL(V)$. The Lie algebra $\mathfrak {str} (V)$ of $Str(V)$ is generated by the endomorphisms $L(u,v), u, v\in V$. Dilations belong to $Str(V)$, so that  $\id_V$ is in $\mathfrak {str}(V)$. Let  $Z=-\id_V$. Moreover, $L(u,v)^t = L(v,u)$ for any $u,v\in V$. This implies that $Str(V)$ is reductive in $GL(V)$.

A linear operator $k$ on $V$ is an \emph{automorphism} of $V$
 if, for all $x,y,z$ in $V$,
 \[ k\{x,y,z\} = \{ kx,ky,kz\}\ .\]
 The automorphisms of $V$ form a Lie group, denoted by $Aut(V)$ and $Aut(V) = Str(V)\cap O(V)$ . The Lie algebra of $\mathfrak{aut}(V)$ is characterized as $\{ X\in \mathfrak{str}(V), X^t=-X\}$, and it is generated by the endomorphisms $\big(L(u,v)-L(v,u)\big), u,v\in V$.

Conversely, it is possible to reconstruct the symmetric $R$-space from the PJTS structure on $V$. The process goes  first through the  \emph{Koecher-Kantor-Tits construction}.

\begin{proposition} Let $(V, \{\,.\,,\,.\,,\,.\,\})$ be a PJTS. 
Let \[\mathfrak g= \{(a,T,b), a\in V,  T\in \mathfrak{str}(V), b\in  V\}\]
and define the bracket of two elements $X=(a,T,b)$ and $X'=(a',T',b')$ by
\[
 [X, X'] = \big(Ta'-T'a, 2L(a',b)+[T,T'] -2L(a,b'), T'^t-T^t b'\big)\ .
\]

$i)$ $\mathfrak g$ is a  semi-simple Lie  algebra.

$ii)$ let
\[ \mathfrak n_{-} = \{ (a,0,0), a\in V\}\simeq V\]
\[  \mathfrak h =\{(0,T,0), T\in   \mathfrak {str}(V)\} \simeq \mathfrak {str}(V)\]
\[ \mathfrak n_+ = \{(0,0,b), b \}\simeq V\ . \]
Then $\mathfrak g= \mathfrak n^{-}\oplus \mathfrak h\oplus \mathfrak n^+$ is a $3$-graded Lie algebra, and the gradation is given by $\ad Z$.

$iii)$ the  map
\[ \sigma : (a,T,b)\longmapsto (b, -T^t, a) \]
is a Cartan involution of $\mathfrak g$.

$iv)$ the map
\[ \theta : (a,T,b)\longmapsto (-a,T,-b)\]
is an involution of $\mathfrak g$ which commutes with $\sigma$.

$v)$  identifying $(v,0,0)$ with $v$ for $v\in V$,
\[\forall \,a,b,c \in V,\qquad \{a,b,c\} = -\frac{1}{2}\big[\, [a,\sigma(b)],c\big] \ .
\]
\end{proposition}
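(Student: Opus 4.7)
The plan is to handle the structural parts (ii), (v), (iii), (iv) by direct computation, prove the Jacobi identity (the main step) by a case analysis on the grading of the three inputs, and then extract semisimplicity (i) together with the Cartan property of $\sigma$ from a positive definite $\sigma$-invariant bilinear form.

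For (ii), setting $Z=(0,-\id_V,0)$, the bracket formula gives $[Z,(a,0,0)]=(-a,0,0)$, $[Z,(0,T,0)]=0$, and $[Z,(0,0,b)]=(0,0,b)$, so $\mathfrak n_-,\mathfrak h,\mathfrak n_+$ are the $-1,0,+1$ eigenspaces of $\ad Z$. For (v), identifying $a,b,c\in V$ with elements of $\mathfrak n_-$, we have $\sigma(b)=(0,0,b)$, then $[a,\sigma(b)]=(0,-2L(a,b),0)$ and $[[a,\sigma(b)],c]=(-2L(a,b)c,0,0)=-2\{a,b,c\}$ under the identification, which is (v). For (iii) and (iv), checking that $\sigma$ and $\theta$ preserve the bracket is a short symbolic computation using $L(u,v)^t=L(v,u)$ and the fact that $-T^t\in\mathfrak{str}(V)$ whenever $T\in\mathfrak{str}(V)$; commutativity $\sigma\theta=\theta\sigma$ is immediate from the definitions.

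The central step is to show that the formula defines a Lie bracket. Antisymmetry is visible. For the Jacobi identity, I split by the grading of the three inputs. Triples with all three inputs in $\mathfrak n_-$ or all three in $\mathfrak n_+$ are trivial since $[\mathfrak n_\pm,\mathfrak n_\pm]=0$. Triples with two inputs in $\mathfrak n_-$ (resp.\ $\mathfrak n_+$) and one in $\mathfrak n_+$ (resp.\ $\mathfrak n_-$) reduce, after collecting terms, to the symmetry (PJTS1). Triples involving two inputs in $\mathfrak h$ reduce to the classical Jacobi identity for commutators of endomorphisms. The remaining mixed case $(\mathfrak n_-,\mathfrak h,\mathfrak n_+)$ reduces, after cancellation of the endomorphism Jacobi contribution, to the structural identity $[T,L(u,v)]=L(Tu,v)-L(u,T^tv)$ for $T\in\mathfrak{str}(V)$; this identity is the infinitesimal form of the definition of $Str(V)$ and, when specialised to $T=L(a,b)\in\mathfrak{str}(V)$, is exactly (PJTS2). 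The substantive PJTS content enters precisely here, and this case analysis is the main obstacle.

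For (i) together with the Cartan property of $\sigma$ in (iii), I would introduce on $\mathfrak g$ the symmetric bilinear form
\[
\langle(a,T,b),(a',T',b')\rangle=(a,a')+\tr(T{T'}^t)+(b,b'),
\]
using the positive-definite trace form on $V$ and the Hilbert--Schmidt form on $\End(V)$. A direct calculation, again invoking $L(u,v)^t=L(v,u)$ and the structural identity above, yields the $\sigma$-associativity $\langle[X,Y],Z\rangle+\langle Y,[\sigma X,Z]\rangle=0$. This form is manifestly positive definite and nondegenerate, so $\ad$ is faithful; combined with $\sigma$-associativity this forces the Killing form $B$ to be nondegenerate, whence $\mathfrak g$ is semisimple, and simultaneously shows that $-B(X,\sigma X)>0$ for $X\neq0$, so $\sigma$ is a Cartan involution.
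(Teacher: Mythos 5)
The paper itself gives no proof of this proposition (it is the Koecher--Kantor--Tits construction, quoted from Loos \cite{l2}, \cite{l4}), so your attempt has to be judged on its own terms. The computational parts are sound: the eigenvalue computation for $\ad Z$ in (ii), the identity $[a,\sigma(b)]=(0,-2L(a,b),0)$ giving (v), and the bracket-preservation checks for $\sigma$ and $\theta$ are all correct. Your organisation of the Jacobi identity is also essentially right: the only substantive cases are $(\mathfrak n_-,\mathfrak n_-,\mathfrak n_+)$ and its mirror, which reduce to \eqref{PJTS1}, and $(\mathfrak n_-,\mathfrak h,\mathfrak n_+)$, which reduces to $[T,L(u,v)]=L(Tu,v)-L(u,T^tv)$ for $T\in\mathfrak{str}(V)$ --- correctly identified as the infinitesimal form of the definition of $Str(V)$, specialising to \eqref{PJTS2} for $T=L(a,b)$. (Your enumeration omits the triples with two entries in $\mathfrak n_\pm$ and one in $\mathfrak h$, but these vanish for degree reasons since every iterated bracket lands in $\mathfrak g_{\pm 2}=0$; not a real gap.)

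The genuine gap is in the final paragraph. The form $\langle(a,T,b),(a',T',b')\rangle=(a,a')+\tr(TT'^t)+(b,b')$ is \emph{not} $\sigma$-associative. Testing $X=(a,0,0)$, $Y=(0,T',0)$, $W=(a'',0,0)$, the identity $\langle[X,Y],W\rangle+\langle Y,[\sigma X,W]\rangle=0$ requires $\tr L(T'a,a'')=2\,\tr\bigl(T'L(a,a'')\bigr)$ for all $T'\in\mathfrak{str}(V)$; already for $V=\mathbb R$, $\{x,y,z\}=xyz$ (so $\mathfrak g\simeq\mathfrak{sl}(2,\mathbb R)$) the two sides differ by a factor $2$. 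Rescaling the three blocks does not rescue it in general: for $V=\mathbb R^n$ with the quadric triple product and $n\geq 3$, testing $T'=\id$ forces $\alpha=2\beta$ while testing $T'\in\mathfrak{so}(n)$ forces $\alpha n=4\beta$, which is inconsistent --- the Hilbert--Schmidt form is simply not the invariant form on $\mathfrak{str}(V)$ that the bracket demands (that form is pinned down by $\langle T',L(u,v)\rangle_0=\tfrac12\,(T'v,u)$, and its well-definedness, symmetry and positivity would themselves need proof). A second, independent flaw: ``the form is positive definite, so $\ad$ is faithful'' is a non sequitur (an abelian Lie algebra carries positive definite invariant forms); you must check $Z(\mathfrak g)=0$ directly, which is easy --- $[(a,T,b),(a',0,0)]=(Ta',2L(a',b),0)$ forces $T=0$ and $L(a',b)=0$ for all $a'$, whence $(b,b)=\tr L(b,b)=0$ and $b=0$ by positivity of the trace form, and symmetrically $a=0$. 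To repair part (i) and the Cartan property, either compute the Killing form on the graded pieces directly (using $B(\mathfrak g_i,\mathfrak g_j)=0$ for $i+j\neq0$) and verify $-B(X,\sigma X)>0$, or carry out the inner-product strategy with the correct $\mathfrak g_0$-block; as written, the argument does not go through.
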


Let 
\[ \mathfrak p = \mathfrak h \oplus \mathfrak n^+, \mathfrak k = \{ X\in \mathfrak g, \sigma X = X\}\ .\]
 Then $\mathfrak k$ splits under the action of $\theta$ as \[\mathfrak k = \mathfrak k^o\oplus \mathfrak s\] where
$\mathfrak k^o \simeq  \{ T\in \mathfrak {str}(V), T^t =-T\}= \mathfrak{aut}(V)$ and $\mathfrak s = \{ (a,0,a), a\in V\} \simeq V$.

The second step is to integrate these infinitesimal results. Let $G$ be the adjoint group of $\mathfrak g$, and denote by $\theta$ and $\sigma$ the involutions of $G$ corresponding to the  involutions of $\mathfrak g$. Let 

$\bullet $ $K=G^\sigma$  maximal compact subgroup (hence connected), 

$\bullet$ $P$ the normalizer of $\mathfrak p$

$\bullet$ $N^-=\exp(\mathfrak n_-)$  Abelian subgroup $\simeq \mathbb R^n$

$\bullet$ $H=G^\theta$ reductive subgroup with Lie algebra $\mathfrak h$

$\bullet$ $K^o = K\cap P$

Then $K^\theta_0\subset K^o\subset K^\theta$. Finally define $X$ as $G/P$. Then $X\simeq K/K^o$ is a symmetric $R$-space, associated to the PJTS $V$.

\begin{theorem} The map $V\longmapsto X$ establishes a one-to-one correspondance between (isomorphism classes of) PJTS and symmetric $R$-spaces.
\end{theorem}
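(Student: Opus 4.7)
The strategy is to verify that the two constructions presented so far---namely the assignment $X \mapsto V = T_oX$ equipped with its PJTS triple product and the Koecher-Kantor-Tits (KKT) procedure $V \mapsto (\mathfrak g,\sigma,\theta) \mapsto (G,P) \mapsto X=G/P$ of Proposition 2.1 combined with the integration step just above the theorem---are mutually inverse on isomorphism classes. Well-definedness on isomorphism classes is routine: an isomorphism of PJTS extends by the KKT formula to an isomorphism of $3$-graded Lie algebras intertwining $\sigma$ and $\theta$, hence to an equivariant diffeomorphism of the resulting $R$-spaces; conversely an equivariant diffeomorphism of symmetric $R$-spaces induces, via its differential at the origin, an isomorphism of the tangent PJTSs.

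\emph{KKT followed by tangent PJTS recovers $V$.} Starting from a PJTS $V$, apply KKT to build $\mathfrak g$, pass to the adjoint group $G$, and form $X=G/P$. The identification $\mathfrak n_-\simeq V$ from Proposition 2.1(ii) composed with the vector-field evaluation $\xi\mapsto\xi(o)$ gives $T_oX\simeq V$ as a vector space, and realises $\widehat v$ as the element $(v,0,0)$ of $\mathfrak n_-$. Expanding the defining bracket formula for the PJTS structure of $T_oX$ in terms of the abstract Lie bracket of $\mathfrak g$ (using the fact that $\mathfrak n_-$ is abelian, so the only non-vanishing contribution goes through $\mathfrak n_+ = \sigma(\mathfrak n_-)$) reduces the computation to exactly the identity of Proposition 2.1(v), which returns the original triple product on $V$.

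\emph{Tangent PJTS followed by KKT recovers $X$.} Starting from a symmetric $R$-space $X=G/P$, take $V=T_oX$ and apply KKT to obtain a second $3$-graded Lie algebra $\mathfrak g'=\mathfrak n'_-\oplus\mathfrak h'\oplus\mathfrak n'_+$ with involutions $\sigma',\theta'$. Define $\phi:\mathfrak g'\to\mathfrak g$ on the outer pieces by $\phi(a,0,0)=\widehat a$ and $\phi(0,0,b)=\sigma(\widehat b)$, and on $\mathfrak h'\simeq\mathfrak{str}(V)$ by sending $L(u,v)\in\mathfrak{str}(V)$ to the element of $\mathfrak h$ realising the adjoint action of $[\widehat u,\sigma(\widehat v)]$ on $\mathfrak n_-$. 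The identity $\{u,v,w\} = -\tfrac12[[\widehat u,\sigma(\widehat v)],\widehat w](o)$, together with the explicit bracket formula of Proposition 2.1, then makes $\phi$ a graded Lie-algebra homomorphism intertwining both involutions. Since $\mathfrak g$ and $\mathfrak g'$ are semisimple of equal dimension, $\phi$ is an isomorphism, and passage to adjoint groups transports $P'$ to $P$ and yields the desired isomorphism of $R$-spaces.

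The principal obstacle is the $0$-piece of the correspondence: showing that the adjoint action of $\mathfrak h$ on $\mathfrak n_-\simeq V$ identifies $\mathfrak h$ with the full structure algebra $\mathfrak{str}(V)$. The inclusion $\mathrm{ad}_{\mathfrak n_-}\,\mathfrak h\subset\mathfrak{str}(V)$ follows from the compatibility of $\sigma$ with the grading together with the Jordan identity \eqref{PJTS2}. Surjectivity relies on $\mathfrak h = [\mathfrak n_-,\mathfrak n_+]\oplus\mathbb R Z$, which is built into the fact that a symmetric $R$-space corresponds to a parabolic with abelian nilradical whose $3$-gradation is generated by $\ad Z$; this identifies every $L(u,v)\in\mathfrak{str}(V)$ with the action of $[\widehat u,\sigma(\widehat v)]$. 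A secondary point to be checked is that the JTS constructed from any symmetric $R$-space is automatically \emph{positive}: this uses the $K$-invariant Riemannian metric on $X$, under which $\mathfrak k^o$ acts on $V$ by skew operators, forcing the trace form $(x,y)=\tr L(x,y)$ to be positive definite.
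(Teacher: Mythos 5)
The paper offers no proof of this theorem: it is quoted from Loos (\cite{l2}, \cite{l4}), and the text only sets up the two constructions ($X\mapsto T_oX$ with its triple product, and the Koecher--Kantor--Tits functor). Your proposal is therefore a reconstruction of the standard argument rather than a variant of the paper's, and its architecture is the right one: show the two functors are mutually inverse, with the real content concentrated in (a) identifying the Levi factor $\mathfrak h$ with $\mathfrak{str}(V)$ acting on $\mathfrak n_-\simeq V$, and (b) proving positivity of the trace form. Point (a) you handle correctly: $\mathfrak n_-\oplus[\mathfrak n_-,\mathfrak n_+]\oplus\mathfrak n_+$ is an ideal of the semisimple algebra $\mathfrak g$, whence $\mathfrak h=[\mathfrak n_-,\mathfrak n_+]$ (granting effectivity of the action), and faithfulness of $\mathfrak h$ on $\mathfrak n_-$ follows from the Killing-form pairing of $\mathfrak n_\pm$.

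Point (b) contains a genuine gap. You assert that skewness of $\mathfrak k^o$ on $V$ with respect to a $K$-invariant metric "forces" $\tr L(x,y)$ to be positive definite. This cannot work: replacing the triple product $\{\cdot,\cdot,\cdot\}$ by its negative yields another Jordan triple system with the \emph{same} automorphism algebra (the Jordan identities are quadratic in the product), hence the same skewness statement, but with trace form of the opposite sign. So no information about the sign of $\tr L(x,x)$ can be extracted from the skew action of $\mathfrak k^o$ alone. The positivity is precisely where the hypothesis that $\sigma$ is a \emph{Cartan} involution enters: $B_\sigma(X,Y)=-B(X,\sigma Y)$ is positive definite on $\mathfrak g$, and a block-by-block computation of $\tr(\ad\widehat x\,\ad\sigma\widehat y)$ along the grading (using $\ad([\widehat x,\sigma\widehat y])\vert_{\mathfrak n_-}=-2L(x,y)$) shows that $B(\widehat x,\sigma\widehat y)$ is a negative multiple of $\tr L(x,y)$, giving positive definiteness. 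Two smaller items you lean on without proof, though the paper does assert them before the theorem: that the nilradical of $P$ for a symmetric $R$-space is abelian (this is what produces the $3$-grading by $\ad Z$ and is itself a nontrivial consequence of the existence of $\theta$), and that the KKT output satisfies $K^\theta_0\subset K^o\subset K^\theta$, i.e.\ really is a \emph{symmetric} $R$-space.
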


See \cite {l2}, \cite{l4}. Let us mention that there is also a one-to-one correspondance with the $3$-graded real semisimple Lie algebras (see e.g. \cite{br}).

For $v$ in $V$, let $\overline n_v = \exp(v,0,0)\in N^-$. Then the  \[\kappa : V \longrightarrow X,\quad \kappa(v) =\overline n_v (o)\]
is a diffeomorphism on an open dense subset $\mathcal O$ of $X$. Hence we can transfer the action of $G$ on $X$ to a (not everywhere defined) action of $G$ on $V$ by
\[g(x) = \kappa^{-1}\big(g(\kappa(x) \big)\ .\]

It turns out to be a \emph{rational} action. By differentiation, there is a corresponding action of the Lie algebra $\mathfrak g$ by differential operators on $V$,  explicitly given by
\[(a,T,b) \longmapsto X (x) = (a+Tx+Q(x)b)\ \frac{\partial}{\partial x}
\]
By integration, it is possible, to a large extent, to find explicit expressions for the action of elements of $G$ on $V$ (see \cite{l4}). 

There is a spectral theory for PJTS, which will be used frequently. An element $c$ of $V$ is said to be a \emph{tripotent} if $\{c,c,c\} = c$. Then $L(c,c)$ is a symmetric operator on $V$, and its eigenvalues belong to $\{0, \frac{1}{2}, 1\}$, so that, with obvious notation \[V= V(c,1)\oplus V(c, \frac{1}{2}) \oplus V(c,0)\ ,\]
the so-called called  the \emph{ Peirce decomposition} of $V$ w.r.t. $c$.  When the tripotent $c$ is fixed it will be convenient to set
 \[V_2 = V(c,1), \quad V_1 = V(c,\frac{1}{2}),\quad V_0 = V(c,0)\ .\]

 The following property will be used frequently : for $i,j,k\in \{0,1,2\}$,
 \[\{ V_i,V_j,V_k\}\subset V_{i-j+k}\ ,
 \]
 where $V_\ell$ is assumed to be $0$ if $\ell \notin \{0,1,2\}$.

 The map $Q(c)$ is $0$ on $V(c,\frac{1}{2})\oplus V(c,0)$ and induces an involution on $V(c,1)$. Hence, the latter decomposes as
 \[V(c,1) = V^+(c,1)\oplus V^-(c,1)\ .\]

 Two tripotents $c,d$ are \emph{orthogonal} if $L(c,d) = 0$. If this is the case, then $L(c,c)$ and $L(d,d)$ commute, and $c+d$ is a tripotent.  
  
 A subspace $A$ of $V$ is said to be \emph{flat} if
\[\{A,A,A\} \subset A,\quad \{x,y,z\} = \{ y,x,z\}, \ \forall x,y,z\in A\ .
\]
If $(c_1,c_2,\dots, c_k)$ is a family of mutually orthogonal tripotents, then $A=\mathbb R c_1\oplus \mathbb R c_2\oplus \mathbb R c_k$ is a flat. 
 
There is a (partial) order on the tripotents : $c$ and $d$ being two tripotents, $c$ is smaller than $d$ if $d=c+e$, where $e$ is a tripotent, and $c$ and $e$ are orthogonal. A tripotent which is minimal ($0$ being removed) for this order is said to be \emph{primitive}. A  tripotent $c$ is primitive if and only if $V^+(c,1)=\mathbb R c$. A tripotent is said to be \emph{maximal} if it is maximal for this order. A tripotent $c$ is maximal if and only if $V(c,0)=\{0\}$.
 
 A \emph{Jordan frame} is a maximal set of mutually orthogonal primitive tripotents. Let $(c_1,c_2,\dots, c_r)$ be a Jordan frame.  The operators $L(c_j,c_j), 1\leq j\leq r$ are symmetric and commute to each other, so have a simultaneous decomposition in eigenspaces. More precisely, set
\[\begin{matrix} V_{jj}&& =&V(c_j,1) &j=1,\dots, r\\
V_{ij}&=V_{ji}& = &V(c_i,\frac{1}{2})\cap V(c_j,\frac{1}{2})&1\leq i<j\leq r\\
V_{j0}&=V_{0j}& =& V(c_j,\frac{1}{2})\cap \big( \cap_{k\neq j} V(c_k,0)\big)&j=1,\dots, r\ .
\end{matrix}
\]
Then the following \emph{Peirce decomposition} holds
\[V=\bigoplus _{0\leq i\leq j\leq r} V_{ij}\ .\]

The Peirce decomposition satisfies

\[\{ V_{ij}, V_{jk}, V_{kl}\} \subset V_{il}\ .
\]
and all other brackets are $0$.

Let $c=c_1+c_2+\dots c_r$. Then $c$ is a maximal tripotent, and the spectral decomposition of $V$ with respect to $c$ now reads
\[ V(c,1) = \bigoplus_{1\leq i\leq j\leq r} V_{ij},\quad V(c,\frac{1}{2}) = \bigoplus_{j=1}^r V_{oj},\quad V(c,0) = \{ 0\}\ .
\] 
The involution $Q(c)$ induces involutions on each $V_{ij}, 1\leq i\leq j \leq r$, giving the corresponding decompositions
\[V_{jj} = \mathbb R c_j \oplus V_{jj}^-, \quad V_{ij} = V^+_{ij} \oplus V^-_{ij}
\]

If $(c_1,c_2,\dots, c_r)$ is a Jordan frame, then the space $\mathbb R c_1\oplus \mathbb R c_2\oplus \dots\oplus \mathbb R c_r$ is a maximal flat of $V$. Conversely, given a maximal flat subspace $A$ of $V$, there exists a Jordan frame $(c_1,c_2,\dots, c_r)$ such that 
\[ A=\mathbb R c_1\oplus\mathbb R c_2\oplus \dots \oplus\mathbb R c_r\ .\]

 \begin{proposition} Two maximal flat spaces of $V$ are conjugate by some automorphism of $V$. Two Jordan frames of $V$ are conjugate up to order and signs.
 \end{proposition}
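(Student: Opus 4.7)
The plan is to reduce the first assertion to the second and attack the second via a variational argument using the compactness of $Aut(V)$. The main technical input is the \emph{spectral theorem} for a PJTS: every $x\in V$ admits a decomposition $x=\sum_{j=1}^{k}\lambda_{j}c_{j}$ with pairwise orthogonal tripotents $c_{j}$ and distinct positive reals $\lambda_{j}$, where each $c_{j}$ is a polynomial expression in $x$. This follows from the positive-definiteness of the trace form applied to the commutative algebra generated by the symmetric operator $L(x,x)$ on the Jordan subtriple generated by $x$.

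First I would show that any maximal flat $A$ is spanned by a Jordan frame. Pick $x\in A$ whose spectral decomposition $x=\sum\lambda_{j}c_{j}$ has the largest possible number $k$ of summands. Since each $c_{j}$ is a polynomial in $x$ and flats are closed under the triple product, every $c_{j}$ lies in $A$, so $\bigoplus\mathbb R c_{j}\subset A$. Primitivity of the $c_{j}$ follows from the choice of $x$: a non-primitive $c_{j}$ would split further into orthogonal tripotents and enlarge $k$. Next, $k=r$ holds because otherwise one could extract an additional orthogonal primitive tripotent from the non-trivial sub-PJTS $\bigcap_{j}V(c_{j},0)$, enlarging the flat and contradicting maximality of $A$. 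The same maximality argument then forces $A=\bigoplus\mathbb R c_{j}$.

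To prove conjugacy of two Jordan frames $(c_{1},\dots,c_{r})$ and $(c'_{1},\dots,c'_{r})$, I would use that $Aut(V)=Str(V)\cap O(V)$ is compact. Consider the continuous function
\[
\Phi(k)=\sum_{j=1}^{r}(kc_{j},c'_{j}),\qquad k\in Aut(V),
\]
where $(\cdot,\cdot)$ is the trace form, and let $k_{0}$ be a maximizer. Differentiating $\Phi$ at $k_{0}$ along the one-parameter subgroups generated by the skew operators $L(u,v)-L(v,u)\in\mathfrak{aut}(V)$ produces first-order vanishing identities which, combined with the Peirce inclusions $\{V_{ij},V_{jk},V_{kl}\}\subset V_{il}$ and the orthogonality $L(c'_{i},c'_{j})=0$ for $i\neq j$, force each $k_{0}c_{j}$ into a single Peirce block of the frame $(c'_{i})$. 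Since $k_{0}c_{j}$ is primitive and $V^{+}(c'_{\pi(j)},1)=\mathbb R c'_{\pi(j)}$, this yields $k_{0}c_{j}=\pm c'_{\pi(j)}$ for a permutation $\pi$.

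The first assertion is then immediate: both maximal flats are spanned by Jordan frames, and under a conjugation realizing the second assertion the spans coincide. The main obstacle is the variational step in the previous paragraph: extracting the combinatorial conclusion that $k_{0}$ permutes the frame up to signs from the first-order vanishing conditions requires careful bookkeeping with the Peirce decomposition and the fundamental identity \eqref{PJTS2}, in order to rule out the a priori possibility that $k_{0}c_{j}$ has nonzero components in several Peirce blocks of $(c'_{i})$.
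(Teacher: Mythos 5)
The paper does not prove this proposition (it is quoted from Loos, \cite{l2}, \cite{l4}), so the comparison is with the standard argument, which runs through the Lie triple system: via \eqref{tritri} a maximal flat of $V$ is exactly a Cartan subspace of $\mathfrak s$, two Cartan subspaces of a compact symmetric space are conjugate under $K^\theta_0\subset Aut(V)$ by the classical variational argument on \emph{regular elements}, and two Jordan frames spanning the \emph{same} flat are related by a signed permutation by a short direct computation (a tripotent $\sum t_jc_j$ forces $t_j^3=t_j$, and primitivity forces exactly one $t_j\neq 0$). Your proposal inverts this order: you attack frame conjugacy directly and deduce flat conjugacy from it.

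The genuine gap is the one you flag yourself: the claim that a maximizer $k_0$ of $\Phi(k)=\sum_j(kc_j,c'_j)$ satisfies $k_0c_j=\pm c'_{\pi(j)}$ is asserted, not proved, and it is the entire content of the second statement. The first-order condition at $k_0$ only gives $\sum_j\bigl(Xk_0c_j,c'_j\bigr)=0$ for $X\in\mathfrak{aut}(V)$, i.e.\ one scalar relation per generator $L(u,v)-L(v,u)$, summed over $j$; unlike the classical Cartan argument this does not linearize to a single bracket relation of the form $[\widetilde{k_0x},\widetilde{x'}]=0$, and extracting the combinatorial conclusion from these summed relations is precisely where all the difficulty sits. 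A proof that defers its central step to "careful bookkeeping" is not a proof. The repair is to run the variational argument on regular elements instead: choose $x=\sum\lambda_jc_j$ and $x'=\sum\lambda'_jc'_j$ with generic coefficients, maximize $k\mapsto(kx,x')$; criticality then gives $\bigl([X,\widetilde{k_0x}],\widetilde{x'}\bigr)=0$ for all $X\in\mathfrak k^o$, hence $[\widetilde{k_0x},\widetilde{x'}]=0$ by invariance and nondegeneracy of the form, hence $k_0A=A'$ by regularity; the frame statement then follows from the signed-permutation lemma above. Two smaller points: (i) your primitivity argument for the $c_j$ spanning a maximal flat $A$ is off as stated — splitting $c_j=d+e$ produces an element with more spectral idempotents, but that element need not lie in $A$, so it does not contradict your choice of $x$; the correct contradiction is that $\mathbb R d\oplus\mathbb R e\oplus\bigoplus_{i\neq j}\mathbb R c_i$ is a flat strictly containing $A$ (one must check $L(d,c_i)=0$ for $i\neq j$ via the Peirce rules), contradicting maximality of $A$; (ii) the reduction of the first assertion to the second is fine once the second is established.
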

 
 The number of elements in a Jordan frame is called the \emph{rank} of $V$.
 
An \emph{ideal} of $V$ is a subspace $W$ such that $\{W,V,V\}\subset W$ and $\{V,W,V\}\subset W$. The PJTS $V$ is said to be \emph{simple} if it has no non trivial ideals. A PJTS can be decomposed as a sum of simple ideals in a unique way (up to order).

\begin{proposition} Let $V$ is a \emph{simple} PJTS of rank $r$. Let $(c_1,c_2,\dots, c_r)$ be a Jordan frame, and let $V=\bigoplus _{0\leq i\leq j\leq r} V_{ij}$ be the corresponding Peirce decomposition. Then

$\bullet$ for $1\leq i<j\leq r$, the dimension of $ V_{ij}$ (resp. $V_{ij}^+, V_{ij}^-)$ does not depend on $(i,j)$

$\bullet$ for $1\leq i\leq r$, the dimension of $V_{io}$ does not depend on $i$

$\bullet$ for $1\leq i\leq r$, the dimension of $V_{ii}$ does not depend on $i$
\end{proposition}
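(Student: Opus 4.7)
The plan is to derive the proposition from the preceding conjugacy statement for Jordan frames, by showing that a large enough group of automorphisms acts transitively on the relevant indices and that the Peirce pieces transform accordingly. First, I would extract, for every permutation $\pi\in S_r$, an automorphism $k_\pi\in\Aut(V)$ and signs $\epsilon_i^\pi\in\{\pm 1\}$ satisfying $k_\pi(c_i)=\epsilon_i^\pi\,c_{\pi(i)}$ for $1\le i\le r$, by applying the preceding proposition to the two ordered frames $(c_1,\dots,c_r)$ and $(c_{\pi^{-1}(1)},\dots,c_{\pi^{-1}(r)})$, which represent the same unordered Jordan frame. The simplicity of $V$ enters at this step: it guarantees that \emph{every} $\pi\in S_r$ is realized, rather than only those permutations which respect an ideal decomposition.

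Next, since $k_\pi$ preserves the triple product, one has
\[
k_\pi\circ L(c_i,c_i)=L(k_\pi c_i,k_\pi c_i)\circ k_\pi=L(c_{\pi(i)},c_{\pi(i)})\circ k_\pi,
\]
the sign dropping out because $L$ is bilinear and $(\epsilon_i^\pi)^2=1$. Consequently $k_\pi$ permutes the simultaneous eigenspaces of the mutually commuting family $L(c_1,c_1),\dots,L(c_r,c_r)$; inspecting the eigenvalue patterns that define $V_{ij}$, $V_{i0}$, $V_{ii}$, I conclude
\[
k_\pi(V_{ij})=V_{\pi(i)\pi(j)}\qquad (0\le i\le j\le r),
\]
with the convention $\pi(0)=0$, $V_{ji}=V_{ij}$. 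The transitivity of $S_r$ on singletons $\{i\}\subset\{1,\dots,r\}$ then yields the constancy of $\dim V_{ii}$ and of $\dim V_{i0}$, while transitivity on unordered pairs $\{i,j\}$ with $i\neq j$ yields the constancy of $\dim V_{ij}$.

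For the splitting $V_{ij}=V_{ij}^+\oplus V_{ij}^-$ into $\pm 1$-eigenspaces of the involution induced by $Q(c_i+c_j)$, I would use the automorphism identity $k_\pi\circ Q(u)=Q(k_\pi u)\circ k_\pi$ together with $Q(\epsilon u)=Q(u)$. When $\epsilon_i^\pi=\epsilon_j^\pi$ the sign factors out and $k_\pi$ restricts to an isomorphism $V_{ij}^\pm\to V_{\pi(i)\pi(j)}^\pm$; when the signs disagree, one can either compose $k_\pi$ with a further sign-flipping automorphism (itself supplied by the conjugacy proposition applied to the frame and its variant with $c_j$ replaced by $-c_j$) to restore matching signs, or iterate the argument on two distinct target pairs to pin down which of $V_{\pi(i)\pi(j)}^\pm$ is reached.

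The other three equalities are transparent consequences of the $S_r$-action constructed in the first two steps; the main obstacle is the sign bookkeeping in the last step, i.e.\ ensuring that $V_{ij}^+$ is transported to $V_{\pi(i)\pi(j)}^+$ rather than to $V_{\pi(i)\pi(j)}^-$, which is precisely where one must be careful about how the signs $\epsilon_i^\pi$ and $\epsilon_j^\pi$ combine under the action of $Q(c_i+c_j)$.
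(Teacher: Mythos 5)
The paper states this proposition without proof (it is quoted from the Jordan-theoretic literature and followed immediately by the definition of the characteristic numbers), so your proposal can only be judged on its own merits. Your overall strategy --- transport the Peirce spaces by automorphisms that permute the Jordan frame --- is the standard one, and your steps 2 and 3 (the identities $k\circ L(c_i,c_i)=L(kc_i,kc_i)\circ k$ and $k\circ Q(u)=Q(ku)\circ k$, the insensitivity to signs because $L$ and $Q$ are quadratic in the tripotent, and the resulting permutation of joint eigenspaces) are correct. The problem is step 1, which is where all the content lies. The preceding proposition says that two Jordan frames are conjugate \emph{up to order and signs}; applied to two orderings of one and the same unordered frame, it is satisfied by $k=\id$ with $\tau=\id$, and therefore produces no automorphism realizing a prescribed permutation $\pi$. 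You in fact point out yourself that the two ordered tuples ``represent the same unordered Jordan frame'' --- that is exactly why the proposition gives nothing here. What you actually need is the strictly stronger statement that $\Aut(V)$ acts transitively on \emph{ordered} Jordan frames up to signs. This is true for a simple PJTS, but it does not follow formally from the conjugacy proposition; it requires either the computation of the little Weyl group of the pair $(\mathfrak k,\mathfrak a)$ (which the paper carries out only afterwards, in Section 3, and which shows the Weyl group contains all of $S_r$ in every type $A_{r-1},B_r,C_r,BC_r,D_r$), or a direct Jordan-theoretic construction of automorphisms exchanging $c_i$ and $c_j$ (Peirce rotations built from elements of $V_{ij}$, whose nonvanishing is where simplicity genuinely enters). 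As written, your appeal to simplicity in step 1 is a placeholder rather than an argument.

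A secondary, repairable issue is the sign bookkeeping for $V_{ij}^{\pm}$. Your first proposed fix --- composing with an automorphism that flips the sign of a single $c_j$ --- is not always available: in type $D_r$ (e.g.\ $V=\mathbb R^{p,q}$) the Weyl group only contains sign changes with an even number of minus signs, and correspondingly $a_+\neq a_-$ there, so one cannot afford to confuse $V_{ij}^+$ with $V_{ij}^-$. The statement about $V_{ij}^{\pm}$ is vacuous when $r=2$ (only one pair), and for $r\geq 3$ one can correct a bad sign pattern by composing with an \emph{even} sign change supported on $\{\pi(i),m\}$ for some $m\notin\{\pi(i),\pi(j)\}$; your second alternative (``iterate the argument on two distinct target pairs'') gestures at this but would need to be made precise.
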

Consequently set
\[ a= \dim V_{ij},\quad a_+ = \dim V_{ij}^+,\quad a_- = \dim V_{ij}^-,\quad b = \dim V_{oi}, \quad c = \dim V_{ii}\ .
\]

The numbers $a,b,c$ are called the \emph{characteristic numbers} of $V$. 

Observe that $a=0$ if the rank of $V$ is $1$. Otherwise, $a$ is different from $0$. The characterisic number $c$ is always larger than $1$. If $c=1$ (which amounts to $V(c, 1) = \mathbb R c$ for one (equivalently any) primitive tripotent), the PJTS $V$ is said to be \emph{reduced}. The characteristic number $b$ may be $0$. In this case, $V$ is said to be \emph{of tube type} or \emph{of Jordan algebra type}\footnote{ For information on Jordan algebras, se \cite{fk})}. The reason for the first name is that the dual space to the associated symmetric $R$-space can be realized as a (real) tube-type domain (see \cite{l4}). The second name comes from the fact that, for a maximal tripotent $c$, the space $V=V(c,1)$ can be equipped with a structure of real semi-simple Jordan algebra $V_c$ by letting $x ._c y = \{ x,c,y\}$. The PJTS structure can be recovered from the Jordan algebra structure by the relation $Q(x) = P_c(x)\circ Q(c)$, where $P_c$ is the usual quadratic operator on the Jordan algebra $V_c$. We will be come to these points later.
 
The spectral theory of a PJTS $V$ can be connected to Lie triple system properties of $V\simeq\mathfrak s$, mainly through the identity \eqref{tritri}.

Let $(c_1,c_2,\dots, c_r)$ be a Jordan frame of $V$. Let \[A=\{x= \sum_{j=1}^r t_j c_j, t_j\in \mathbb R, 1\leq j \leq r\}\] be the corresponding maximal flat subspace of $V$. Then $A$ is a Cartan subspace of for the Lie triple structure on $V$, or otherwise said, 
\[\mathfrak a = \{ \widetilde x, x= \sum_{j=1}^r t_j c_j, t_j\in \mathbb R, 1\leq j \leq r\}\]
 is a Cartan subspace of $\mathfrak s$ . The restricted roots of the pair $(\mathfrak k, \mathfrak a)$ can be described as the linear forms $\lambda\neq 0$ on $\mathfrak a$ such that 
\[\mathfrak  s_\lambda = \{ X\in \mathfrak s, (\ad H)^2 X = -\lambda(H)^2 X, \forall a\in \mathfrak a\}\neq \{ 0\}\ .
\]

Let $x\in V$. Then (see \cite{l2} p. 221) \[(\ad \widehat x)^2\, \widehat y = \big(2(-L(x,x)+Q(x))y\big)\,\widehat { }\ .\]
Then, by elementary calculations, for $x= \sum_{j=1}^r t_j c_j$, the operators $L(x,x)$, $Q(x)$ and $\big(2(-L(x,x)+Q(x))y\big)$ can be described by the following matrix notation :
\[\begin{matrix} &V_{ii}^-& V_{ij}^+& V_{ij}^-& V_{i0}\\ \\
L(x,x)& t_j^2&\frac{1}{2}(t_i^2+ t_j^2)&\frac{1}{2}(t_i^2+ t_j^2)&\frac{1}{2} t_i^2\\ \\ Q(x) &-t_i^2&t_it_j&-t_it_j&0\\  \\2(L(x,x)-Q(x))&4t_i^2&(t_i-t_j)^2&(t_i+t_j)^2& t_i^2\\
\end{matrix}
\]
which gives both the roots and the root spaces. Denote by $\varepsilon_j$ the $j$-th coordinate on $\mathfrak a$ in the basis $(\widetilde c_1,\dots, \widetilde c_r)$.

\begin{proposition} The roots of the pair $(\mathfrak k, \mathfrak a)$ are 

$\bullet \pm 2 \varepsilon_i,1\leq i\leq r$ with multiplicity $c-1$

$\bullet \pm (\varepsilon_i-\varepsilon_j), 1\leq i<j\leq r$ with mutlipliciy $a_+$

$\bullet \pm (\varepsilon_i + \varepsilon_j), 1 \leq i<j\leq r$ with multiplicity $a_-$.

$\bullet \pm \varepsilon_i, 1\leq i\leq r$ with multiplicity $b$.

\end{proposition}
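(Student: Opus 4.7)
The statement depends only on spectral data for $(\ad H)^2$ on $\mathfrak{s}$, so the plan has two steps: express $(\ad \widetilde x)^2$ in Jordan-triple terms, then diagonalize via the Peirce decomposition associated with $(c_1,\dots,c_r)$.

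Using the Koecher-Kantor-Tits realization of Proposition~2.1, in which $\widetilde x = (x,0,x)$, a direct application of the bracket formula gives $[\widetilde x,\widetilde y] = (0,\,T,\,0) \in \mathfrak{h}$ with $T = 2(L(y,x)-L(x,y))$, and then $[\widetilde x,[\widetilde x,\widetilde y]] = (-Tx,\,0,\,T^t x)$. Since $L(u,v)^t = L(v,u)$ implies $T^t = -T$, this element lies in $\mathfrak{s}$ and corresponds under $\mathfrak{s}\simeq V$ to the vector $-Tx$. Unfolding $-Tx$ and using the symmetry $\{y,x,x\} = \{x,x,y\}$ yields
\[
(\ad \widetilde x)^2 \;=\; -2\bigl(L(x,x)-Q(x)\bigr) \quad \text{on } \mathfrak{s}\simeq V,
\]
consistent with the cited formula for $\widehat x$ acting on $\mathfrak{n}_-$.

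For $x = \sum_j t_j c_j \in A$, orthogonality of the $c_j$ gives $L(x,x) = \sum_j t_j^2 L(c_j,c_j)$, whose eigenvalue on each Peirce summand is immediate from the definition of that summand. For $Q(x) = \sum_{j,k} t_j t_k \{c_j,\cdot,c_k\}$, the Peirce rule $\{V_{ab},V_{bc},V_{cd}\}\subset V_{ad}$ forces only very specific index pairs to contribute: on $V_{ii}^-$ only $j=k=i$ survives, producing $-t_i^2\,\id$; on $V_{ij}$ (with $i<j$) only the pair $(j,k)\in\{(i,j),(j,i)\}$ survives, producing $2t_it_j\{c_i,\cdot,c_j\}$, which coincides with $Q(c)|_{V_{ij}}$ because all other $Q(c_k)$ and $\{c_k,\cdot,c_l\}$ vanish there, and hence is the involution that defines the splitting $V_{ij}^\pm$; on $V_{i0}$ no pair survives and $Q(x) = 0$. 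Combining with $L(x,x)$ reproduces the displayed table, so $2(L(x,x)-Q(x))$ acts as the scalars $4t_i^2$, $(t_i-t_j)^2$, $(t_i+t_j)^2$, $t_i^2$ on $V_{ii}^-$, $V_{ij}^+$, $V_{ij}^-$, $V_{i0}$ respectively.

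Setting $\varepsilon_j(\widetilde x) = t_j$ and matching $-\lambda(\widetilde x)^2$ to each eigenvalue of $(\ad \widetilde x)^2$ yields exactly the roots $\pm 2\varepsilon_i$, $\pm(\varepsilon_i-\varepsilon_j)$, $\pm(\varepsilon_i+\varepsilon_j)$, $\pm\varepsilon_i$, with multiplicities equal to the dimensions of the corresponding Peirce blocks, i.e.\ $c-1$, $a_+$, $a_-$, $b$ by Proposition~2.3; completeness holds because these blocks together with $A \simeq \mathfrak{a}$ exhaust $\mathfrak{s}$. The only non-mechanical point is the identification $Q(c)|_{V_{ij}} = 2\{c_i,\cdot,c_j\}|_{V_{ij}}$ used above, which is the one place where the $V_{ij}^\pm$ splitting plays a genuine role in the argument; everything else is bookkeeping driven by the Peirce selection rule.
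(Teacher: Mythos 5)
Your proposal is correct and follows essentially the same route as the paper: reduce the root computation to the eigenvalues of $(\ad \widetilde x)^2 = -2\bigl(L(x,x)-Q(x)\bigr)$ on $\mathfrak s\simeq V$ and diagonalize blockwise via the Peirce decomposition, reading off multiplicities from Proposition 2.3. The only difference is that you derive the $(\ad\widetilde x)^2$ formula directly from the Koecher--Kantor--Tits bracket and carry out the "elementary calculations" (the Peirce selection rule for $Q(x)$ and the identification $Q(x)|_{V_{ij}} = t_it_j\,Q(c)|_{V_{ij}}$) that the paper only tabulates with a citation to Loos.
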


To this description of Cartan subspaces of $\mathfrak s$ corresponds a description of  maximal tori of $X$.

\begin{proposition} Let $(c_1,c_2,\dots,c_r)$ be a Jordan frame of $V$ and $A =Ê\sum_{j=1}^r \mathbb R\, c_j$ the corresponding maximal flat of $V$. Let $\Exp: A \longrightarrow X$ be the map defined by $\Exp v = \exp(\widetilde v) (o)$.

$i)$ the image $T = \Exp A$ is a maximal torus of $X$. 

$ii)$ the kernel of $\Exp$ is the lattice $\Gamma$ in $A$ given by
\[\Gamma = \sum_{j=1}^r \pi \mathbb Z \, c_j\ .
\]
\end{proposition}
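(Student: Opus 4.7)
My plan is as follows. Part (i) follows from the standard fact that a maximal flat totally geodesic submanifold of a compact Riemannian symmetric space is a maximal torus: the discussion preceding the proposition already establishes that $\mathfrak a=\{\widetilde v:v\in A\}$ is a Cartan subspace of the Lie triple system $\mathfrak s$, so $\Exp A=\exp(\mathfrak a)\cdot o$ is such a maximal torus in $X\simeq K/K^o$.

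For part (ii), the central construction will be an $\mathfrak{sl}_2$-triple attached to each primitive tripotent $c_j$ through the Koecher--Kantor--Tits realization of Proposition 2.1: I set $e_j=(c_j,0,0)$, $f_j=(0,0,c_j)$, $h_j=(0,2L(c_j,c_j),0)$. A direct computation with the bracket formula yields $[h_j,e_j]=2e_j$, $[h_j,f_j]=-2f_j$, $[e_j,f_j]=-h_j$, while $\sigma(e_j)=f_j$ identifies $\widetilde c_j=e_j+f_j$ as the unique element of $\mathfrak s$ with value $c_j$ at $o$. Orthogonality of distinct $c_i,c_j$ in the Jordan frame --- which in PJTS terms says $L(c_i,c_j)=0$ and hence $c_j\in V(c_i,0)$ --- will force every cross-bracket between $\mathfrak l_i=\mathbb R e_i\oplus\mathbb R h_i\oplus\mathbb R f_i$ and $\mathfrak l_j$ to vanish, so these $\mathfrak{sl}_2$-subalgebras commute pairwise.

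For the inclusion $\Gamma\subset\ker\Exp$: in any 2-dimensional faithful representation of $\mathfrak l_j$, the element $\widetilde c_j$ is conjugate to $\bigl(\begin{smallmatrix}0 & 1 \\ -1 & 0\end{smallmatrix}\bigr)$, so $\exp(\pi\widetilde c_j)$ corresponds to $-I$. Since $\mathfrak l_j\cap\mathfrak p=\mathbb R h_j\oplus\mathbb R f_j$ is a Borel subalgebra of $\mathfrak l_j$, the orbit $L_j\cdot o$ of the rank-one subgroup $L_j=\langle\exp\mathfrak l_j\rangle$ is $\mathbb P^1(\mathbb R)$, on which $-I$ acts trivially; hence $\exp(\pi\widetilde c_j)(o)=o$. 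Pairwise commutativity of the $\mathfrak l_j$'s then gives $\exp\bigl(\sum_j\pi n_j\widetilde c_j\bigr)(o)=\prod_j\exp(\pi n_j\widetilde c_j)(o)=o$ for all $n_j\in\mathbb Z$, establishing $\Gamma\subset\ker\Exp$.

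For the reverse inclusion $\ker\Exp\subset\Gamma$: the rank-one restriction already yields $\ker\Exp\cap\mathbb R c_j=\pi\mathbb Z c_j$, because $\exp(\mathbb R\widetilde c_j)$ acts on $L_j\cdot o\simeq\mathbb P^1(\mathbb R)$ as $SO(2)$ with minimal period $\pi$. The hard part will be to promote these per-coordinate kernels to the full lattice $\Gamma$ in $A$. I plan to do this by observing that $\exp\colon\mathfrak a\to T'=\exp(\mathfrak a)\subset K$ identifies $T'$ with a compact $r$-dimensional subtorus, that the orbit map $T'\to T=T'\cdot o\subset X$ is a surjection of $r$-dimensional tori with finite kernel $T'\cap K^o$ (finite because its Lie algebra $\mathfrak a\cap\mathfrak k^o=0$), and that $\ker\Exp=\exp^{-1}(T'\cap K^o)\cap\mathfrak a$. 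The main obstacle will be identifying this finite group $T'\cap K^o$ precisely; I expect it to be generated by the images of the $\exp(\pi\widetilde c_j)$'s, which when lifted back to $\mathfrak a$ yield exactly the lattice $\Gamma$.
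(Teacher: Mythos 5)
The paper gives no proof of this proposition; it simply refers to Loos \cite{l2}, p.~218, so your argument must stand on its own. Your framework --- the pairwise commuting $\mathfrak{sl}_2$-triples $(e_j,h_j,f_j)$ attached to the Jordan frame inside the Kantor--Koecher--Tits algebra, with $\widetilde c_j=e_j+f_j$ --- is the right machinery, part (i) is fine, and the inclusion $\Gamma\subset\ker\Exp$ is essentially correct. One point you gloss over there: to conclude that $\exp(\pi\widetilde c_j)$ (the image of $-I$) fixes $o$, you assert that the orbit $L_j\cdot o$ is $\mathbb P^1(\mathbb R)$ rather than its double cover $L_j/\exp(\mathfrak b_j)\simeq S^1$; but deciding between the two is precisely the question of whether the image of $-I$ lies in $P$, so as stated the argument is circular. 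It is repairable inside your setup: writing the grading element as $Z=-\tfrac12 h_j+Z'$ with $[Z',\mathfrak l_j]=0$, the operator $\Ad\bigl(\exp(\pi\widetilde c_j)\bigr)$ acts trivially on $\mathfrak l_j$ (being the image of the central element $-I$), hence fixes $Z$, hence preserves the eigenspaces of $\ad Z$ and in particular $\mathfrak p$; thus $\exp(\pi\widetilde c_j)\in N_G(\mathfrak p)=P$ and fixes $o$. The same computation shows $\Ad(\exp(\tfrac\pi2\widetilde c_j))$ moves $f_j\in\mathfrak n^+$ into $\mathfrak n^-$, so the period on each axis is exactly $\pi$, as you claim.

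The genuine gap is the reverse inclusion $\ker\Exp\subset\Gamma$, which you explicitly leave as an expectation (``I expect it to be generated by\dots''). This is not a routine finishing step; it is the entire content of part (ii). A lattice in $A\simeq\mathbb R^r$ can meet every coordinate axis $\mathbb R c_j$ in exactly $\pi\mathbb Z\,c_j$ and still be strictly larger than $\sum_j\pi\mathbb Z\,c_j$ (for instance $\pi\mathbb Z^2+\mathbb Z\,\tfrac\pi2(1,1)$), and for general compact symmetric spaces the unit lattice is indeed not cubic --- the paper itself remarks right after the statement that the cubic unit lattice is the property that \emph{characterizes} symmetric $R$-spaces among compact symmetric spaces. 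So your per-coordinate kernels together with commutativity of the $\mathfrak l_j$ cannot by themselves pin down $\ker\Exp$; one must actually compute the finite group $\exp(\mathfrak a)\cap K^o$, e.g.\ by showing that $\Ad\bigl(\exp(\sum_j\theta_j\widetilde c_j)\bigr)$ preserves $\mathfrak p$ only when every $\theta_j\in\pi\mathbb Z$ (equivalently, by establishing the formula $\kappa(\sum_j\tan\theta_j\,c_j)=\Exp(\sum_j\theta_j c_j)$ together with the fact that $\Exp(\sum_j\theta_j c_j)$ leaves the chart $\mathcal O$ exactly when some $\theta_j\equiv\pi/2\pmod\pi$). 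As it stands, the hard half of (ii) is missing.
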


For the proof see \cite{l2} p. 218. This shows that the Riemannian symmetric space $X$ has a \emph{cubic unit lattice}, and this property is characteristic of the symmetric $R$-spaces among the compact symmetric spaces (see \cite{l2}).

For $\boldsymbol \theta = (\theta_1,\theta_2,\dots, \theta_r)\in (\mathbb R/ \pi \mathbb Z)^r$, denote by $a_{\boldsymbol \theta}$ the element of $T$ defined by  
\[ a_{\boldsymbol \theta}= \Exp(\sum_{j=1}^r \theta_j c_j)\ .\]

A consequence of these results is the following integration formula on $X$, in the case where the PJTS $V$ is simple (see \cite {hel} Ch. I, theorem 5.10). 

\begin{proposition} For $a= \exp\big(\sum_{i=1} ^r t_j\widetilde {c_i}\big)$
Let \[D(a) = \Big\vert\prod_{i=1}^r (\sin 2t_j)^{c-1} \prod_{1\leq i<j\leq r} \sin(t_i-t_j)^{a_+}  \prod_{1\leq i<j\leq r}{\sin(t_i+t_j)}^{a_-} \prod_{i=1}^r {(\sin t_j)}^b\Big\vert\ .\]
Then, for any continuous function $f$ on $X$ which is invariant by $K^0$,
\begin{equation}\label{integration}
 \int_X f(x) d\sigma(x) =\int_T f(a(o)) D(a) da\ .
\end{equation}
\end{proposition}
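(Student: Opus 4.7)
The plan is to recognize this statement as the classical Harish-Chandra--Helgason integration formula for a compact Riemannian symmetric space, applied to $X \simeq K/K^o$ and specialized via the restricted root data computed in the preceding proposition; no new Jordan-theoretic input is required at this stage.

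First I would record the symmetric-space data: $X \simeq K/K^o$ is a compact Riemannian symmetric space of rank $r$, with Cartan decomposition $\mathfrak{k} = \mathfrak{k}^o \oplus \mathfrak{s}$, Cartan subspace $\mathfrak{a} = \bigoplus_j \mathbb{R}\, \widetilde{c}_j \subset \mathfrak{s}$, and maximal torus $T = \Exp(\mathfrak{a}) \simeq \mathfrak{a}/\Gamma$ as identified in the preceding proposition. A $K^o$-invariant continuous function on $X$ is completely determined by its restriction to $T$, and Theorem~5.10 of \cite{hel}, Chapter~I, then yields
\begin{equation*}
\int_X f(x)\, d\sigma(x) = \gamma \int_T f(a(o)) \prod_{\alpha \in \Sigma^+} |\sin \alpha(H)|^{m_\alpha}\, da,
\end{equation*}
where $H = \log a \in \mathfrak{a}$, $\Sigma^+$ is a choice of positive restricted roots of $(\mathfrak{k}, \mathfrak{a})$, $m_\alpha$ is the multiplicity of $\alpha$, and $\gamma$ is a constant depending on the order of the Weyl group of the symmetric pair and on the normalization of measures.

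Next I would substitute the root data from the preceding proposition. Writing $H = \sum_j t_j \widetilde{c}_j$ so that $\varepsilon_j(H) = t_j$, the positive roots split into the four families $2\varepsilon_i$ (multiplicity $c-1$), $\varepsilon_i - \varepsilon_j$ for $i<j$ (multiplicity $a_+$), $\varepsilon_i + \varepsilon_j$ for $i<j$ (multiplicity $a_-$), and $\varepsilon_i$ (multiplicity $b$). The product $\prod_{\alpha \in \Sigma^+} |\sin \alpha(H)|^{m_\alpha}$ then collapses term by term into the density $D(a)$ displayed in the statement.

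The only point that really needs attention is the normalization: one fixes the $K$-invariant measure $d\sigma$ (left unspecified in Section~1) and the Haar measure $da$ on $T$ so that $\gamma$ is absorbed and no extra multiplicative constant remains. Simplicity of $V$ guarantees that the Jordan frame $(c_1,\dots,c_r)$ spans a Cartan subspace of full rank and that the four families above exhaust $\Sigma$, so no further bookkeeping is needed. In short, the Jordan-theoretic content has already been packaged in the preceding proposition through the identity \eqref{tritri}, and the statement is then a direct specialization of the classical Weyl integration formula on $K/K^o$.
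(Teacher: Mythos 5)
Your proposal is correct and matches the paper's own treatment: the paper proves this proposition precisely by citing Helgason, \emph{Groups and Geometric Analysis}, Ch.~I, Theorem 5.10, together with the restricted root data $(\pm 2\varepsilon_i,\ \pm(\varepsilon_i\pm\varepsilon_j),\ \pm\varepsilon_i)$ and multiplicities $(c-1, a_\pm, b)$ computed in the preceding proposition. Your remark about absorbing the normalization constant is consistent with the paper, which defers the normalization of $d\sigma$ to Section~6.
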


\section{ Classification of simple PJTS and associated symmetric $R$-spaces}

As the set of roots has to be a root system $\Sigma$, the determination of the possible roots of the pair $(\mathfrak k, \mathfrak k^o)$ leaves five possibilities, each giving in turn some information on the values of the possible characteristic  numbers.
\medskip

$\bullet$ type $A_{r-1} \times A_0$  \quad $\Sigma = \{ \pm (\varepsilon_i-\varepsilon_j), 1\leq i\leq j\leq r\}$
\medskip

$\bullet$ type $B_r$  \quad $\Sigma= \{ \pm \varepsilon_i, \pm \varepsilon_j\pm \varepsilon_i, 1\leq i<j\leq r\}$
\medskip

$\bullet$ type $C_r$ \quad $\Sigma = \{ \pm 2 \varepsilon_i,\pm \varepsilon_j\pm \varepsilon_i, 1\leq i<j\leq r\}$
\medskip

$\bullet$ type $BC_r$\quad $\Sigma = \{ \pm \varepsilon_i,\pm 2 \varepsilon_i, \pm \varepsilon_i\pm \varepsilon_j, 1\leq i<j\leq r\}$
\medskip

$\bullet$ type $D_r$  \quad$\Sigma = \{ \pm \varepsilon_i\pm \varepsilon_j, 1\leq i<j\leq r\}$
\medskip

They correspond in the vocabulary of PJTS to 
\medskip

$\bullet$ Euclidean type  \quad $a_- = 0,\  b=0,\  c=1$
\footnote{The corresponding space $X$ are, as Riemannian symmetric spaces, not irreducible. In fact, $X$ is locally a product of a torus by an irreducible symmetric space, with root system of type $A_{r-1}$, hence the notation   for the root system. The typical exemple is $U(r)$ which is locally isomorphic to $U(1)\times SU(r)$. The PJTS of this category correspond to \emph{simple Euclidean Jordan algebras} (converted into triple systems) and the symmetric spaces are the \emph{Shilov boundaries} of the bounded symmetric domains of tube-type.}
\medskip

$\bullet$ reduced, non-tube type \quad $ b> 0,\  c=1 $
\medskip

$\bullet$ non-reduced, tube type \quad $ b=0,\  c>1$
\medskip

$\bullet$ non-reduced, not tube-type \quad $b>0,\  c>1$
\medskip

$\bullet$ reduced, tube-type, and not of Euclidean type \quad $ b=0, \ c=1$.
\medskip

The Weyl group for $A_{r-1}$ is the permutation group over $\{1,2,\dots,r\}$, for $B_r, BC_r$ and $C_r$, it is the signed permutation group and for $D_r$ it is the signed permutation group with en even number of minus signs.

As a byproduct of this rough classification, observe that for any simple PJTS, $a_-=a_+$, except for the Euclidean type (for which $a_-=0$), and (possibly) for the cases of type $D_2$, as these are the only cases where the Weyl group does not map $\varepsilon_1-\varepsilon_2$ to $\varepsilon_1+\varepsilon_2$. The latter situation occurs only for the PJTS $\mathbb R^{p,q}$, for $2\leq p<q$.
\bigskip

\centerline {Table 1}
\centerline {\bf Irreducible PHJTS and compact Hermitian symmetric spaces}

$$\vbox{\halign{#&#&#&#&#&#&#\cr
& &\quad$\mathbb V\quad$&\quad $\mathbb X$\quad&$\quad r$\quad&\quad$a_\mathbb C$\quad&\quad$b_\mathbb C$\quad\cr
\noalign{\vskip 2mm}
&$\quad s\geq 1$\quad&\quad$ \mathbb C^{1\times s}$\quad&\quad$\mathbb P_s(\mathbb C)\quad$ &\quad1\quad&\quad0\quad&\quad $s-1$\quad
\cr
\noalign{\vskip 2mm}
&${2\leq r\leq s}$&\quad $\mathbb C^{r\times s}$\quad&\quad $ Gr(r,\mathbb C^{r+s})$\quad&\quad $ r$\quad &\quad1\quad&\quad $s-r$\quad\cr
\noalign{\vskip 2mm}
&\quad${r\geq 3}$\quad &\quad $Asym(2r,\mathbb C)$\quad&\quad $SO(4r)/U(2r)$\quad&\quad$r$&\quad$2$&\quad$0$\cr
\noalign{\vskip 2mm}
&\quad${r\geq 2}$\quad &\quad$Asym({2r+1}, \mathbb C)\quad$&$\quad SO({4r+2})/U({2r+1})$\quad&\quad $r$\quad&\quad$2$\quad&$\quad2$\quad\cr
\noalign{\vskip 2mm}
&\quad${r\geq 2}$\quad&\quad$Sym(r,\mathbb C)$\quad&\quad$Sp(r)/U(r)$\quad&\quad$r$\quad&\quad$1$\quad&\quad$0$\quad\cr
\noalign{\vskip 2mm}
&\quad${n\geq 5}$\quad&\quad$\mathbb C^n$\quad&\quad$\mathbb Q^n(\mathbb C)$\quad&\quad$2$\quad&\quad$n-2$\quad&\quad$0$\quad\cr
\noalign{\vskip 2mm}
& &\quad$\mathbb O_\mathbb C^{1\times 2}$\quad&\quad$E_6/Spin(10).T$\quad&\quad$2$\quad&\quad$6$\quad&\quad$4$\quad\cr
\noalign{\vskip 2mm}
& &\quad$\Herm(2,\mathbb O_{\mathbb C})$\quad&\quad$E_7/E_6.T$\quad&\quad$3$\quad&\quad$8$\quad&\quad$0$\quad\cr
}}$$ 
\bigskip

\centerline{Table 2}
\centerline{\bf Simple reduced PJTS of Euclidean type}

$$\vbox{\halign{#&#&#&#&#&#\cr
& &\quad$ V\quad$&\quad $ X$\quad&$\quad r$\quad&\quad$a$\quad\cr
\noalign{\vskip 2mm}
&\quad&\quad$ \mathbb R\  $\quad&\quad$ S^1\quad$ &\quad1\quad&\quad0\quad\cr
\noalign{\vskip 2mm}
&\quad$r\geq 2$\quad&\quad$ Sym(r,\mathbb R) $\quad&\quad$SO(2r)/U(r)$\quad &\quad $r$\quad&\quad $1$\quad\cr
\noalign{\vskip 2mm}
&\quad$r\geq 2$\quad&\quad$ Herm(r,\mathbb C) $\quad&\quad$U(r)$\quad &\quad $r$\quad&\quad $2$\quad\cr
\noalign{\vskip 2mm}
&\quad$r\geq 2$\quad&\quad$ Herm(r,\mathbb H) $\quad&\quad$U(2r)/Sp(r)$\quad &\quad $r$\quad&\quad $4$\quad\cr
\noalign{\vskip 2mm}
&\quad$n\geq 4$\quad&\quad$\mathbb R^{1,n-1} $\quad&\quad$SO_0(n+2)/SO(n)\times SO(2)$\quad &\quad $2$\quad&\quad $n-2$\quad\cr
\noalign{\vskip 2mm}
&&\quad$Herm(3,\mathbb O)$\quad&\quad$E_6.T/F_4$\quad &\quad $8$\quad&\quad $n-2$\quad\cr
}}$$
\bigskip

\centerline{Table 3}

\centerline{\bf Simple reduced PJTS (not of Euclidean type)}

$$\vbox{\halign{#&#&#&#&#&#&#\cr
& &\quad$ V\quad$&\quad $ X$\quad&$\quad r$\quad&\quad$a$\quad&\quad $b$\quad\cr
\noalign{\vskip 2mm}
&\quad $2\leq s$ &\quad$ \mathbb R^{1\times s}\quad$&\quad $ \mathbb P_s(\mathbb R)$\quad&$\quad 1$\quad&\quad$0$\quad&\quad $s-1$\quad\cr
\noalign{\vskip 2mm}
&\quad$ 2\leq r\leq s$\quad &\quad$ \mathbb R^{r\times s}\quad$&\quad $ Gr(r,\mathbb R^{r+s})$\quad&$\quad r$\quad&\quad$2$\quad&\quad $s-r$\quad\cr
\noalign{\vskip 2mm}
&\quad$ 2\leq r$\quad &\quad$ Asym(2r,\mathbb R)\quad$&\quad $SO(2r) $\quad&$\quad r$\quad&\quad$2$\quad&\quad $0$\quad\cr
\noalign{\vskip 2mm}
&\quad$ 2\leq r$\quad &\quad$Asym(2r+1,\mathbb R)\quad$&\quad $SO({2r+1})$\quad &\quad $r$\quad&\quad$2$\quad&\quad $2$\quad\cr
\noalign{\vskip 2mm}
&\quad$ 2\leq p\leq q$\quad &\quad$\mathbb R^{p,q} $\quad&\quad $ Q^{p,q}(\mathbb R)$\quad&$\quad r$\quad&\quad$\Big\{\begin{matrix}{a_+=q-1}\\{a_- = p-1}\end{matrix}$\quad&\quad $0$\quad\cr
\noalign{\vskip 2mm}
&\quad$ $\quad &\quad$\mathbb O_s^{1\times 2} $\quad&\quad $ Gr(2,\mathbb H^4)/\mathbb Z_2$\quad&$\quad 2$\quad&\quad$6$\quad&\quad $4$\quad\cr
\noalign{\vskip 2mm}
&\quad$ $\quad &\quad$Herm(3,\mathbb O_s) $\quad&\quad $SU(8)/Sp(4).\mathbb Z_2$\quad&$\quad 3$\quad&\quad$4$\quad&\quad $1$\quad\cr
}}$$
\vfill \eject

\centerline{Table 4}

\centerline{\bf Simple non-reduced PJTS}

$$\vbox{\halign{#&#&#&#&#&#&#&#\cr
& &\quad$ V\quad$&\quad $ X$\quad&$\quad r$\quad&\quad$a$\quad&\quad $b$\quad&\quad c\quad\cr
\noalign{\vskip 2mm}
&\quad$2\leq s$\quad &\quad$ \mathbb H^s\quad$&\quad $ \mathbb P_s(\mathbb H)$\quad&$\quad 1$\quad&\quad$0$\quad&\quad $4(s-1)$\quad&\quad $ 4$\quad\cr
\noalign{\vskip 2mm}
&\quad$2\leq r\leq s$\quad &\quad $ \mathbb H^{r\times s}$\quad&\quad $ \mathbb G_r(\mathbb H^{r+s})$\quad&\quad $r$\quad &\quad $4$\quad&\quad $4(s-r)$\quad&\quad $4$\quad\cr
\noalign{\vskip 2mm}
&\quad$2\leq r$\quad &\quad $ Aherm(r,\mathbb H)$\quad&\quad $Sp(r)$\quad&\quad $r$\quad &\quad $4$\quad&\quad $0$\quad&\quad $3$\quad\cr
\noalign{\vskip 2mm}
&\quad$3\leq n$\quad &\quad $ \mathbb R^n$\quad&\quad $S^n$\quad&\quad $1$\quad &\quad $0$\quad&\quad $0$\quad&\quad $n$\quad\cr
\noalign{\vskip 2mm}
& &$\quad \mathbb O^{1\times 2}$\quad&\quad $\mathbb P^2(\mathbb O)$\quad&\quad $1$\quad &\quad $0$\quad&\quad $8$\quad&\quad $8$\quad\cr
\noalign{\vskip 2mm}
}}$$

The classification of PJTS can be found in \cite{fkklr}  Part II Table 4, or in \cite {l4}. See also \cite{n1}, \cite{n2}.

\section{Compact Hermitian symmetric spaces, positive Hermitian Jordan triple systems, the dual Bergman operator and the complex canonical kernel}

A well-known example of symmetric $R$-spaces is provided by the \emph{compact Hermitian symmetric spaces}. Their theory is well-known, they are often presented as the dual spaces of the noncompact Hermitian symmetric spaces. We refer to \cite{fkklr} %mettre aussi Helgason en rŽfŽrence
Part III for a presentation of the main results. See also \cite{sa}.

We slighly modify our notation. The space $\mathbb X$ is then a complex manifold which is $\mathbb G/\mathbb P^+$ where $\mathbb G$ is a complex semi-simple Lie group and $\mathbb P^+$ a certain complex parabolic subgroup. The Cartan involution $\sigma$ is conjugate linear, whereas the involution $\theta$ is complex linear.The maximal compact subgroup is denoted by $U$ and $U^o = U\cap \mathbb P^+$. 

This category of spaces corresponds to PJTS admitting a complex structure, that is an operator $J$ on $V$ such that $J^2=-\id$ and
\[ J\{x,y,z\} = \{Jx,y,z\} = -\{x,Jy,z\}\ .\]

The space $V$ viewed now as a \emph{complex} vector space is denoted by $\mathbb V$ and called a \emph{positive Hermitian Jordan triple system} (PHJTS for short). In other words, a PHJTS is a complex vector space $\mathbb V$ with a map
$(x,y,z)\longmapsto \{x,y,z\}$
 which is $\mathbb C$-linear in $x$ and $z$, $\mathbb C$-conjugate linear in $y$ and satisfies the algebraic conditions \eqref{PJTS1} and \eqref{PJTS2}.

The \emph{trace form} of $\mathbb V$  given by
$(x,y) ={ \tr}_\mathbb C(L(x,y)$
 is a sesquilinear, and is positive-definite (as Hermitian form). If $L$ is a $\mathbb C$-linear endomorphism of $V$, denote by $L^*$ its adjoint with respect to the trace form. In particular, for any $x,y\in V$, $L(x,y)^* = L(y,x)$.

The group $Str(\mathbb V)$ is defined along the same line as before and is now a complex Lie subgroup of $GL(\mathbb V)$. The group of automorphisms $Aut(\mathbb V)$ is equal to $Str(\mathbb V)\cap U(\mathbb V)$.

The action of $\mathbb G$ on $\mathbb X$ can be transfered to a rational holomorphic action on $\mathbb V$ as before. For $g\in \mathbb G$ defined at $x\in \mathbb V$, we let $J(g,x)$ be the differential of $g$ at $x$. The differential $J(g,x)$ can be shown to belong to $Str(\mathbb V)$.

Let $\tau = \theta\circ \sigma$. The fixed points set of this third involution  of $\mathbb G$ is the group of holomorphic isometries of the noncompact Riemannian symmetric space \emph{dual} to $\mathbb X$. The \emph{Bergman operator} is defined  for $x,y\in \mathbb V$ by
\[B(x,y) = \id_V-2L(x,y)+Q(x)Q(y)\] 
This is a $\mathbb C$-linear endomorphism of $\mathbb V$, and, as a function of $x$ (resp. $y$) it is holomorphic (resp. conjugate holomorphic). The following proposition gives its most important property.

\begin{proposition} For $g\in \mathbb G$ and $x,y\in \mathbb V$ such that $g(x)$ and $\tau(g)(y)$ are defined,
\begin{equation}\label{covb}
B(g(x), \tau(g)(y))= J(g,x)\ B(x,y)\  J(\tau(g), y)^* \ .
\end{equation}
\end{proposition}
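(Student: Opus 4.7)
The plan is to verify the covariance identity on a generating set of $\mathbb G$ and propagate it via the group law. Both sides of \eqref{covb} are rational in $g \in \mathbb G$ (holomorphic in $x$ and antiholomorphic in $y$) on the open dense set where everything is defined, so establishing the identity on a set whose products generate $\mathbb G$ suffices.

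The first step is the multiplicativity of the identity. Suppose \eqref{covb} holds for $g_1$ and $g_2$. The chain rule $J(g_1g_2, x) = J(g_1, g_2 x)\,J(g_2, x)$, the fact that $\tau$ is a group involution, and the antilinearity rules for $*$ give
\begin{align*}
B(g_1g_2(x), \tau(g_1g_2)(y))
 &= J(g_1, g_2 x)\, B(g_2 x, \tau(g_2)y)\, J(\tau(g_1), \tau(g_2)y)^*\\
 &= J(g_1 g_2, x)\, B(x,y)\, J(\tau(g_1 g_2), y)^*.
\end{align*}
Hence the set of $g$'s satisfying \eqref{covb} is a subgroup, and it is enough to verify the identity for $g$ ranging over generators of $\mathbb G$, namely $\mathbb H = Str(\mathbb V)$, $N^- = \exp\mathfrak n_-$, and $N^+ = \sigma(N^-)$.

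For $g \in \mathbb H$, $g$ acts linearly on $\mathbb V$, so $J(g,x) = g$. The infinitesimal description of $\tau$ on $\mathfrak h$ coming from the KKT construction ($\sigma(0,T,0) = (0,-T^*,0)$, and $\theta$ fixes this component) yields $\tau(g) = (g^*)^{-1}$. The defining relations of the structure group in the Hermitian setting give $L(gx, (g^*)^{-1}y) = g L(x,y) g^{-1}$ and $Q(gx) = g Q(x) g^*$; applied to $(g^*)^{-1}$ in place of $g$, the second identity yields $Q((g^*)^{-1}y) = (g^*)^{-1} Q(y) g^{-1}$. Substituting into the definition of $B$ collapses $B(gx, (g^*)^{-1}y)$ to $g\, B(x,y)\, g^{-1}$, which matches $J(g,x)\, B(x,y)\, J(\tau(g),y)^* = g B(x,y) g^{-1}$. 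The case $g \in N^+$ will then follow by conjugating with a representative of the Cartan inversion, since $N^+ = \sigma(N^-)$ and the identity for $\sigma$ itself can be verified directly using $B(0,0) = \id$ and the classical formulas for the Jacobian of the conformal inversion in terms of Bergman operators at the origin.

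The main obstacle is the translation case $g = \overline n_v \in N^-$, where $g(x) = x + v$ and $J(g,x) = \id$. Here $\tau(\overline n_v)$ lies in $N^+$ and acts on $\mathbb V$ rationally by (up to sign) the Loos quasi-inverse associated with $v$, whose Jacobian at $y$ is (an inverse of) a Bergman operator. Unwinding this, the required identity becomes an equation of the shape $B(x+v, y^v) = B(x, y)\, B(v, y)^{-1}$ (after the appropriate identification of $\tau(\overline n_v)(y)$ with $y^v$), which is essentially Loos's \emph{fundamental formula} for the Bergman operator. I would prove it by first checking the infinitesimal version — differentiating at $v = 0$ and using only \eqref{PJTS1}--\eqref{PJTS2} — and then integrating along the one-parameter subgroup $t \mapsto \overline n_{tv}$, using the multiplicativity already established within $N^-$ to pass from the infinitesimal to the finite identity. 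Once this (admittedly calculational) Jordan-theoretic identity is secured, combining with the structure group case and the Cartan-inversion transport to $N^+$ yields \eqref{covb} on all of $\mathbb G$.
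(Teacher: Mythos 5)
The paper offers no proof of this proposition at all: it simply cites Satake (Ch.~II, Lemma 5.2), so any argument you give is necessarily a different route. Your architecture --- prove the cocycle (multiplicativity) property so that it suffices to check the identity on the generators $Str(\mathbb V)$, $N^-$, $N^+$ of $\mathbb G$ --- is exactly the standard proof in the literature (it is essentially how Satake and Loos establish it), and your two computations that are actually carried out are correct: the chain-rule verification that the set of $g$ satisfying \eqref{covb} is closed under products, and the structure-group case, where $L(gx,(g^*)^{-1}y)=gL(x,y)g^{-1}$ and $Q(gx)=gQ(x)g^*$ collapse $B(gx,(g^*)^{-1}y)$ to $gB(x,y)g^{-1}$.

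Two points deserve attention. First, your treatment of $N^+$ does not quite parse as written: $\sigma$ is an involution of the group $\mathbb G$, not an element of it, so ``the identity for $\sigma$ itself'' has no meaning without first fixing a group element realizing the inversion (which requires choosing a maximal tripotent, etc.). There is a cleaner route that avoids this entirely: since $B(y,x)=B(x,y)^*$ and $\tau$ is involutive, applying the already-established $N^-$ case to $h=\tau(g)$ with the roles of $x$ and $y$ exchanged and then taking adjoints yields \eqref{covb} for $g\in N^+$ directly. Second, the translation case $g=\overline n_v$ is where all the substance lies, and you have only named the target: the required identity is the Loos addition formula $B(x+v,y)=B(v,y)\,B(x,y^v)$ for the quasi-inverse (together with the fact that the Jacobian of $y\mapsto y^v$ is the inverse of a Bergman operator). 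Your plan to derive it by differentiating at $v=0$ using only \eqref{PJTS1}--\eqref{PJTS2} and integrating along $t\mapsto\overline n_{tv}$ is sound in principle, but it is a nontrivial Jordan-pair computation that remains to be executed; alternatively one can simply cite Loos (\emph{Jordan Pairs} or \emph{Bounded symmetric domains and Jordan pairs}), which puts you on the same footing as the paper's citation of Satake. With the $N^+$ step repaired as above and the quasi-inverse identity either cited or proved, your argument is complete and has the merit of being self-contained where the paper is not.
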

See \cite{sa} (ch II,  Lemma 5.2).

This formula is the holomorphic extension to $\mathbb G$ of the covariance property of the action of the isometry group of the bounded symmetric domain which is the dual Riemnnian symmetric space of $\mathbb X$. We need a version of this property for $\mathbb X$.

Let 
\begin{equation}C(x,y) = B(x,-y) = \id_V+2L(x,y)+Q(x)Q(y)
\end{equation}
which we call the \emph{dual Bergman operator} of $\mathbb V$.
\begin{proposition}
 For $g \in \mathbb G$ and $x,y\in \mathbb V$ such that $g(x)$ and $\sigma(g)(y)$ are defined,

\begin{equation}\label{covC}
C(g(x), \sigma(g)(y)) = J(g,x)\ C(x,y)\  J(\sigma(g),y)^*\ .
\end{equation}

\end{proposition}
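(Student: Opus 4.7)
The plan is to deduce this from the covariance property \eqref{covb} of the Bergman operator proved in the previous proposition, by exploiting the involution $\theta$. The key observation is that although $\theta$ appears at the group level in the definition $\tau = \theta\circ\sigma$, its effect on $\mathbb V$ is extremely simple: since $\theta(a,T,b) = (-a,T,-b)$ on the Lie algebra, the identification $\kappa : \mathbb V \to \mathcal O$ intertwines $\theta$ acting on $\mathbb X$ with the map $x \mapsto -x$ on $\mathbb V$. Concretely, for $v \in \mathbb V$ one has $\theta(\bar n_v) = \bar n_{-v}$, and since $\theta$ fixes $o$, we get $\theta(\kappa(v)) = \kappa(-v)$.

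From this, and the equivariance of $\theta$ with respect to the $\mathbb G$-action (namely $\theta(g\cdot y) = \theta(g)\cdot\theta(y)$), I would deduce the identity
\[\tau(g)(z) \;=\; -\,\sigma(g)(-z)\]
for $z \in \mathbb V$ wherever defined. This is the crucial bridge between the $\tau$-covariance of $B$ and the desired $\sigma$-covariance of $C$.

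Now substitute $y \mapsto -y$ in \eqref{covb}. The left-hand side becomes $B\bigl(g(x), \tau(g)(-y)\bigr) = B\bigl(g(x), -\sigma(g)(y)\bigr) = C\bigl(g(x),\sigma(g)(y)\bigr)$ by the bridge identity and the definition of $C$. The right-hand side becomes $J(g,x)\,B(x,-y)\,J(\tau(g),-y)^{*} = J(g,x)\,C(x,y)\,J(\tau(g),-y)^{*}$. It only remains to identify the two Jacobian factors, i.e.\ to show $J(\tau(g),-y) = J(\sigma(g),y)$. This follows from the chain rule applied to the factorization $z \mapsto -\sigma(g)(-z)$: the two copies of $-\id_{\mathbb V}$ at the input and output of $\sigma(g)$ conjugate the differential $J(\sigma(g),y)$ (the value of the inner $-\id$ at $z=-y$ gives input $y$), and since these two sign operators cancel, the resulting map equals $J(\sigma(g),y)$.

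I don't expect a real obstacle here; the work is entirely in setting up the sign conventions correctly. The only subtle point is justifying that $\theta$ acts on $\mathbb V$ as $-\id$ and that the identification $\tau(g)(\cdot) = -\sigma(g)(-\cdot)$ holds in the rational sense (so that both sides are defined on the same open subset), but both follow directly from the description of $\theta$ in Proposition 2.1(iv) and the functoriality of the $\mathbb G$-action on $\mathbb V$ under conjugation by $\theta$.
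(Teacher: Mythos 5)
Your proposal is correct and follows essentially the same route as the paper: the paper also reduces \eqref{covC} to \eqref{covb} by noting that $\theta$ acts on $\mathbb V$ as $-\id$ (realized there via $\iota=\exp(i\pi Z)$ with $\Ad\iota=\theta$), so that $\tau(g)(y)=-\sigma(g)(-y)$, and then substitutes $y\mapsto -y$ and cancels the two sign operators in the Jacobian by the chain rule. Your justification of the bridge identity via $\theta(\bar n_v)=\bar n_{-v}$ is just a different (equally valid) way of seeing the same fact.
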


\begin{proof}
Let $\iota = \exp(i\pi Z)$. Then $\Ad \iota = \theta$, so that the involution $\theta$ of $\mathbb G$ coincides with the inner automorphism $g\longmapsto \iota\circ g\circ \iota$. Moreover, for $x\in \mathbb V$, $\iota(x) = -x$. Hence, \eqref{covb} can be rewritten as

\[ B(g(x), -\sigma(g)(-y)) = J(g,x)B(x,y)J(\iota \circ \sigma(g)\circ \iota, y)^*\ .
\]
Now, by the chain rule,
\[J(\iota\circ \sigma(g)\circ \iota, y) = -\id_V\circ J(\sigma(g),-y)\circ (-\id_V) = J(\sigma(g),-y)\ .
\]
Now change $y$ to $-y$ to obtain \eqref{covC}.
\end{proof}

The \emph{complex canonical kernel} on $\mathbb V\times \mathbb V$ is defined by
\[ c(x,y) = {\det}_{\mathbb C} \,C(x,y)\ .\]

The complex canonical kernel is holomorphic in $x$ and antiholomorphic in $y$. It satisfies the covariance relation
\begin{equation}\label{covc}
c(g(x), \sigma(g) y) = j(g,x) c(x,y)\overline {j(\sigma(g),y)}\ ,
\end{equation}
where $j(g,x)$ is the (complex) \emph{Jacobian} of $g$ at $x$.

{\bf Example}. Let $V=\mathbb C$, with the Jordan triple product
\[\{x,y,z\} = x\,\overline y\,z\ .
\]
Then $\mathbb G = SL_2(\mathbb C)$ acts on $\mathbb C$ by
\[\Bigg(g=\begin{pmatrix} a&b\\c&d \end{pmatrix},x\Bigg)\longmapsto \frac{ax+b}{cx+d}\ .
\]
The Cartan involution $\sigma$ is given by
\[\sigma(\begin{pmatrix} a&b\\c&d \end{pmatrix}) = \begin{pmatrix} \overline d&-\overline c\\-\overline b&\overline a \end{pmatrix}
\]
The canonical kernel is given by $c(x,y) = 1+2x\overline y +x^2\overline y^2=(1+x\overline y)^2$ and
\[j(g,x) = \frac{1}{(cx+d)^2}, \quad j(\sigma(g), y) = \quad \frac{1}{(-\overline b y +\overline a)^2}
\]
The transformation formula \eqref{covc} now reads

\[ \Bigg(1+ \Big(\frac{ax+b}{cx+d}\Big)\Big(\frac{d\overline y +c}{-b\overline y +a}\Big)\Bigg)^2 = \frac{1}{(cx+d)^2}\,(1+x\overline y)^2\,\frac{1}{(-b\overline y +a)^2}\ \ .
\]

\section{The dual Bergman operator and the canonical kernel for a PJTS}

Let $\mathbb V$ be a PHJTS. An \emph{involution} $\alpha$ of $\mathbb V$ is, by definition, a conjugate linear map of $\mathbb V$ which is involutive ($\alpha\circ \alpha = \id)$ and satisfies
\[\{ \alpha x, \alpha y, \alpha z\} = \alpha\{x,y,z\}\ .
\]
Then $V=\{ x\in \mathbb V, \alpha(x) = x\}$ is easily seen to be a PJTS. The PJTS $V$ is said to be a \emph{real form} of the PHJTS $\mathbb V$. Now conversely, any PJTS in a canoncial way a real form of a PHJTS. In fact, let $V$ be a PJTS. Then let $\mathbb V = V\otimes_{\mathbb R} \mathbb C$ be its complexification and extend the Jordan product $\{x,y,z\}$ to $\mathbb V$ in a $\mathbb C$-linear way in $x$ and $z$ and in a conjugate linear way in $y$. Then $\mathbb V$ is easily seen to be a PHJTS, and the conjugation with respect to $V$ is an involution of $\mathbb V$. The space $\mathbb V$ is called the \emph{Hermitification} of $V$.

Let $V$ be a PJTS, let $\mathbb V$ be its Hermitification,  and let $^- : z\mapsto \overline z$ be the conjugation of $\mathbb V$ w.r.t. $V$. There corresponds an involution of $\mathbb G$, given by $g\longmapsto { ^-}\circ g\, \circ { ^-}$ and the fixed points set $G$ of this involution is a real form of $\mathbb G$. It is a real semisimple Lie group. Let $P=\mathbb P^+\cap G$ which is a parabolic subgroup of $G$. Then the quotient space $X=G/P$ is a real form of $\mathbb X =\mathbb G/\mathbb P^+$. The Cartan involution $\sigma$ of $\mathbb G$ restricts to a Cartan involution of $G$, and  $X = (G\cap U)/ (G\cap U^o)= K/K^o$ is realized as a compact Riemannian symmetric space.

 As for the case of PHJTS, define the \emph{dual Bergman operator} of $V$ as
\[C(x,y) = \id+2L(x,y) + Q(x)Q(y)\ .
\]

Then \eqref{covC} implies, for $x,y\in V$ and $g\in G$ such that $g$ is defined at $x$ and $\sigma(g)$ defined at $y$,
\[C\big(g(x),\sigma(g)(y)\big) = J(g,x) C(x,y) J\big(\sigma(g),y\big)^t\ .
\]

Define the \emph{canonical kernel} on $V\times V$ to be
\[c(x,y) =\vert \Det C(x,y)\vert\ . \] 
For $g\in G$ and $x\in V$ and $g$ defined at $x$, let  $j(g,x) =\vert  \Det J(g,x) \vert$ be the Jacobian of $x$ at $g$.

\begin{proposition} Let $g\in G$, and $x,y\in V$ such that $g$ is defined at $x$ and $\sigma(g)$ is defined at $y$. Then
\begin{equation}\label{ccov}
c(g(x), \sigma(g) (y)) =  j(g,x) c(x,y)  j(\sigma(g), y)\ .
\end{equation}
\end{proposition}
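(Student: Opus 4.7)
The plan is to deduce Proposition 5.1 directly from its complex analogue (Proposition 4.2 plus the identity $c(x,y) = \det_{\mathbb C} C(x,y)$ in the PHJTS case) by restricting to the real form and then passing to the modulus of the determinant.

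First I would verify that for $x,y \in V$ and $g \in G$, every operator appearing in the complex covariance formula $C(g(x), \sigma(g)(y)) = J(g,x)\, C(x,y)\, J(\sigma(g),y)^{*}$ of Proposition 4.2 is the $\mathbb{C}$-linear extension of a real operator on $V$. For $C(x,y) = \id + 2L(x,y) + Q(x)Q(y)$ this is immediate from the fact that $V$ is closed under the Jordan triple product, so $L(x,y)$ and $Q(x)Q(y)$ preserve $V$. For $J(g,x)$ with $g\in G$, the action of $g$ on $\mathbb V$ restricts to the action of $g$ on $V$, so its differential at a point of $V$ preserves $V$; the same holds for $J(\sigma(g),y)$ since $\sigma(g)\in G$. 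Thus each side of the covariance identity is the complexification of a real endomorphism of $V$.

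Next, observe that for any $\mathbb{C}$-linear operator $A$ on $\mathbb V$ which is the complexification of a real operator $A_{\mathbb R}$ on $V$, one has $\det_{\mathbb C}(A) = \det_{\mathbb R}(A_{\mathbb R})$, and the Hermitian adjoint $A^{*}$ coincides with the real transpose $A_{\mathbb R}^{t}$ (since the Hermitian trace form on $\mathbb V$ restricts to the Euclidean trace form on $V$). Consequently, restricting the complex covariance to $V$ produces the identity
\begin{equation*}
C(g(x), \sigma(g)(y)) = J(g,x)\, C(x,y)\, J(\sigma(g),y)^{t}
\end{equation*}
of real operators on $V$. Taking the real determinant of both sides, using multiplicativity and $\Det(A^{t}) = \Det(A)$, yields
\begin{equation*}
\Det C(g(x), \sigma(g)(y)) = \Det J(g,x) \cdot \Det C(x,y) \cdot \Det J(\sigma(g),y).
\end{equation*}

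Taking absolute values and invoking the definitions $c(x,y)=|\Det C(x,y)|$ and $j(g,x)=|\Det J(g,x)|$ gives the desired formula \eqref{ccov}. No serious obstacle appears: the only delicate point is the passage from the complex Hermitian adjoint in Proposition 4.2 to the real transpose, which is settled by the compatibility of the trace forms under real/complex extension. The statement is therefore essentially a formal corollary of Proposition 4.2 combined with the observation that the real form $V$ is preserved by all the operators in sight.
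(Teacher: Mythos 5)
Your proof is correct and follows the same route as the paper: the paper likewise deduces the real covariance $C(g(x),\sigma(g)(y)) = J(g,x)\,C(x,y)\,J(\sigma(g),y)^t$ by restricting the complex identity \eqref{covC} to the real form $V$, and then obtains \eqref{ccov} by taking (absolute values of) determinants. You merely make explicit the details the paper leaves implicit, namely that all operators involved preserve $V$ and that the Hermitian adjoint restricts to the real transpose.
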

This property follows from the covariance property of the dual Bergman operator.

\begin{proposition} Let $V$ be a simple PJTS of rank $r$ and let $(c_1,c_2,\dots, c_r)$ be a Jordan frame of $V$. Let $x=\sum_{j=1}^r x_jc_j$, and set $x_0=0$. 
\medskip

$i)$ for $0\leq i\leq j\leq r$ and $y_{ij}\in V_{ij}$
\[C(x,x) y_{ij} = (1+x_i^2)(1+x_j^2) y_{ij}\ .\]

$ii)$ 

\begin{equation}
c(x,x) =\big( \prod_{j=1}^r (1+x_j^2)\big)^p 
\end{equation}

where $p= (r-1)a +b+2c$ is the \emph{genus} of $V$.

\end{proposition}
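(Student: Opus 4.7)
The plan is to observe that the dual Bergman operator $C(x,x) = \Id + 2L(x,x) + Q(x)^2$ is simultaneously diagonalized, as a function on $V$, by the Peirce decomposition $V = \bigoplus_{0 \le i \le j \le r} V_{ij}$ associated with the given frame. Part (ii) then reduces to multiplicativity of the determinant together with a straightforward count of how often each factor $(1 + x_k^2)$ appears, so the real content is in (i).

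For (i): the eigenvalues of $L(x,x)$ on each Peirce component are read off from the table already given in the excerpt, namely $\tfrac12(x_i^2 + x_j^2)$ on $V_{ij}$ for $1 \le i < j \le r$, $x_j^2$ on $V_{jj}$, and $\tfrac12 x_j^2$ on $V_{j0}$; all three cases collapse into the single expression $\tfrac12(x_i^2 + x_j^2)$ under the convention $x_0 = 0$ (replacing $x_j^2$ on $V_{jj}$ by $\tfrac12(x_j^2+x_j^2)$). The remaining task is to show $Q(x)^2 = x_i^2 x_j^2 \Id$ on $V_{ij}$. Expanding
\[
Q(x) y_{ij} = \{x, y_{ij}, x\} = \sum_{k,l} x_k x_l \{c_k, y_{ij}, c_l\}
\]
and applying the Peirce multiplication rule $\{V_{\alpha\beta}, V_{\beta\gamma}, V_{\gamma\delta}\} \subset V_{\alpha\delta}$ (with all other brackets vanishing), only the pairs $(k,l) = (i,j)$ and $(j,i)$ survive when $1 \le i \le j \le r$, and none survive when the piece is $V_{j0}$. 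Using the triple-product symmetry $\{c_i, y, c_j\} = \{c_j, y, c_i\}$, this yields $Q(x)\vert_{V_{ij}} = x_i x_j\, Q(c_i+c_j)\vert_{V_{ij}}$ for $1 \le i < j$ (with $Q(c_i+c_j)$ restricting to the involution on $V_{ij}$ whose $\pm1$-eigenspaces are $V_{ij}^{\pm}$), $x_j^2 Q(c_j)$ on $V_{jj}$ (with $Q(c_j)$ an involution on $V_{jj}$), and $0$ on $V_{j0}$. Squaring gives $Q(x)^2 = x_i^2 x_j^2 \Id$ on each $V_{ij}$ uniformly (with $x_0 = 0$), and assembling the three terms produces $1 + (x_i^2 + x_j^2) + x_i^2 x_j^2 = (1+x_i^2)(1+x_j^2)$, establishing (i).

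For (ii): since $C(x,x)$ acts as a positive scalar on each $V_{ij}$,
\[
c(x,x) = \Det C(x,x) = \prod_{0 \le i \le j \le r}\bigl((1+x_i^2)(1+x_j^2)\bigr)^{\dim V_{ij}}.
\]
With $\dim V_{jj} = c$, $\dim V_{ij} = a$ for $1 \le i < j \le r$, and $\dim V_{0j} = b$, the total exponent of $(1+x_k^2)$ receives $2c$ from $V_{kk}$, a contribution $a$ from each of the $r-1$ off-diagonal components $V_{ij}$ (with $i,j \ge 1$) containing the index $k$, and $b$ from $V_{0k}$. Summing gives $(r-1)a + b + 2c = p$, yielding the stated formula.

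The only genuine obstacle is the identification of $Q(x)^2$ on each Peirce piece; once the nonvanishing frame-index patterns are pinned down via the Peirce rules and $Q(c_i+c_j)\vert_{V_{ij}}$ is recognized as the involution defining $V_{ij}^{\pm}$, the rest is dimensional bookkeeping.
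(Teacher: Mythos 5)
Your proposal is correct and follows essentially the same route as the paper: diagonalize $C(x,x)=\Id+2L(x,x)+Q(x)^2$ on the Peirce decomposition, identify the eigenvalues of $L(x,x)$ and of $Q(x)^2$ on each $V_{ij}$, and then obtain $(ii)$ by counting dimensions, with the exponent $(r-1)a+b+2c=p$. The only cosmetic difference is that you express $Q(x)\vert_{V_{ij}}$ via the involution $Q(c_i+c_j)$ on each block, where the paper uses $Q(c)$ for the maximal tripotent $c=c_1+\dots+c_r$ and the fact that $Q(c)^2$ is the projection onto $V(c,1)$; these are equivalent by the same Peirce multiplication rules.
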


\begin{proof}The first statement is a consequence of the following identities, for $x= \sum_{j=1}^r x_j c_j$ and $y_{ij}\in V_{ij}, 0\leq i\leq j\leq r$
\[C(x,x) = \id +2L(x,x)+Q(x)^2\]
 \[L(x,x) = \sum_{j=1}^r x_j^2 L(c_j,c_j)\]
 \[ Q(x)y_{ij} = x_ix_j Q(c)y_{ij}\]
where $c=\sum_{j=1}^r c_j$, and the fact that $Q(c)^2$ is the projection on $V(c,1)$. As $C(x,x)$ is scalar on each $V_{ij}$, its determinant is easily computed.
\end{proof}

The canonical kernel satisfies the \emph{symmetry property}
\begin{equation}
c(x,y) = c(y,x)
\end{equation}
for all $x,y\in V$, which is a consequence of the symmetry property of the (dual) Bergman opeartor
\[C(y,x) = C(x,y)^t\ .\]

\begin{proposition}\label{cV2}
 Let $c$ be a maximal tripotent in $V$, and let $V=V_2\oplus V_1$ be the associated Peirce decomposition.  Then the following identity holds :
\begin{equation}\label{V2}
c(x_2+x_1,y_2)= c(x_2,y_2).
\end{equation}
for any $x_1\in V_1, x_2\in V_2$ and $y_2\in V_2$.
\end{proposition}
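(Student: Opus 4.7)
The plan is to view $C(x_2+x_1,y_2)$ as a block operator with respect to the Peirce decomposition $V=V_2\oplus V_1$ (noting that $V_0=\{0\}$ since $c$ is maximal) and show that, together with $C(x_2,y_2)$, both operators are block triangular with identical diagonal blocks. The determinants will then coincide, giving the result $c(x_2+x_1,y_2)=c(x_2,y_2)$.

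The key computational input is the Peirce multiplication rule $\{V_i,V_j,V_k\}\subset V_{i-j+k}$ (with $V_\ell=0$ for $\ell\notin\{0,1,2\}$). First I would record the vanishings that will kill the off-diagonal blocks: $Q(y_2)V_1\subset V_{2-1+2}=0$, and $L(x_1,y_2)V_1\subset V_{1-2+1}=V_0=0$. These immediately show that $C(x_2,y_2)$ preserves $V_1$ (since $L(x_2,y_2)V_1\subset V_1$ and $Q(x_2)Q(y_2)V_1=0$) and preserves $V_2$ (by the straightforward $V_{2-2+2}=V_2$ checks), so $C(x_2,y_2)$ is block diagonal. Likewise, $C(x_2+x_1,y_2)$ sends $V_1$ into $V_1$, because the extra $L(x_1,y_2)$ and $Q(x_1)Q(y_2)$ terms both annihilate $V_1$; moreover on $V_1$ the operator agrees with $C(x_2,y_2)|_{V_1}$.

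Next I would treat the $V_2\to V_2$ block. For $z\in V_2$ one has $w:=Q(y_2)z\in V_2$, and expanding
\[
Q(x_2+x_1)w=\{x_2,w,x_2\}+2\{x_2,w,x_1\}+\{x_1,w,x_1\},
\]
the Peirce rule places the first term in $V_2$, the second in $V_1$, and the third in $V_{1-2+1}=0$. Similarly $2L(x_2+x_1,y_2)z=2\{x_2,y_2,z\}+2\{x_1,y_2,z\}$ with the first summand in $V_2$ and the second in $V_1$. Therefore the $V_2$-component of $C(x_2+x_1,y_2)z$ equals $C(x_2,y_2)z$, while the $V_1$-image may be nonzero. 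In matrix form with respect to $V=V_2\oplus V_1$,
\[
C(x_2,y_2)=\begin{pmatrix}C(x_2,y_2)|_{V_2}&0\\0&\id+2L(x_2,y_2)|_{V_1}\end{pmatrix},\qquad
C(x_2+x_1,y_2)=\begin{pmatrix}C(x_2,y_2)|_{V_2}&0\\ \ast &\id+2L(x_2,y_2)|_{V_1}\end{pmatrix}.
\]

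The conclusion is then immediate: both matrices are (block) triangular with identical diagonal blocks, so $\Det C(x_2+x_1,y_2)=\Det C(x_2,y_2)$, and taking absolute values yields \eqref{V2}. The only real obstacle is the bookkeeping of Peirce weights in the $Q(x_2+x_1)Q(y_2)$ term on $V_2$, and this is handled by the single observation that all $x_1$-contributions land in $V_1$ rather than $V_2$.
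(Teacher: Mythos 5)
Your proof is correct and follows essentially the same route as the paper's: both exhibit $C(x_2+x_1,y_2)$ as a block lower-triangular operator with respect to $V=V_2\oplus V_1$ whose diagonal blocks coincide with those of the block-diagonal operator $C(x_2,y_2)$, and then compare determinants. Your explicit bookkeeping of the Peirce weights $\{V_i,V_j,V_k\}\subset V_{i-j+k}$ simply makes precise the matrix entries that the paper writes down directly.
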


\begin{proof} The linear operators on $V$ will be denoted in matrix form with respect to the decomposition $V=V_2+V_1$. Let $x=x_2+x_1$ be an arbitrary element of $V$. Then $L(x,y_2)$ has the following form :
\[L(x,y_2) = \begin{pmatrix}\{x_2,y_2,.\}&0\\\{x_1,y_2,.\}&\{x_2,y_2,.\} \end{pmatrix}\ .
\]
Similarly,
\[Q(y_2) = \begin{pmatrix}\{y_2,.,y_2\}&0\\0&0 \end{pmatrix}
\]
and
\[ Q(x) = \begin{pmatrix}\{x_2,.,x_2\}&2\{x_1,.,x_2\}\\ & \\2\{x_1,.,x_2\}&\{x_1,.,x_1\}\end{pmatrix}
\]
so that
\[C(x,y_2) = \begin{pmatrix}1_{W_2} +2L(x_2,y_2)_{\vert V_2}+Q(x_2)_{\vert V_2}Q(y_2)_{\vert V_2}&0\\ & \\&\\\star&1_{W_1} +2\{x_2,y_2,.\}\end{pmatrix}\ .
\]
Hence,
\[ c(x,y_2) = c(x_2,y_2)\ . \]
\end{proof}

\section{The canonical kernel in the compact picture}

We use freely of the notation introduced so far. Let $V$ be a simple PJTS. In this section, we transfer the previous results in the compact picture, using the map $\kappa$.  Some preliminary results will be needed.

\begin{proposition}\label{invm} The measure on $V$ define by
\[ f\longmapsto \int_V f(x) \,c(x,x)^{-\frac{1}{2}} dx\]
is invariant under the action of $K$.
\end{proposition}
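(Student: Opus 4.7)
The plan is to derive the invariance directly from the covariance property of the canonical kernel, combined with an ordinary change of variables. Since $K = G^\sigma$, for any $k \in K$ we have $\sigma(k) = k$, so the covariance relation \eqref{ccov} specializes to
\[
c(k(x), k(y)) = j(k,x)\, c(x,y)\, j(k,y)
\]
on the open dense subset of $V \times V$ where $k$ acts. Setting $y = x$ gives the key identity
\[
c(k(x), k(x)) = j(k,x)^2\, c(x,x),
\qquad\text{hence}\qquad
c(k(x), k(x))^{-1/2} = j(k,x)^{-1}\, c(x,x)^{-1/2}.
\]

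With this in hand, $K$-invariance reduces to a routine change of variables. For $f \in \mathcal{C}_c^\infty(V)$ and $k \in K$, substitute $y = k(x)$, i.e. $x = k^{-1}(y)$, so that $dx = j(k^{-1}, y)\, dy$. Applying the boxed identity with $k$ replaced by $k^{-1}$ yields
\[
c(k^{-1}(y), k^{-1}(y))^{-1/2} = j(k^{-1}, y)^{-1}\, c(y,y)^{-1/2}.
\]
Therefore
\[
\int_V f(k(x))\, c(x,x)^{-1/2}\, dx
= \int_V f(y)\, j(k^{-1}, y)^{-1}\, c(y,y)^{-1/2}\, j(k^{-1}, y)\, dy
= \int_V f(y)\, c(y,y)^{-1/2}\, dy,
\]
so the two Jacobian factors cancel and the measure is invariant.

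The one point requiring care is that the action of $k \in K$ on $V$, being transferred from $X$ via the chart $\kappa$, is defined only on an open dense subset of $V$; the exceptional locus is the preimage of the complement of $\kappa(V)\cap k^{-1}(\kappa(V))$ in $X$, which is a proper real-analytic subvariety and hence has Lebesgue measure zero in $V$. The computation above is therefore valid almost everywhere, which suffices for the identity of measures. I do not expect any deeper obstacle: the entire argument is essentially the observation that the covariance formula, read on the diagonal with $g \in K$, gives precisely the conformal factor needed to balance the Jacobian of the change of variables.
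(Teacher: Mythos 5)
Your argument is exactly the paper's: specialize the covariance relation \eqref{ccov} to $g=k\in K=G^\sigma$ (so $\sigma(k)=k$), read it on the diagonal to get $c(k(x),k(x))=j(k,x)^2c(x,x)$, and cancel against the Jacobian in the change of variables. The extra remark about the action being defined only off a measure-zero set is a point the paper leaves implicit, but it changes nothing in substance.
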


\begin{proof} Let $k$ be in $K$. Then $\sigma(k) = k$, so that, at any point $x$ where $k(x)$ is defined,
\[c(k(x),k(x))= j(k,x)^2 c(x,x)\ .
\]
Now 
\[ \int_V f(k^{-1}(x)) c(x,x)^{-\frac{1}{2}} dx = \int_V f(y) c(k(y), k(y))^{-\frac{1}{2}} j(k,y) dy\]
\[ = \int_V f(y) c(y,y)^{-\frac{1}{2}} dy\ .
\]
\end{proof}

This result can be used to normalize the $K$-invariant measure $d\sigma$  on $X$, by the condition that
\[ \int_X f(x) d\sigma = \int_V f ( \kappa (v)) c(v,v)^{-\frac{1}{2}}dv\ .\]
In turn, it implies, that for this normalization of the measure $d\sigma$
\begin{equation}\label{jack}
 j(\kappa,v)  = \frac{ d\sigma\big(\kappa(v)\big)}{ dv} = c(v,v)^{-\frac{1}{2}}\ . 
\end{equation}

\begin{proposition} Let $(c_1,c_2,\dots, c_r)$ be a Jordan frame of $V$. For $\boldsymbol {\theta} = (\theta_1,\theta_2, \dots, \theta_r) \in (\mathbb R/\pi\mathbb Z)^r$, 
\begin{equation}
\kappa(\sum_{j=1}^r \tan \theta_j c_j) = \Exp(\sum_{j=1}^r \theta_j  c_j)\end{equation}
\end{proposition}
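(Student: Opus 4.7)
The plan is to transfer everything to the $V$-picture via $\kappa$ and solve an ODE. By definition $\Exp v = \exp(\widetilde v)(o)$ where $\widetilde v \in \mathfrak s$ is the unique vector field in $\mathfrak s$ with $\widetilde v(o)=v$. Under the KKT realization of Proposition 2.1, $\mathfrak s = \{(a,0,a) : a \in V\}$, so $\widetilde v$ corresponds to the triple $(v,0,v)$. Using the explicit formula for the infinitesimal $\mathfrak g$-action on $V$, namely $(a,T,b) \mapsto (a + Tx + Q(x)b)\,\partial/\partial x$, the vector field on $V$ associated with $\widetilde v$ is
\[
X(x) \;=\; v + Q(x)\,v.
\]
Thus it suffices to show that the integral curve $x(t)$ of $X$ starting at $0 \in V$ satisfies $x(1) = \sum_{j=1}^r \tan(\theta_j)\,c_j$, where $v = \sum_{j} \theta_j c_j$.

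The key algebraic step is to verify that $A = \bigoplus_{j=1}^r \mathbb R\,c_j$ is stable under $X$ and that the ODE decouples. Because $(c_1,\dots,c_r)$ is a Jordan frame, the $c_j$ are mutually orthogonal primitive tripotents; hence, for $i \neq j$, one has $L(c_i,c_j)=0$ and $c_j \in V(c_i,0)$, which gives $L(c_i,c_i)c_j = 0$. Since $c_i$ is primitive, $Q(c_i)$ vanishes on $V(c_i,\tfrac12) \oplus V(c_i,0)$, so $Q(c_i)c_j = 0$ for $i\neq j$. Combining these with $\{c_i,c_i,c_i\}=c_i$ gives
\[
\{c_i,c_j,c_k\} \;=\; \delta_{ij}\delta_{jk}\,c_i.
\]
Therefore, for $x = \sum_i x_i c_i \in A$ and $v = \sum_j \theta_j c_j$, a trilinear expansion yields $Q(x)v = \{x,v,x\} = \sum_i x_i^2 \theta_i\,c_i$, so $X(x) = \sum_i \theta_i(1+x_i^2)\,c_i \in A$.

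Consequently, the integral curve stays in $A$ and the ODE splits into $r$ scalar equations $\dot x_j(t) = \theta_j(1+x_j(t)^2)$ with $x_j(0) = 0$, whose solutions are $x_j(t) = \tan(\theta_j t)$. Evaluating at $t=1$ gives $x(1) = \sum_j \tan(\theta_j)\,c_j$, and pushing forward by $\kappa$ yields
\[
\Exp\Bigl(\sum_{j=1}^r \theta_j c_j\Bigr) \;=\; \exp(\widetilde v)(o) \;=\; \kappa\Bigl(\sum_{j=1}^r \tan(\theta_j)\,c_j\Bigr).
\]

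The only subtlety is that $\kappa$ is a diffeomorphism onto the open dense set $\mathcal O$, so the identity makes sense directly whenever all $\tan \theta_j$ are finite; for $\theta_j \in \pi/2 + \pi\mathbb Z$ the right-hand side is interpreted by continuity/limit in $X$. The main (and essentially only) obstacle is the Peirce-type calculation showing $\{c_i,c_j,c_k\}$ is diagonal, but this is immediate from the definition of a Jordan frame. Everything else is a first-year ODE.
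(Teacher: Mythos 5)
Your proof is correct. The paper itself gives no argument for this proposition --- it simply refers to Loos \cite{l2} --- so you are supplying a proof where the text defers to the literature; the route you take (identify $\widetilde v$ with $(v,0,v)$ in the Kantor--Koecher--Tits picture, read off the quadratic vector field $X(x)=v+Q(x)v$, check via the Peirce rules that the maximal flat $A$ is invariant, and solve the decoupled Riccati equations $\dot x_j=\theta_j(1+x_j^2)$) is the natural and standard one. Two small points of precision. First, the diagonality $\{c_i,c_j,c_k\}=\delta_{ij}\delta_{jk}c_i$ can be read off directly from the joint Peirce multiplication rules $\{V_{ij},V_{jk},V_{kl}\}\subset V_{il}$ (all other products zero) already stated in the paper, so you need not invoke separately that orthogonality forces $c_j\in V(c_i,0)$. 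Second, the periodicity issue deserves to be handled slightly more carefully than ``whenever all $\tan\theta_j$ are finite'': even if $\tan\theta_j$ is finite, a representative such as $\theta_j=3\pi/4$ makes the integral curve in $V$ blow up before $t=1$ (it leaves $\mathcal O$ and comes back). The clean statement is: choose representatives $\theta_j\in(-\pi/2,\pi/2)$, for which $x_j(t)=\tan(\theta_j t)$ exists on all of $[0,1]$ and hence $\kappa(x(t))=\exp(t\widetilde v)(o)$ by uniqueness of integral curves on $X$; then extend to all of $(\mathbb R/\pi\mathbb Z)^r$ using that both sides are $\pi$-periodic in each $\theta_j$ --- the left side because $\tan$ is, the right side because the kernel of $\Exp$ on $A$ is the lattice $\sum_j\pi\mathbb Z\,c_j$ from the preceding proposition. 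With these adjustments the argument is complete.
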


For a proof, see \cite{l2}.

The following theorem answers the question we addressed at the end of section 1.

\begin{theorem}\label{ctildex}
There exists a unique everywhere nonnegative continuous function $\widetilde c$ on $X\times X$ such that
\medskip

$i)$ $\widetilde c \big((o, a_\theta ( o)\big) =\big( \prod_{j=1}^r  \cos^2 \theta_j\big)^{\frac{p}{2}}$,
for all $\theta \in (\mathbb R/\pi \mathbb Z)^r$.
\medskip

$ii)$ for all $s,t\in X$ and $g\in G$
\[ \widetilde c(g(s), \sigma(g)(t)) = j(g,s)\,\widetilde c(s,t)\,j(\sigma(g),t)\ .\]

\end{theorem}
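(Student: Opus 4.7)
I construct $\widetilde c$ on the dense open subset $\kappa(V)\times\kappa(V)\subset X\times X$ from the canonical kernel $c$ of Section 5 and the Jacobian identification \eqref{jack}, then extend by $K$-equivariance. Motivated by (ii), I set, for $u,v\in V$,
\[
\widetilde c\bigl(\kappa(u),\kappa(v)\bigr) := c(u,v)\,j(\kappa,u)\,j(\kappa,v)=\frac{c(u,v)}{c(u,u)^{1/2}c(v,v)^{1/2}}\,,
\]
which is continuous and nonnegative on $\kappa(V)\times\kappa(V)$ (since $c\ge 0$ and $c(u,u)\ge 1$).

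\textbf{Covariance on the dense subset.} Write $g_V$ for the transferred rational action of $G$ on $V$, so that $g\circ\kappa=\kappa\circ g_V$; applying the chain rule to this identity together with \eqref{jack} yields
\[
j(g,\kappa u)=\frac{c(u,u)^{1/2}}{c(g_Vu,g_Vu)^{1/2}}\,j_V(g,u)\,,
\]
where $j_V$ denotes the Jacobian on $V$ with respect to Lebesgue measure appearing in \eqref{ccov}. Using \eqref{ccov} in the numerator of $\widetilde c(g\kappa u,\sigma(g)\kappa v)$ and substituting this identity (and its analogue for $\sigma(g)$) into the denominator produces exactly $j(g,\kappa u)\,\widetilde c(\kappa u,\kappa v)\,j(\sigma(g),\kappa v)$, proving (ii) on the open dense subset where $g_Vu$ and $\sigma(g)_Vv$ are both defined. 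Specializing to $g=k\in K$, where $\sigma(k)=k$ and $j(k,\cdot)\equiv 1$ because $K$ preserves $d\sigma$, gives the $K$-invariance $\widetilde c(ks,kt)=\widetilde c(s,t)$.

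\textbf{Value on the torus, extension, uniqueness.} For $\boldsymbol\theta\in(-\pi/2,\pi/2)^r$, put $v_{\boldsymbol\theta}=\sum_j\tan\theta_j\,c_j$; the preceding proposition gives $\kappa(v_{\boldsymbol\theta})=a_{\boldsymbol\theta}(o)$. Since $C(0,y)=\id$ one has $c(0,v_{\boldsymbol\theta})=c(0,0)=1$, and the formula of the preceding proposition gives $c(v_{\boldsymbol\theta},v_{\boldsymbol\theta})=\prod_j\cos^{-2p}\theta_j$, whence $\widetilde c(o,a_{\boldsymbol\theta}(o))=\prod_j|\cos\theta_j|^p=(\prod_j\cos^2\theta_j)^{p/2}$, establishing (i) on the open Weyl chamber. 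To extend globally, I use that $X\setminus\kappa(V)$ has positive codimension, so for any $(s_0,t_0)\in X\times X$ the set of $k\in K$ with $k^{-1}s_0,k^{-1}t_0\in\kappa(V)$ is open and dense in $K$; defining $\widetilde c(s,t):=\widetilde c(k^{-1}s,k^{-1}t)$ for such a $k$ is unambiguous by the $K$-invariance already proved and yields a continuous nonnegative extension to all of $X\times X$. Identity (ii) then holds on an open dense subset of $X\times X\times G$ and extends to the whole space by continuity of $\widetilde c$ and of $j(g,\cdot)$, while (i) extends likewise to all $\boldsymbol\theta$. For uniqueness, any $\widetilde c'$ satisfying (i) and (ii) is $K$-invariant by (ii) with $g\in K$, hence determined by its values at $(o,a_{\boldsymbol\theta}(o))$, which are fixed by (i); the Cartan decomposition for the Riemannian symmetric space $X=K/K^o$ writes every $(s,t)\in X\times X$ in this form modulo the Weyl group, and the required Weyl-invariance holds because, by the classification in Section 3, each Weyl group acts by (possibly signed) permutations of the $\theta_j$, under which $\prod_j\cos^2\theta_j$ is invariant.

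\textbf{Main obstacle.} The delicate step is the continuous extension of $\widetilde c$ across $X\setminus\kappa(V)$, where both $c(u,v)$ and $c(u,u)$ blow up; the trick is to lean on the $K$-invariance established on the dense subset together with the codimension argument to translate any boundary point back into a $K$-translate of $\kappa(V)\times\kappa(V)$, thereby avoiding any direct asymptotic analysis of the ratio $c(u,v)/\bigl(c(u,u)^{1/2}c(v,v)^{1/2}\bigr)$ as $u,v\to\infty$ in $V$.
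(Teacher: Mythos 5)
Your proof is correct, but it reverses the logical order of the paper's argument, and the difference is worth noting. The paper first constructs $\widetilde c$ \emph{globally}: it takes the Weyl-invariant function $\prod_j\cos^2\theta_j$ on the maximal torus, invokes harmonic analysis on the compact symmetric space $X$ to extend it to a $K^o$-invariant smooth function on $X$ and then to a $K$-invariant kernel $\widetilde k$ on $X\times X$, sets $\widetilde c=\widetilde k^{p/2}$, and only afterwards proves the identity $\widetilde c(\kappa(x),\kappa(y))=c(x,x)^{-1/2}c(x,y)c(y,y)^{-1/2}$ on $\mathcal O\times\mathcal O$ (by reducing, via $K$-invariance of both sides, to the case $y=0$, $x$ in the flat), from which the $G$-covariance follows by the same computation you perform. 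You instead take that identity as the \emph{definition} of $\widetilde c$ on the dense chart and must then supply the global continuous extension yourself, which you do by the $K$-translation trick: near any $(s_0,t_0)$ a fixed good $k\in K$ gives the manifestly continuous local formula $\widetilde c(s,t)=\widetilde c(k^{-1}s,k^{-1}t)$, consistent by the $K$-invariance already established on the chart. Your route buys independence from the (not entirely trivial) extension theorem for Weyl-invariant functions on the torus, at the cost of the well-definedness/local-continuity argument for the patched extension — which you state a bit tersely but which does go through, precisely because a single $k$ works on a whole neighborhood. The computational core (the chain-rule relation between $j(g,\kappa u)$ and the Jacobian on $V$ via \eqref{jack}, the use of \eqref{ccov}, the evaluation $c(0,v_{\boldsymbol\theta})=1$ and $c(v_{\boldsymbol\theta},v_{\boldsymbol\theta})=\prod_j\cos^{-2p}\theta_j$, and the continuity extension of the covariance identity) coincides with the paper's; your uniqueness argument via the decomposition of $X\times X$ into $K$-translates of $\{o\}\times T$ is also the one the paper implicitly uses.
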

 
The following proposition is a preparation for the proof of the theorem.

\begin{proposition}\label{ktildex}
 There exists a unique smooth function $\widetilde k$ on $X\times X$ such that
\medskip

$i)$ $\widetilde k (o, a_\theta) = \prod_{j=1}^r  \cos^2 \theta_j$,
for all $a_\theta \in T$.
\medskip

$ii)$ $\widetilde k(k(s)), k(t)) = \widetilde k(s,t)$, for all $s,t\in X$ and $k\in K$.

\end{proposition}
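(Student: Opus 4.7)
My plan is to establish uniqueness from the $K$-orbit structure of $X\times X$, construct $\widetilde k$ explicitly in the noncompact picture using the canonical kernel $c$ on $V$, and then extend by $K$-invariance, relying on a Chevalley-type restriction theorem to obtain smoothness across the complement of the open cell $\kappa(V)$.

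For uniqueness, I observe that since $K$ acts transitively on $X=K/K^o$, any $(s,t)\in X\times X$ is diagonally $K$-conjugate to some $(o,t')$. By the Cartan (``polar'') decomposition for the compact Riemannian symmetric space $X$, every $K^o$-orbit meets the maximal torus $T=\Exp A$, so $(s,t)$ is $K$-conjugate to some $(o,a_{\boldsymbol \theta}(o))$. Conditions (i) and (ii) then pin down the value of $\widetilde k$ at every point.

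For existence, I first define $\widetilde k$ on the open dense subset $\{o\}\times\mathcal O$, where $\mathcal O=\kappa(V)$, by the explicit formula $\widetilde k(o,\kappa(v)) := c(v,v)^{-1/p}$. This is smooth and strictly positive since $c(v,v)>0$. For $v=\sum_j \tan\theta_j\, c_j\in A$ with $|\theta_j|<\pi/2$, Proposition~5.2(ii) gives $c(v,v)=\prod_j(1+\tan^2\theta_j)^p$, so $c(v,v)^{-1/p}=\prod_j\cos^2\theta_j$, confirming (i) on $T\cap\mathcal O$. Moreover, $c(v,v)^{-1/p}$ is $K^o$-invariant on $V$, because $K^o\simeq\Aut(V)$ acts by PJTS-automorphisms and hence preserves $L$, $Q$, $C$, and the canonical kernel.

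The main obstacle is the smooth extension of $\widetilde k(o,\cdot)$ across $X\setminus\mathcal O$, where $\kappa^{-1}$ is undefined; in general there are $K^o$-orbits entirely contained in this boundary, so $K^o$-invariance alone does not suffice. I would handle this by invoking the Chevalley restriction theorem for compact Riemannian symmetric spaces: smooth $K^o$-invariant functions on $X$ correspond bijectively to smooth $W$-invariant functions on $T$, where $W=N_{K^o}(A)/Z_{K^o}(A)$. By Proposition~2.4, $W$ acts on the parameters $(\theta_1,\dots,\theta_r)$ through signed permutations modulo $\pi$. The function $\prod_j\cos^2\theta_j$ is manifestly smooth on $A$, descends to $T=A/\Gamma$ by $\pi$-periodicity of $\cos^2$, and is invariant under all signed permutations because $\cos^2$ is even and the product is symmetric. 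Hence it extends uniquely to a smooth $K^o$-invariant function $\phi:X\to\mathbb R$, which on $\mathcal O$ agrees with $v\mapsto c(v,v)^{-1/p}$ by the $A$-computation and $K^o$-equivariance. Finally, setting $\widetilde k(s,t):=\phi(k^{-1}(t))$ for any $k\in K$ with $k(o)=s$ yields a smooth function on $X\times X$ satisfying (i) and (ii); it is well-defined by $K^o$-invariance of $\phi$ and smooth in $(s,t)$ via local smooth sections of the fibration $K\to K/K^o=X$.
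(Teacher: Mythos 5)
Your proposal is correct and follows essentially the same route as the paper: the key step in both is the Chevalley-type restriction theorem identifying smooth $K^o$-invariant functions on $X$ with smooth Weyl-invariant functions on the torus $T$, applied to $\prod_j\cos^2\theta_j$, followed by extension to $X\times X$ via the transitive $K$-action. The extra material on $c(v,v)^{-1/p}$ is not needed here (the paper establishes that identification in a subsequent proposition), but it does no harm.
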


\begin{proof}

The proof uses several results of harmonic analysis on a compact symmetric space. The function $\ell : a_\theta \longmapsto  \prod_{j=1}^r  \cos^2 \theta_j$ is a smooth function on the torus $T\simeq (\mathbb R/\pi \mathbb Z)^r$, which is moreover invariant by the Weyl group\footnote{Recall that the Weyl group consists in permutations of $\{1,2,\dots, r\}$ and possibly sign changes}. Hence there exists a unique smooth function (still denoted by $\ell$) on $X$ which is invariant by $K^o$ and coincides on the torus with $\ell$. Now, there exists a unique smooth function $\widetilde k$ on $X\times X$ such that $\widetilde k(k(s),k(t))= \widetilde k(s,t)$ and $k(o,t) = \ell(t)$ for any $k\in K$ and $s,t\in X$.
\end{proof}

Set $\widetilde c(s,t) = \widetilde k (s,t) ^{\frac{p}{2}}$. The function $\widetilde c$ is continuous and everywhere nonnegative, as this was already true for $\widetilde k$. The covariance relation for $\widetilde c$ under the action of $G$ is obtained through the comparaison of $\widetilde c$ with $c$.

Let $\mathcal O = \kappa(V)$, which is a dense open subset of $X$.

\begin{proposition} Let $s=\kappa(x)$ and $t=\kappa(y)$ be in $\mathcal O$. Then
\begin{equation}\label{ctildec}
\widetilde c(s,t) =c(x,x)^{-\frac{1}{2}} c(x,y) c(y,y)^{-\frac{1}{2}}
\end{equation}
\end{proposition}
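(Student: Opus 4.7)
My plan is to check that the right-hand side
\[F(s,t) \;=\; c(x,x)^{-1/2}\,c(x,y)\,c(y,y)^{-1/2}\qquad\bigl(s=\kappa(x),\ t=\kappa(y)\in\mathcal O\bigr),\]
satisfies the two characterizing properties (i) and (ii) of $\widetilde c$ stated in Theorem \ref{ctildex}. By the uniqueness clause of that theorem together with the continuity of $F$ on $\mathcal O\times\mathcal O$ and the density of $\mathcal O$ in $X$, this will force $F=\widetilde c$ on $\mathcal O\times\mathcal O$, which is exactly \eqref{ctildec}. Continuity and non-negativity of $F$ are immediate, since $c(\cdot,\cdot)\ge 0$ and $c(x,x)>0$ everywhere by the explicit formula $c(x,x)=\prod_j(1+x_j^2)^p$ on a Jordan frame, extended to $V$ by transitivity of the structure group.

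The heart of the proof is the $G$-covariance (ii). For $g\in G$ such that $g(s)$ and $\sigma(g)(t)$ lie in $\mathcal O$, equivalently $g\cdot x$ and $\sigma(g)\cdot y$ are defined via the rational action of $G$ on $V$, the middle factor transforms by \eqref{ccov} directly,
\[c(g\cdot x,\sigma(g)\cdot y)\;=\;j(g,x)\,c(x,y)\,j(\sigma(g),y),\]
where $j$ here denotes the Jacobian of the rational action on $V$. For the two diagonal factors I would differentiate the intertwining identity $g\circ\kappa=\kappa\circ g_V$, take determinants, and invoke \eqref{jack} to identify $c(x,x)^{-1/2}$ with the Jacobian of $\kappa$ at $x$; this gives
\[c(g\cdot x,g\cdot x)^{-1/2}\;=\;j_X(g,s)\,c(x,x)^{-1/2}\,j(g,x)^{-1},\]
where $j_X(g,s)=d\sigma(g(s))/d\sigma(s)$ is the Jacobian on $X$, and the analogous identity holds for the $(\sigma(g),t,y)$ factor. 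Substituting the three identities into $F(g(s),\sigma(g)(t))$, the $V$-Jacobians $j(g,x)$ and $j(\sigma(g),y)$ cancel pairwise, leaving exactly $j_X(g,s)\,F(s,t)\,j_X(\sigma(g),t)$, which is property (ii).

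For the normalization (i), the proposition just above writes $a_\theta(o)=\kappa(y_\theta)$ with $y_\theta=\sum_j\tan\theta_j\,c_j$. Since $L(0,\cdot)=0$ and $Q(0)=0$, one has $C(0,y_\theta)=C(0,0)=\id$, so $c(0,0)=c(0,y_\theta)=1$; the explicit formula $c(x,x)=\prod_j(1+x_j^2)^p$ evaluated at $y_\theta$ yields $c(y_\theta,y_\theta)=\prod_j(1+\tan^2\theta_j)^p=\prod_j\cos^{-2p}\theta_j$. Combining,
\[F(o,a_\theta(o))\;=\;\Bigl(\prod_j\cos^{-2p}\theta_j\Bigr)^{-1/2}\;=\;\Bigl(\prod_j\cos^2\theta_j\Bigr)^{p/2}\;=\;\widetilde c(o,a_\theta(o)).\]
The expected main obstacle is the chain-rule bookkeeping that links the Jacobians of the compact and noncompact actions through $\kappa$ in the covariance step; once this is in place, the normalization reduces to the explicit diagonal calculation of $C$ at a torus point, and the Cartan decomposition (every diagonal $K$-orbit in $X\times X$ meets $\{o\}\times T$, combined with $j_X(k,\cdot)\equiv 1$ for $k\in K$) finishes the identification of $F$ with $\widetilde c$.
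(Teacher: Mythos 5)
Your argument is correct, but it inverts the paper's logical order and does more work than the statement requires. In the paper this proposition is itself a step in the proof of Theorem \ref{ctildex}: at this point $\widetilde c=\widetilde k^{\,p/2}$ is only known to be $K$-invariant and normalized on $\{o\}\times T$ (Proposition \ref{ktildex}); its $G$-covariance (ii) is deduced \emph{afterwards} from \eqref{ctildec}. So you cannot literally invoke ``the uniqueness clause of Theorem \ref{ctildex}'', since applying it presupposes that $\widetilde c$ already satisfies (ii) --- that would be circular. Fortunately your closing sentence contains the non-circular identification, which is exactly the paper's proof: both $\widetilde c$ and $F$ are invariant under the diagonal $K$-action (for $F$ this is the $g=k$ case of your covariance computation, using $\sigma(k)=k$ and $j(k,\cdot)\equiv 1$), every diagonal $K$-orbit meets $\{o\}\times T$, and the two functions agree there by your normalization computation $F(o,a_{\boldsymbol\theta}(o))=\bigl(\prod_j\cos^2\theta_j\bigr)^{p/2}=\widetilde c(o,a_{\boldsymbol\theta}(o))$, using $c(0,y)=1$ and $c(y_\theta,y_\theta)=\prod_j(1+\tan^2\theta_j)^p$. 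Your additional verification of the full $G$-covariance of $F$ --- the chain-rule identity $c(g\cdot x,g\cdot x)^{-1/2}=j_X(g,s)\,c(x,x)^{-1/2}\,j(g,x)^{-1}$ obtained from $g\circ\kappa=\kappa\circ g_V$ and \eqref{jack}, combined with \eqref{ccov} --- is sound, but it is precisely the content of the proposition that \emph{follows} \eqref{ctildec} in the paper (the covariance of $\widetilde c$), which the paper instead derives as a corollary of \eqref{ctildec}. So your route proves both statements at once at the cost of the extra Jacobian bookkeeping, whereas the paper gets \eqref{ctildec} in two lines from $K$-invariance alone and then harvests the covariance; just rephrase the appeal to uniqueness so that it rests on Proposition \ref{ktildex} rather than on Theorem \ref{ctildex}.
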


\begin{proof} Observe that both sides are invariant by the diagonal action of $K$ (at least where defined). Hence it is enough to verify the relation when $t=o$, which amounts to $y=0$, and $s=a_\theta$ in $T\cap \mathcal O$. The corresponding value of $x$ is $x=\sum_{j=1} \tan \theta_j c_j$, as $\kappa(x) = a_\theta $. Now $c(x,0)=1$, whereas
$c(x,x) =\big( \prod_{j=1}^r (1+\tan^2 \theta_j)\big)^p$. Thus \eqref{ctildec} follows. 
\end{proof}

\begin{proposition} The function $\widetilde c$ satisfies the following covariance property 

\[ \widetilde c(g(s), \sigma(g)(t)) = j(g,s)\,\widetilde c(s,t)\,j(\sigma(g),t)\ .\]
for all $s,t\in X$ and $g\in G$,
\end{proposition}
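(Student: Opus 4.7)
The plan is to verify the identity first on a dense open subset of $X\times X$ where both sides are explicitly computable from the canonical kernel $c$ on $V\times V$, and then extend by continuity. Let $\mathcal{O}=\kappa(V)$; for a fixed $g\in G$, let $\mathcal{U}_g\subset X\times X$ be the set of pairs $(s,t)$ such that $s,g(s)\in\mathcal O$ and $t,\sigma(g)(t)\in\mathcal O$. Since $\mathcal O$ is open and dense and $g,\sigma(g)$ act by diffeomorphisms of $X$, the set $\mathcal{U}_g$ is open and dense in $X\times X$. Write $s=\kappa(x)$ and $t=\kappa(y)$ with $x,y\in V$, and note that by the definition of the transferred action on $V$ one has $g(s)=\kappa(gx)$ and $\sigma(g)(t)=\kappa(\sigma(g)y)$.

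The key computation is to compare the Jacobians in the two pictures. Applying the chain rule to the equality $\kappa\circ g = g\circ\kappa$ on $V$ and using \eqref{jack} yields
\begin{equation*}
j(g,\kappa(x))\,c(x,x)^{-\frac12} = c(gx,gx)^{-\frac12}\,j(g,x),
\end{equation*}
so $j(g,\kappa(x)) = c(gx,gx)^{-\frac12}\,j(g,x)\,c(x,x)^{\frac12}$, and likewise $j(\sigma(g),\kappa(y)) = c(\sigma(g)y,\sigma(g)y)^{-\frac12}\,j(\sigma(g),y)\,c(y,y)^{\frac12}$. Substituting these into the right-hand side of the desired identity together with the formula \eqref{ctildec}, the two factors $c(x,x)^{\pm\frac12}$ and $c(y,y)^{\pm\frac12}$ cancel and one is left with
\begin{equation*}
j(g,s)\,\widetilde c(s,t)\,j(\sigma(g),t) = c(gx,gx)^{-\frac12}\,j(g,x)\,c(x,y)\,j(\sigma(g),y)\,c(\sigma(g)y,\sigma(g)y)^{-\frac12}.
\end{equation*}
On the other hand, applying \eqref{ctildec} at the pair $(g(s),\sigma(g)(t))=(\kappa(gx),\kappa(\sigma(g)y))$ and then invoking the covariance \eqref{ccov} of the canonical kernel on $V\times V$ produces exactly the same expression. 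Hence the identity holds on $\mathcal{U}_g$.

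Finally, the three functions $\widetilde c$, $j(g,\cdot)$ and $j(\sigma(g),\cdot)$ are all continuous on $X\times X$ (the last two because $g$ and $\sigma(g)$ act smoothly on the compact manifold $X$), and the identity is an equality of continuous functions on the dense subset $\mathcal{U}_g$. It therefore extends by continuity to all of $X\times X$. The only delicate point is the chain-rule bookkeeping that produces the $c(x,x)^{\pm\frac12}$ factors; once \eqref{jack} is in hand, this is purely mechanical, and no further analytic input is needed beyond continuity.
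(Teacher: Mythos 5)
Your proposal is correct and follows exactly the route the paper takes (and leaves as a "routine calculation"): verify the identity on the dense open set where $s,g(s),t,\sigma(g)(t)$ all lie in $\mathcal O=\kappa(V)$, using \eqref{ctildec}, the Jacobian relation \eqref{jack} via the chain rule on $\kappa\circ g=g\circ\kappa$, and the covariance \eqref{ccov} of $c$, then extend by continuity. Your explicit chain-rule bookkeeping $j(g,\kappa(x))=c(gx,gx)^{-\frac12}\,j(g,x)\,c(x,x)^{\frac12}$ is exactly the cancellation the paper has in mind, and the continuity argument is the same.
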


\begin{proof} For $g\in G$ and $s,t$ in $\mathcal O$ such that $g(s)$ and $g(t)$ are in $\mathcal O$, routine calculation starting from the covariance property satsified by $c$ and using \eqref{ctildec} and \eqref{jack}. The full covariance property is then obtained by using the continuity of $\widetilde c$ on $X\times X$.
\end{proof}

The proof of Theorem \ref{ctildex} is then complete. In fact the uniqueness statement follows form the same argument we gave for the uniqueness of $\widetilde k$ in the proof of Proposition \ref{ktildex}.

\section{The domain of convergence for the intertwining integral}

\begin{theorem}\label{absconv} 
Let $X$ be a simple symmetric $R$-space, of rank $r$ and characteristic numbers $r,a=a_++a_-, b,c$ and genus $p= (r-1)a+b+2c$. Let
\[c(\lambda) = \int_X \widetilde c(o,y)^{-\frac{1}{2}+\lambda} d\sigma(y)\ .
\]
The integral $c(\lambda)$ is absolutely convergent if and only if
\[\Re (\lambda) > \frac{1}{2} -\frac{c}{p} \ .
\]
\end{theorem}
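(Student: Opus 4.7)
The plan is to reduce the integral over $X$ to an integral over the maximal torus $T$ via the integration formula \eqref{integration}, then analyze the resulting one-dimensional boundary behavior. The function $y\mapsto \widetilde c(o,y)$ is $K^o$-invariant, since by the covariance property of Theorem \ref{ctildex}, for $k\in K^o$ (which satisfies $\sigma(k)=k$ and $k(o)=o$, and has Jacobian $1$ at $o$ since it is an isometry), we have $\widetilde c(o,k(y))=\widetilde c(o,y)$. Thus the integration formula \eqref{integration} applies.

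Combining Proposition \ref{ktildex}, Theorem \ref{ctildex}, and the formula for $D(a_{\boldsymbol\theta})$, the integral becomes (up to a positive constant coming from the Jacobian of the change of variables from $T$ to its fundamental domain)
\[
c(\lambda)=\int_{[0,\pi)^r}\prod_{j=1}^r|\cos\theta_j|^{p(\lambda-\frac12)}\prod_{j=1}^r|\sin 2\theta_j|^{c-1}|\sin\theta_j|^{b}\!\!\prod_{1\le i<j\le r}\!\!|\sin(\theta_i-\theta_j)|^{a_+}|\sin(\theta_i+\theta_j)|^{a_-}\,d\boldsymbol\theta.
\]
Using the identity $|\sin 2\theta_j|^{c-1}=2^{c-1}|\sin\theta_j|^{c-1}|\cos\theta_j|^{c-1}$, the integrand reorganizes as
\[
2^{r(c-1)}\prod_{j=1}^r|\cos\theta_j|^{p(\lambda-\frac12)+c-1}|\sin\theta_j|^{b+c-1}\!\!\prod_{1\le i<j\le r}\!\!|\sin(\theta_i-\theta_j)|^{a_+}|\sin(\theta_i+\theta_j)|^{a_-}.
\]

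Now I would analyze the convergence at each stratum of the boundary. The integrand has potential singularities where $\cos\theta_j=0$, where $\sin\theta_j=0$, or where $\theta_i=\pm\theta_j\pmod \pi$. At $\cos\theta_j=0$ the local contribution behaves as $|\cos\theta_j|^{p\Re(\lambda-\frac12)+c-1}$, which is integrable in one real variable precisely when $p\Re(\lambda-\frac12)+c-1>-1$, i.e. $\Re(\lambda)>\frac12-\frac{c}{p}$. At $\sin\theta_j=0$ the exponent is $b+c-1\ge 0$ since $c\ge 1$, giving no obstruction; likewise the factors $|\sin(\theta_i\pm\theta_j)|^{a_\pm}$ have nonnegative exponents and cause no problems. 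Since the factors in the integrand are nonnegative and essentially decouple near each boundary stratum (the remaining factors being bounded there), Fubini/Tonelli shows that the only obstruction to absolute convergence comes from the factors $|\cos\theta_j|$, yielding the stated necessary and sufficient condition $\Re(\lambda)>\frac12-\frac{c}{p}$.

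The main potential obstacle is making the decoupling argument rigorous when several singular loci intersect (for instance when $\theta_j\to\pi/2$ together with $\theta_i-\theta_j\to 0$). This is handled by a standard partition of unity on $[0,\pi)^r$ subordinate to small neighborhoods of each stratum: on each piece the relevant singular factors separate into products of one-variable (or polar-coordinate) integrals, each of which converges iff the corresponding exponent exceeds $-1$. Since the only negative exponent among the four is the one carried by $|\cos\theta_j|$, the convergence threshold is dictated by that factor alone.
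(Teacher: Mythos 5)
Your argument is correct, but it takes a genuinely different route from the paper. The paper reduces the same torus integral to known special integrals: for $a_+=a_-$ it uses $\sin(\theta_i-\theta_j)\sin(\theta_i+\theta_j)=\cos^2\theta_i-\cos^2\theta_j$ and the substitution $u_j=\cos^2\theta_j$ to land on Selberg's integral $S_r(\alpha,\beta,\gamma)$, whose exact convergence conditions give the threshold; the Euclidean case is handled by $t_j=\tan\theta_j$ and the original Selberg formulation; and the one remaining family $\mathbb R^{p,q}$ (where $a_+\neq a_-$) is treated by a direct local analysis including a polar-coordinate check at the corner $\theta_1=\theta_2=\pi/2$. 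What the paper's method buys is the explicit evaluation of $c(\lambda)$ as a ratio of Gamma factors, which it records; what your method buys is uniformity — no case split on the type of $V$ — and elementarity. I would only point out that your closing paragraph worries about, and then only sketches, a difficulty that is not actually there: since every exponent other than the one on $|\cos\theta_j|$ is nonnegative, the integrand is dominated by a constant times $\prod_j|\cos\theta_j|^{E}$ with $E=p\,\Re(\lambda-\frac12)+c-1$, which is a product of one-variable integrable functions whenever $E>-1$; Tonelli then gives sufficiency with no partition of unity or polar coordinates at all. For necessity you should state explicitly the matching lower bound: if $E\le -1$, restrict to a product region $\{|\theta_1-\frac\pi2|<\epsilon\}\times\prod_{j\ge2}\{|\theta_j-\alpha_j|<\epsilon\}$ with the $\alpha_j$ chosen off all the other singular loci, where every factor except $|\cos\theta_1|^{E}$ is bounded below by a positive constant, so the integral of the nonnegative integrand over that region already diverges. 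With those two sentences made explicit, your proof is complete and arguably cleaner than the three-case Selberg argument.
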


\begin{proof}By use of the integration formula \eqref{integration}, the integral to be checked for convergence is equal to
\[\int_{[0,\pi]^r} \big(\prod_{j=1}^r \cos^2 \theta_j\big)^{\frac{p}{2}(-\frac{1}{2} +\lambda)} \Big\vert\prod_{i=1}^r (\sin 2\theta_j)^{c-1} \prod_{1\leq i<j\leq r} \sin(\theta_i-\theta_j)^{a_+} \dots \]\[\dots\prod_{1\leq i<j\leq r}{\sin(\theta_i+\theta_j)}^{a_-} \prod_{i=1}^r {(\sin \theta_j)}^b\Big\vert\ \, d\theta_1\dots d\theta_r\ .
\]

Let first assume that $a_+ = a_-=\frac{a}{2}$.   Using $\sin (\theta+\varphi) \sin (\theta-\varphi) = \cos^2 \theta -\cos^2 \varphi$, and $\sin 2 \theta =2 \sin \theta \cos \theta$,  the integral $c(\lambda)$ to be checked for convergence is equal to
\[\int_{[0,\pi]^r} \big\vert\prod_{j=1}^r \cos \theta_j\big\vert^{p(-\frac{1}{2} +\lambda) +c-1}\big(\prod_{j=1}^r \sin  \theta_j\big) ^{ b+ c-1} \big\vert\prod_{1\leq i<j\leq r} (\cos^2 \theta_i-\cos^2 \theta_j)\big\vert^{\frac{a}{2}} d\theta_1\dots d\theta_r
\]
The integrand is invariant under the changes $\theta_j\mapsto \pi-\theta_j$, so that we may integrate on $[0,\frac{\pi}{2}]^r$ and use the change of variables $u_j = \cos^2 \theta_j$ to get (up to a constant)
\[\int_{[0,1]^r} \prod_{j=1}^r u_j^{\frac{1}{2}(p(-\frac{1}{2} + \lambda) + c)-1}
\prod_{j=1}^r (1-u_j)^{\frac{1}{2}(c+b)-1} \prod_{1\leq i<j\leq r}\vert u_i-u_j\vert^{\frac{a}{2}} du_1\dots du_r
\]

This integral is a special case of the celebrated \emph{ Selberg's integrals} (see e.g. \cite{a} Theorem 8.1.1)
\[S_r(\alpha, \beta, \gamma) := \int_0^1\dots \int_0^1 \prod_{i=1}^r t_i^{\alpha-1} (1-t_i)^{\beta-1} \prod_{1\leq i<j\leq r} \vert t_i-t_j\vert^{2\gamma} dt_1\dots dt_r\ .
\]
The conditions for absolute convergence are
\[\Re(\alpha) >0,\quad \Re(\beta)>0, \quad \Re(\gamma)> -\min \{\frac{1}{r}, \frac{\Re(\alpha)}{r-1}, \frac{\Re(\beta)}{r-1}\}\ .
\]
In the case at hand, the conditions amount to 
\[\Re \big({\frac{1}{2}(p(-\frac{1}{2} + \lambda) + c)\big)}>0
\]
equivalent to
\[\Re (\lambda) > \frac{1}{2} - \frac{ c}{p}\ .
\]
The constant $c(\lambda) = J_\lambda 1$ plays an important role for further analysis of the intertwining operators and the representations $\rho_\lambda$. Although we won't use the result, it is worth to give the value of $c(\lambda)$ :

\[c(\lambda) = C \prod_{j=1}^r \frac{ \Gamma\big((j-1)\frac{a}{4}-\frac{(r-1)a+b}{4} +\frac{p}{2}\lambda\big)}{\Gamma\big((j-1) \frac{a}{4}+ \frac{b+2c}{4}+\frac{p}{2}\lambda\big)}\ ,
\]
where $C$ is an explicit  constant which depends only on $V$.
\bigskip

Let now $V$ be of Euclidean type, so that $a=a_+, a_- = 0, b=0$, $c=1$, and $p=(r-1)a+2$. The integral to be checked for convergence now reads
\[c(\lambda) = \int_{[0,\pi]^r} \big\vert\prod_{j=1}^r \cos \theta_j\big\vert^{p(-\frac{1}{2} +\lambda) } \big\vert\prod_{1\leq i<j\leq r} (\sin (\theta_i-\theta_j)\big\vert^a d\theta_1\dots d\theta_r\ .
\]
As the function to be integrated is periodic of period $\pi$ w.r.t. to each variable, the integral can be taken over $[-\frac{\pi}{2}, \frac{\pi}{2}]^r$. Then let $t_j = \tan \theta_j$, and use
\[ \sin^2(\theta_i - \theta_j)= \frac{ (t_i-t_j)^2}{(1+t_i^2)(1+t_j^2)}\]
to get
\[c(\lambda) = \int_{-\infty}^{+\infty}\dots\int_{-\infty}^{+\infty} \big(\prod_{j=1}^r (1+t_j^2)\big)^{-\frac{p}{2}(\frac{1}{2} +\lambda)} \big\vert\prod_{1\leq i<j\leq r} (t_i-t_j)\big\vert^a d t_1 d t_2\dots dt_r\ .
\]

This is again a special case for another version of the Selberg integral (in fact it is the original Selberg formulation, see \cite{fw}), which can be seen to converge if an only if $\Re(\lambda)>\frac{1}{2}- \frac{1}{p}$ and is equal to

\[c \ \prod_{j=0}^{r-1} \frac{ \Gamma(p(\frac{1}{2}+\lambda) -1 +(r+j-1)\frac{a}{2})}{\Gamma\big(\frac{p}{2} (\frac{1}{2} +\lambda) -j\frac{a}{2}\big)^2},
\]
where $c$ is an explicit  constant depending only on $V$.
\bigskip

From the classification, it remains now to consider the case of the PJTS $V=\mathbb R^{p,q}$, where $2\leq p< q$. For this case, $r=2, a_+=q-1, a_-=p-1, b=0$ and $c=1$, the genus is equal to $2+(q-1)+(p-1)=p+q=n$. So the integral to be considered is
\[\int_0^\pi\int_0^\pi \vert\sin(\theta_1-\theta_2)\vert^{q-1}\vert \sin(\theta_1+\theta_2)\vert^{p-1} \vert\cos \theta_1 \cos \theta_2\vert^{n(-\frac{1}{2}+\lambda)} d\theta_1 d\theta_2\ .
\]
The integral may become divergent because of the singularities near $\theta_1 = \frac{\pi}{2} $ or near $\theta_2 =  \frac{\pi}{2}$. This  forces the conditions $n(-\frac{1}{2}+\Re(\lambda) > -1$, i.e. $\Re(\lambda) > \frac{1}{2} -\frac{1}{n}$. Now if the two conditions are satisfied, it remains to check the convergence for both $\theta_1$ and $\theta_2$ close to $\frac{\pi}{2}$. Making the change of variables $\varphi_i = \theta_i-\frac{\pi}{2}$ for $i=1,2$, and using $\sin x \simeq x$ for $x$ close to $0$ leads to discussing the convergence near $(0,0)$ of the integral
\[\int \int \vert \varphi_1-\varphi_2\vert^{q-1} \vert \varphi_1+\varphi_2\vert^{p-1} \vert \varphi_1\varphi_2\vert^{n(-\frac{1}{2}+\Re(\lambda))} d\varphi_1 d\varphi_2\ .
\]
Using polar coordinates near $(0,0)$, this requires for convergence the (possibly) supplementary condition that
\[(q-1)+(p-1)+n(-1+2 \Re(\lambda))+1>-1
\]
which amounts to $\Re(\lambda)>0$. Hence the integral converges if and only if $\Re(\lambda) > \frac{1}{2} -\frac{1}{n}$.
\end{proof}

\section{The complex fundamental kernel for a PHJTS}

Let $\mathbb V$ be a simple PHJTS. For the bounded domain associated to $\mathbb V$, it is well known that the Bergman kernel is another polynomial raised at the power $g$, where $g$ is the genus of $\mathbb V$ (see e.g. \cite{fkklr} Part V Proposition VI.3.6). Similarly in our setting, by changing $y$ to $-y$, the complex canonical kernel $k(x,y)$ is equal to $h(x,y)^g$, where $h(x,y)$ is another polynomial on $\mathbb V \times \mathbb V$ and $g$ is genus of $\mathbb V$. We introduce the polynomial $h$ via the (complex) generic minimal polynomial, although there is a more direct way (see \cite{fkklr} Part III). But this approach will be useful in the the next section, where we will prove a similar result is for  the canonical kernel of a PJTS. This is an important result, specially towards proving a Bernstein-Sato identity. 

First recall the theory of the \emph{generic minimal polynomial} on a PJTS. Let $\mathbb K = \mathbb R$ or $\mathbb C$. Let $V$ be a PJTS (if $\mathbb K = \mathbb R$) or a PHJTS (if $\mathbb K= \mathbb C$). Let $y$ be in $V$, and define the Jordan product $x._y z$ on $V$ to be 
\[ x._yz = \{ x,y,z\}\ .\]
The space $V$ with this product is a Jordan algebra over $\mathbb K$, denoted by $V^{(y)}$.
The successive powers of the element $x$ for this Jordan product are denoted by $x^{(2,y)}, \dots, x^{(k,y)},\dots$. More generally, if $p$ is a polynomial with vanishing constant term, then $p(x)$ is well defined in $V^{(y)}$. Let 
$\rho_y(x) $ be the supremum of the values of $k$ for which $x,x^{(2,y)},\dots, x^{(k,y)}$ are linearly independant, and let $\rho = \sup_{(x,y)\in V} \rho_y(x)$ ($\rho$ is called the \emph{absolute rank} of $V$). For $x,y\in V\times V$, and $k$ an integer,  let $\beta_k(x,y)$ be the element of the exterior tensor product $\Lambda^k V$ defined by
\begin{equation}\label{betak}
\beta_k(x,y) = x\wedge x^{(2,y)} \wedge \dots \wedge x^{(k,y)}\ .
\end{equation}
Clearly $\beta_k$ is polynomial in $x$ and $y$. For $k\geq \rho+1$, $\beta_k$ vanishes identically on $ V\times  V$, whereas $\beta_\rho$ does not vanish identically on $V\times V$. A couple $(x,y)\in V\times V$ is said to be \emph{regular} if $\beta_\rho(x,y)\neq 0$.  Regulars elements form a Zariski open dense set in $V\times V$. For $(x,y)$ regular, let $m(T)=m(T,x,y)$ be the monic generator of the ideal in $\mathbb K[T]$ of polynomials $p$ such that  the polynomial $Tp(T)$ annihilates $x$ in $V^{(y)}$.

\begin{proposition} For $1\leq j\leq \rho$, there exist (unique) polynomials $m_j$ on $\mathbb K\times  V\times V$ such that, for any regular pair $(x,y)$ in $V\times V$
\begin{equation}\label{mmj}
m(T,x,y) = T^\rho +\sum_{j=1}^\rho (-1)^j m_j(x,y) T^{\rho-j}\ .
\end{equation}

The polynomials $m_j$ are homogeneous of bidegree $(j,j)$ in $(x,y)$. \end{proposition}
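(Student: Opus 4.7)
\emph{Uniqueness} is immediate: the regular set is Zariski open and dense in $V\times V$, so any two polynomials on $V\times V$ which agree with the coefficients of $m(T,x,y)$ at every regular pair must agree as polynomials.

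For \emph{existence} I would argue by Cramer's rule in the exterior algebra. Since $\beta_{\rho+1}$ vanishes identically on $V\times V$ by maximality of $\rho$, on the regular set the vector $x^{(\rho+1,y)}$ lies in the span of $x, x^{(2,y)},\dots,x^{(\rho,y)}$. Wedging the defining relation $x^{(\rho+1,y)}+\sum_{j=1}^{\rho}(-1)^j m_j(x,y)\, x^{(\rho+1-j,y)}=0$ with the remaining $\rho-1$ powers $x^{(k,y)}$, $k\neq \rho+1-j$, yields (up to a sign which a direct check shows to be $+1$)
\[
m_j(x,y)\,\beta_\rho(x,y) \;=\; x\wedge x^{(2,y)}\wedge\cdots\wedge \widehat{x^{(\rho+1-j,y)}}\wedge\cdots\wedge x^{(\rho,y)}\wedge x^{(\rho+1,y)}
\]
as elements of $\Lambda^\rho V$. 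Both sides are polynomial maps $V\times V\to\Lambda^\rho V$; choosing any linear coordinate $\omega_I$ on $\Lambda^\rho V$ for which $\omega_I\circ\beta_\rho\not\equiv 0$ therefore realises $m_j$ as a rational function on $V\times V$.

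The step I expect to be the main obstacle is promoting rationality to polynomiality. After fixing a basis of $V$, the coordinate ring of $V\times V$ is a unique factorisation domain; writing $m_j$ in lowest terms, its denominator must divide $\omega_I\circ\beta_\rho$ for every multi-index $I$, because the displayed formula is intrinsic in $m_j$ and the ratio is the \emph{same} function of $(x,y)$ independently of $I$. An argument that the family $\{\omega_I\circ\beta_\rho\}_I$ has no nontrivial common irreducible factor---equivalently, that no hypersurface of $V\times V$ is contained in the indeterminacy locus $\{\beta_\rho=0\}$---then forces the denominator to be a unit, so $m_j$ is polynomial. Alternatively one may invoke Loos's general construction of the generic minimal polynomial for Jordan pairs \cite{l3}, which packages this polynomiality statement directly and works uniformly in $\mathbb K=\mathbb R$ or $\mathbb C$.

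Finally, bihomogeneity is a scaling verification. Trilinearity of the Jordan triple product gives, by an easy induction on $k$, that $x^{(k,y)}$ has bidegree $(k,k-1)$ in $(x,y)$. Substituting $(tx,sy)$ for $(x,y)$ into the minimal-polynomial relation on the regular set, dividing through by $t^{\rho+1}s^{\rho}$, and invoking uniqueness yields $m_j(tx,sy)=(ts)^j\,m_j(x,y)$, so $m_j$ is bihomogeneous of bidegree $(j,j)$, as claimed.
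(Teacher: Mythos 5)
The paper does not actually prove this proposition: it simply cites Loos \cite{l3} and \cite{fkklr}, Part V, Proposition IV.3.1, and records the wedge identity \eqref{mj} as the characterization of $m_j$. Your Cramer-type derivation of
\[
m_j(x,y)\,\beta_\rho(x,y)=x\wedge\cdots\wedge \widehat{x^{(\rho+1-j,y)}}\wedge\cdots\wedge x^{(\rho+1,y)}
\]
is exactly that identity (and your sign check is right), your uniqueness argument via Zariski density of the regular set is fine, and the bihomogeneity computation from $x^{(k,y)}$ having bidegree $(k,k-1)$ is correct. So in outline you are following the same construction the paper points to, and in more detail than the paper gives.

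However, the step you yourself flag as the obstacle — polynomiality — is a genuine gap as you propose to close it. Your criterion, that the coordinates $\omega_I\circ\beta_\rho$ have no common irreducible factor (equivalently that the non-regular locus contains no hypersurface), is false in general. Already for $V=\mathbb R$ with $\{x,y,z\}=xyz$ one has $\rho=1$, $\beta_1(x,y)=x$, so the non-regular locus $\{x=0\}$ \emph{is} a hypersurface and the gcd of the coordinates of $\beta_\rho$ is $x$, not a unit; yet $m_1(x,y)=x^{(2,y)}/x=x^2y/x=xy$ is polynomial because the \emph{numerator} happens to be divisible by the offending factor. So polynomiality cannot be deduced from a general-position statement about $\beta_\rho$; it requires a Jordan-theoretic argument (divisibility of the numerator wedges, or Loos's construction via the quasi-inverse and generic norm). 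Your fallback of invoking \cite{l3} is therefore not optional but necessary — and it is precisely what the paper does.
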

See \cite {l3}, \cite{fkklr}, part V, Proposition IV.3.1. 
For further reference, note that $m_j$ is characterized by the relation
\begin{equation}\label{mj}
x\wedge x^{(2,y)}\wedge\dots \wedge \widehat{x^{(\rho-j+1,y)}}\wedge \dots \wedge (x^{(\rho+1,y)}= m_j(x,y) \beta_\rho(x,y)\ .\end{equation}
see (\cite {l3} section 16, or \cite{fkklr} Part V, section IV.3).

As a consequence, the right handside of \eqref{mmj} defines a polynomial on $\mathbb K \times V\times V$, still denoted by $m(T,x,y)$ and called the \emph{generic minimal polynomial} of $V$ over $\mathbb K$.

The following property is an obvious consequence of the construction of the minimal polynomial.
\begin{proposition} Let $g\in Str(V)$. Then, for all $x,y\in V$,
\[ m(T,gx,\sigma(g)y) = m(T,x,y)\ ,
\] 
where $\sigma (g) = {(g^t)}^{-1}$ if $\mathbb K = \mathbb R$ and $\sigma(g) = {(g^*)}^{-1}$ if $\mathbb K = \mathbb C$.
\end{proposition}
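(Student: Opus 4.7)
The plan is to exploit the defining property of the structure group, namely that $g\{x,y,z\}=\{gx,\sigma(g)y,gz\}$ for all $g\in Str(V)$ and $x,y,z\in V$, to show that $g$ intertwines the $y$-Jordan product with the $\sigma(g)y$-Jordan product. More precisely, for fixed $y$ and the Jordan algebra $V^{(y)}$ with product $x._y z=\{x,y,z\}$, the map $g:V^{(y)}\longrightarrow V^{(\sigma(g)y)}$ is a Jordan algebra isomorphism, since
\[
g(x._y z)=g\{x,y,z\}=\{gx,\sigma(g)y,gz\}=(gx)._{\sigma(g)y}(gz).
\]

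From this, a straightforward induction on $k$ yields $g(x^{(k,y)})=(gx)^{(k,\sigma(g)y)}$ for every integer $k\ge 1$. Consequently, applying the exterior power $\Lambda^k g$ to the vector $\beta_k(x,y)=x\wedge x^{(2,y)}\wedge\dots\wedge x^{(k,y)}$ gives
\[
(\Lambda^k g)\,\beta_k(x,y)=\beta_k(gx,\sigma(g)y),
\]
so regularity is preserved: $(x,y)$ is regular iff $(gx,\sigma(g)y)$ is regular, and the same isomorphism sends the relation \eqref{mj} characterizing $m_j(x,y)$ to the analogous relation for $m_j(gx,\sigma(g)y)$. Comparing coefficients of $\beta_\rho$ (after applying $\Lambda^\rho g$, which is invertible) gives $m_j(gx,\sigma(g)y)=m_j(x,y)$ on the Zariski-dense set of regular pairs.

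Finally, since both sides of the claimed identity are polynomials in $(x,y)$ (with coefficients depending polynomially on $T$) and they agree on a Zariski-dense open set, they agree everywhere on $\mathbb K\times V\times V$. The only subtlety to watch for is purely bookkeeping, namely that $\sigma$ is an involutive anti-homomorphism on $Str(V)$ so that the induction step $g(x^{(k+1,y)})=g\{x^{(k,y)},y,x\}=\{gx^{(k,y)},\sigma(g)y,gx\}=(gx)^{(k+1,\sigma(g)y)}$ proceeds cleanly; no genuine obstacle is expected, as the statement is essentially a functoriality remark for the generic minimal polynomial under the natural $Str(V)$-action on the pair $(V,V)$ twisted by $\sigma$.
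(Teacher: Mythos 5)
Your proof is correct and is exactly the argument the paper has in mind: the paper merely states that the proposition is ``an obvious consequence of the construction of the minimal polynomial,'' and your write-up supplies precisely the intended details (equivariance of the powers $x^{(k,y)}$, hence of $\beta_k$, preservation of regularity, and the characterization \eqref{mj} of the $m_j$, followed by Zariski density). One trivial slip in your closing remark: $\sigma(g)=(g^t)^{-1}$ is a group \emph{homomorphism} (the composite of two anti-homomorphisms), not an anti-homomorphism, though nothing in your induction actually uses this.
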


Let us consider more closely the case where $\mathbb K = \mathbb C$ and $\mathbb V$ is a PHJTS.  

\begin{proposition}\label{minpol}
 Let $\mathbb V$ be a PHJTS of rank $r$. 
 
 $i)$ The absolute rank $\rho$ is equal to $r$. 
 
 $ii)$  The generic minimal polynomial $m(T,x,y)$ (over $\mathbb C$) is holomorphic in $x$ and antiholomorphic in $y$.
 
$iii)$ Let  $(c_1,c_2,\dots, c_r)$ be a Jordan frame of $\mathbb V$, and let $x=\sum_{j=1}^r t_j c_j$. Then
\[ m(T,x,x) = \prod_{j=1}^r (T-t_j^2)\ ,\qquad m_k(x,x) = \sum_{i_1<i_2\dots <i_k} t_{i_1}^2\dots t_{i_k}^2\ .\]
\end{proposition}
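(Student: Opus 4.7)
The plan is to establish (ii) and (iii) by direct calculation and then deduce (i) from them.

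For (ii), the key point is structural: in a PHJTS the triple product $\{x,y,z\}$ is $\mathbb C$-linear in $x,z$ and conjugate linear in $y$. Hence for each fixed $y$ the $y$-powers $x^{(k,y)}$ are polynomials in $x$ and $\bar y$ that are holomorphic of total degree $k$ in $x$ and antiholomorphic of total degree $k-1$ in $y$. Consequently the wedge products $\beta_k(x,y)$ of \eqref{betak} share this property, and so does the left-hand side of the Cramer-type identity \eqref{mj}. On the Zariski-open set where $\beta_r\neq 0$ (non-empty by (iii) below) one may solve \eqref{mj} for $m_j(x,y)$ and obtain a manifestly holomorphic/antiholomorphic expression; since $m_j$ is known to be polynomial on all of $\mathbb V\times\mathbb V$, no unwanted monomial can appear, which is (ii).

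For (iii), fix the Jordan frame $(c_1,\dots,c_r)$ and write $x=\sum_j t_j c_j$. Primitive orthogonal tripotents give $L(c_i,c_j)=0$ for $i\neq j$, and the Peirce rule $c_k\in V(c_i,0)$ for $k\neq i$ together with $\{c_i,c_i,c_i\}=c_i$ yields
\[\{c_i,c_j,c_k\}=\delta_{ij}\delta_{jk}\,c_i.\]
Trilinear expansion gives $\{x,x,x\}=\sum_j t_j^3 c_j$, and a routine induction
\[x^{(n+1,x)}=\{x^{(n,x)},x,x\}=\sum_j t_j^{2n+1}c_j\]
produces $x^{(n,x)}=\sum_j t_j^{2n-1}c_j$ for all $n\ge 1$. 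A polynomial $q(T)=T\,p(T)$ with no constant term then annihilates $x$ in $V^{(x)}$ iff $\sum_n q_n t_j^{2n-1}=0$ for each $j$, i.e.\ iff $q(t_j^2)=0$ for all $j$ with $t_j\neq 0$. For generic $x$ the $t_j^2$ are distinct and nonzero, so the minimal such $q$ is $T\prod_j(T-t_j^2)$ and hence $m(T,x,x)=\prod_j(T-t_j^2)$. Expanding and matching with \eqref{mmj} identifies $m_k(x,x)$ with the $k$-th elementary symmetric function in the $t_j^2$.

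Part (i) is then within reach. The vectors $x,x^{(2,x)},\dots,x^{(r,x)}$ extracted from (iii) are linearly independent for generic $t_j$ (the coefficient matrix $(t_j^{2n-1})_{j,n}$ equals $\bigl(\prod_j t_j\bigr)$ times a Vandermonde in the $t_j^2$), giving $\rho_x(x)=r$ at such points and so $\rho\ge r$. The reverse inequality $\rho\le r$ is the principal obstacle. The cleanest route is to repeat the induction with a second flat element $y=\sum_j s_j c_j$: the same argument gives $x^{(n,y)}=\sum_j t_j^n s_j^{n-1}c_j$, which stays in the $r$-dimensional subspace $\mathbb A=\bigoplus_j \mathbb C c_j$. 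Thus $\beta_{r+1}(x,y)=0$ on the bi-flat $\mathbb A\times\mathbb A$. Combining this with the $Str(\mathbb V)$-covariance
\[\beta_{r+1}(gx,\sigma(g)y)=\Lambda^{r+1}g\cdot\beta_{r+1}(x,y),\qquad g\in Str(\mathbb V),\]
and the density of the $Str(\mathbb V)$-orbit of $\mathbb A\times\mathbb A$ under the diagonal action $(x,y)\mapsto(gx,\sigma(g)y)$ in $\mathbb V\times\mathbb V$ (a standard spectral-theoretic fact, see \cite{l3}), forces the polynomial $\beta_{r+1}$ to vanish identically, whence $\rho\le r$ and so $\rho=r$.
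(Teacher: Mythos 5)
The paper does not prove this proposition itself (it cites \cite{fkklr}, Part V, Proposition VI.2.6), so your argument has to stand on its own. Parts (ii) and (iii) are essentially correct: the sesquiholomorphy of $\beta_k$ plus Cramer's rule applied to \eqref{mj} gives (ii), and your computation of the powers $x^{(n,x)}=\sum_j t_j^{2n-1}c_j$ on the flat (with the $t_j$ real, as the statement intends) gives (iii); the same Vandermonde determinant gives $\rho\ge r$ in (i). Two cosmetic remarks: in the two-variable formula the middle slot is conjugate-linear, so $x^{(n,y)}=\sum_j t_j^n\,\overline{s_j}^{\,n-1}c_j$; and in (ii) you should invoke $\beta_\rho\neq 0$ (true by definition of $\rho$) rather than $\beta_r\neq 0$, to avoid leaning on (i).

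The genuine gap is the upper bound $\rho\le r$. You deduce it from the claim that the twisted diagonal orbit $\{(gx,\sigma(g)y):g\in Str(\mathbb V),\ x,y\in\mathbb A\}$ is dense in $\mathbb V\times\mathbb V$, which you call ``a standard spectral-theoretic fact.'' It is not: the spectral theorem for a PHJTS diagonalizes a \emph{single} element, whereas your claim is a simultaneous reduction of a \emph{pair} to a common frame. For the real flat $A$ the claim is simply false (already for $\mathbb V=\mathbb C$ the orbit of $A\times A$ is the real hypersurface $\{x\bar y\in\mathbb R\}$); for the complexified flat $\mathbb A$ it amounts to generic simultaneous diagonalizability of pairs (for $\mathbb V=\mathbb C^{r\times s}$, to diagonalizability of $xy^*$), a nontrivial assertion that is at least as deep as what you are trying to prove and which you do not establish. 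The standard and much shorter route — the one this paper itself uses repeatedly (e.g.\ in the uniqueness proof for $h$ and in Proposition \ref{minC}) — is polarization: $\beta_{r+1}(x,y)$ is holomorphic in $x$ and antiholomorphic in $y$, hence it is determined by its restriction to the totally real diagonal $\{(x,x)\}$, and there your one-variable computation shows all powers $x^{(k,x)}$ lie in the $r$-dimensional real span of a frame adapted to $x$, so $\beta_{r+1}(x,x)=0$ and therefore $\beta_{r+1}\equiv 0$. Replacing the density claim by this polarization argument closes the gap.
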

See \cite{fkklr} Part V, proposition VI.2.6.

Define now the \emph{complex fundamental kernel} of $\mathbb V$ to be the polynomial $h$ on $\mathbb V\times \mathbb V$ defined by
\[h(x,y) = m(1,x,-y)\ .
\]
The polynomials $m_k$ are called the \emph{fundamental invariant polynomials} of $\mathbb V$. Notice the following relations

\begin{equation}\label{mh}
m(T,x,y) = T^rh(T^{-1} x,-y),\quad h(x,y) =\sum_0^r m_k(x,y),
\end{equation}
where $m_0\equiv 1$.

\begin{proposition} Let $\mathbb V$ be a PHJTS of rank $r$ and let $(c_1,c_2,\dots, c_r)$ be a Jordan frame of $\mathbb V$. The polynomial $h$ is uniquely determined by the following property :

$i)$ $h$ is holomorphic in $x$, antiholomorphic in $y$

$ii)$ $h(kx,ky) = h(x,y)$ for any $k\in Aut(\mathbb V)$ and $x,y\in \mathbb V$

$iii)$ for $x=\sum_{j=1} t_j c_j$, 
\[h(x,x) = \prod_{j=1}^r (1+t_j^2)\ .
\]

$iv)$ similar statements hold for $m_k$, $1\leq k\leq r$.

\end{proposition}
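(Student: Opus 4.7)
\emph{Plan and verification of (i)--(iii).} The polynomial $h(x,y)=m(1,x,-y)$ defined in the previous paragraph visibly satisfies all three properties, and the substance of the proposition is the uniqueness statement together with the extension (iv). Property (i) is immediate from Proposition~\ref{minpol}(ii): $m(T,x,y)$ is holomorphic in $x$ and antiholomorphic in $y$, and the substitution $y\mapsto -y$ is $\mathbb C$-linear and therefore preserves this bi-analytic type. For (ii) I would apply the general covariance $m(T,gx,\sigma(g)y)=m(T,x,y)$ on the subgroup $\Aut(\mathbb V)=\Str(\mathbb V)\cap U(\mathbb V)$, where unitarity forces $\sigma(g)=(g^{*})^{-1}=g$, whence $h(gx,gy)=m(1,gx,g(-y))=m(1,x,-y)=h(x,y)$. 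Property (iii) follows from $h=\sum_{k=0}^r m_k$ (relation \eqref{mh}) combined with $m_k(x,x)=e_k(t_1^2,\ldots,t_r^2)$ from Proposition~\ref{minpol}(iii), which sums to $\prod_{j=1}^r(1+t_j^2)$.

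\emph{Uniqueness.} Suppose $h'$ is another polynomial satisfying (i)--(iii), and set $\Delta=h-h'$. Then $\Delta$ is holomorphic in $x$ and antiholomorphic in $y$, is $\Aut(\mathbb V)$-invariant, and satisfies $\Delta(a,a)=0$ for every $a$ in the real flat $A=\bigoplus_{j=1}^r\mathbb R\,c_j$. I would then invoke the spectral theorem for PHJTS: every $x\in\mathbb V$ can be written $x=\sum_j\mu_j e_j$ with $\mu_j\in\mathbb R_{\ge 0}$ and $(e_j)$ a family of mutually orthogonal tripotents, which may be completed to a Jordan frame. By the transitivity (up to permutations and signs) of $\Aut(\mathbb V)$ on Jordan frames, one finds $k\in\Aut(\mathbb V)$ with $kc_j=e_j$; hence $x=k\cdot a$ for some $a\in A$, and $\Delta(x,x)=\Delta(a,a)=0$ for \emph{every} $x\in\mathbb V$. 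The final polarization step regards $\Delta$ as a holomorphic polynomial on $\mathbb V\times\overline{\mathbb V}$ (with $\overline{\mathbb V}$ carrying the conjugate complex structure); the antidiagonal $\{(x,x):x\in\mathbb V\}$ is a totally real submanifold of maximal real dimension in $\mathbb V\times\overline{\mathbb V}$, and a holomorphic polynomial that vanishes on such a submanifold vanishes identically, so $\Delta\equiv 0$.

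\emph{Extension (iv) and main difficulty.} The same three-step proof works verbatim for each $m_k$: it is holomorphic in $x$ and antiholomorphic in $y$ of bidegree $(k,k)$, it is $\Aut(\mathbb V)$-invariant by the same restriction of the $\Str(\mathbb V)$-covariance, and on the diagonal of $A$ it equals $e_k(t_1^2,\ldots,t_r^2)$, so the uniqueness argument transports without change. The only non-routine point in the whole plan is the polarization step; it rests on the standard principle that a holomorphic polynomial on a complex vector space is determined by its restriction to a maximally totally real subspace, which I would cite rather than re-prove. The spectral reduction to the flat $A$ is classical for PHJTS and is already implicit in the spectral material recalled in Section~2.
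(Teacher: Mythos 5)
Your proposal is correct and follows essentially the same route as the paper: verify (i)--(iii) directly from the covariance and homogeneity of the $m_k$, then for uniqueness reduce to the diagonal $\{(x,x)\}$ by holomorphy/antiholomorphy and to the maximal flat $A$ by the fact that every $\Aut(\mathbb V)$-orbit meets $A$. You merely spell out the two steps the paper leaves implicit (the spectral reduction to $A$ and the totally-real polarization argument), which is fine.
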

\begin{proof} Clearly $h$ satisfies $i)$ and $ii)$. As $m_j$ is homogeneous of degree $j$ in $y$,  $h$ satisfies $iii)$. Suppose now that $\widetilde h$ is a polynomial wihch satisfies the three properties. As $h$ and $\widetilde h$ are holomorphic in $x$ and antiholomorphic in $y$, it suffices to show that they coincide on the diagonal $\{(x,x), x\in \mathbb V\}$. Any orbit under $Aut(\mathbb V)$ in $\mathbb V$ meets the maximal flat $A =\oplus_{j=1}^r \mathbb R c_j$. As $h(x,x)$ and $\widetilde h(x,x)$ coincide on $A$, they coincide everywhere on $\mathbb V$.\end{proof}

For later reference, let us state the following result.

\begin{proposition} Let $(c_1,c_2,\dots, c_r)$ be a Jordan frame of $\mathbb V$. Let $x=\sum_{j=1} x_jc_j$ and $y=\sum_{j=1}^ry_jc_j$, where $x_j,y_j\in \mathbb C$. Then
\begin{equation}\label{hdouble}
h(x,y) = \prod_{j=1}^r (1+x_j\overline {y_j})\ .
\end{equation}
\end{proposition}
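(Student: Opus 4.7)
The plan is to compute each fundamental invariant polynomial $m_k$ on the complex flat $A_{\mathbb C}:=\bigoplus_{j=1}^r \mathbb C c_j$ and then assemble $h$ via the identity $h=\sum_{k=0}^r m_k$ from \eqref{mh}. By Proposition \ref{minpol}(ii), the polynomial $m_k(x,y)$ is holomorphic of degree $k$ in $x$ and antiholomorphic of degree $k$ in $y$; hence for $x=\sum x_jc_j$ and $y=\sum y_jc_j$ in $A_{\mathbb C}$, the restriction $m_k\vert_{A_{\mathbb C}\times A_{\mathbb C}}$ is a polynomial in $(x_1,\dots,x_r,\bar y_1,\dots,\bar y_r)$ of bidegree $(k,k)$. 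The goal is to show that it equals $e_k(x_1\bar y_1,\dots,x_r\bar y_r)$, the $k$-th elementary symmetric polynomial in the products $u_l:=x_l\bar y_l$; summing over $k$ will then give $\prod_l(1+x_l\bar y_l)$.

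To pin down $m_k\vert_{A_{\mathbb C}^2}$, I would exploit the $\Aut(\mathbb V)$-invariance $m_k(gx,gy)=m_k(x,y)$ (which follows from the structure-group covariance of $m$ stated just before Proposition \ref{minpol}, since $\sigma(g)=g$ for unitary $g\in\Aut(\mathbb V)$), using two families of automorphisms that preserve $A_{\mathbb C}$. First, for $\theta=(\theta_1,\dots,\theta_r)\in\mathbb R^r$, I would define $k_\theta$ to act on the Peirce component $V_{ij}$ by the scalar $e^{i(\theta_i+\theta_j)/2}$, with the convention $\theta_0=0$ on $V_{i0}$. The Peirce product rule $\{V_{ij},V_{jk},V_{kl}\}\subset V_{il}$, combined with the conjugate-linearity of the triple product in its middle argument, yields the telescoping identity $e^{i[(\theta_i+\theta_j)-(\theta_j+\theta_k)+(\theta_k+\theta_l)]/2}=e^{i(\theta_i+\theta_l)/2}$, so $k_\theta\in\Aut(\mathbb V)$ and it acts on $A_{\mathbb C}$ by $c_j\mapsto e^{i\theta_j}c_j$. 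Invariance of $m_k\vert_{A_{\mathbb C}^2}$ under $k_\theta$ then forces every monomial $\prod_l x_l^{a_l}\bar y_l^{b_l}$ to satisfy $a_l=b_l$ for all $l$, so $m_k\vert_{A_{\mathbb C}^2}$ is actually a polynomial of degree $k$ in $u_1,\dots,u_r$. Second, by the proposition on conjugacy of Jordan frames, every permutation of $(c_1,\dots,c_r)$ is induced by some element of $\Aut(\mathbb V)$, and invariance under these makes the polynomial symmetric in $u_1,\dots,u_r$.

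Finally, specializing to $y=x=\sum t_jc_j$ with $t_j\in\mathbb R$ gives $u_l=t_l^2$, and Proposition \ref{minpol}(iii) furnishes the value $m_k(x,x)=e_k(t_1^2,\dots,t_r^2)$. Since the image of the squaring map $\mathbb R^r\to\mathbb C^r$ is Zariski dense, the symmetric polynomial under consideration must coincide with $e_k$ identically, yielding $m_k(x,y)=e_k(x_1\bar y_1,\dots,x_r\bar y_r)$. Summing over $k=0,\dots,r$ then produces $h(x,y)=\prod_{j=1}^r(1+x_j\bar y_j)$, which is the claimed formula \eqref{hdouble}. The only non-routine step is the construction of the toral automorphism $k_\theta$: the half-sum phase $(\theta_i+\theta_j)/2$ is chosen precisely so that the additive cocycle imposed by the Peirce rule (tempered by the conjugate-linearity in the middle slot) is satisfied; once this is verified, the rest of the argument is formal.
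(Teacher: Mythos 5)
Your proof is correct, but it takes a different route from the paper's. The paper restricts $h$ to $\mathbb A\times\mathbb A$ with $\mathbb A=\oplus_j\mathbb C c_j$, observes that a function holomorphic in $x$ and antiholomorphic in $y$ is determined by its values on the diagonal $x=y$, and then covers the whole diagonal by noting that $(e^{i\theta_1}c_1,\dots,e^{i\theta_r}c_r)$ is again a Jordan frame, so the known formula $h(x,x)=\prod(1+t_j^2)$ applies to $x=\sum t_je^{i\theta_j}c_j$ and yields $\prod(1+z_j\bar z_j)$ for arbitrary complex coordinates. You instead work coefficient-by-coefficient with the $m_k$ via $h=\sum_k m_k$, build an explicit torus $k_\theta\subset\Aut(\mathbb V)$ (your cocycle verification of the phases $e^{i(\theta_i+\theta_j)/2}$ on the Peirce spaces is correct, using conjugate-linearity in the middle slot), deduce that $m_k\vert_{A_{\mathbb C}^2}$ depends only on $u_l=x_l\bar y_l$, and then pin it down as $e_k$ from the real diagonal values plus Zariski density. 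The two arguments are cousins — your torus invariance is doing the work that polarization-plus-frame-rotation does in the paper — but yours is more constructive and isolates the finer statement $m_k(x,y)=e_k(x_1\bar y_1,\dots,x_r\bar y_r)$, which is stronger than \eqref{hdouble} itself. Two small remarks: your symmetrization step is superfluous (the Zariski-density identification with $e_k$ does not require knowing in advance that the polynomial is symmetric in the $u_l$), which is just as well since the cited conjugacy proposition only asserts that two frames are conjugate \emph{up to} order and signs, not that every permutation of a fixed frame is realized by an automorphism; and the invariance $m_k(gx,gy)=m_k(x,y)$ for $g\in\Aut(\mathbb V)$ does follow as you say from $m(T,gx,\sigma(g)y)=m(T,x,y)$ with $\sigma(g)=(g^*)^{-1}=g$ for unitary $g$.
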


\begin{proof} Let $\mathbb A= \oplus_{j=1}^r \mathbb C c_j$ be the complexification of the maximal flat subspace $A= \oplus_{j=1}^r \mathbb  R c_j$. The restriction of $h$ to $\mathbb A \times \mathbb A $ is holomorphic in the first variable and antiholomorphic in the second. The same property is true for the right handside of \eqref{hdouble}. Hence it suffices to verify the equality for $x=y$. For $1\leq j\leq r$, let $e^{i\theta_j}$ be any complex number of modulus $1$. Then $(e^{i\theta_1}c_1, e^{i\theta_2} c_2, \dots, e^{i\theta_r} c_r)$ is a Jordan frame of $\mathbb V$. For all $t_1,t_2,\dots, t_r\in \mathbb R$,
\[ h( \sum_{j=1}^r t_je^{i\theta_j} c_j,  \sum_{j=1}^r t_je^{i\theta_j} c_j) = = \prod_{j=1}^r (1+t_j^2)= \prod_{j=1}^r \big(1+(t_je^{\theta_j})(\overline {t_je^{\theta_j}})\big)\ .
\]
As the $e^{i\theta_j}$ are arbitrary complex numbers of modulus $1$, the equality \eqref{hdouble} is true for $x=y\in \mathbb A$.
\end{proof}

Let $\mathbb V$ be a simple PHJTS. With respect to some Jordan frame $(c_1,c_2,\dots, c_r)$, the Peirce decomposition (over $\mathbb C$) reads
\[\mathbb V = \bigoplus_{j=1}^r \mathbb C c_j \bigoplus_{1\leq i<j\leq r} \mathbb V_{ij}\bigoplus_{j=1}^r \mathbb V_{0j} \ .
\]
Then define the characteristic numbers of $\mathbb V$  as
\[a_\mathbb C ={ \dim}_\mathbb C \mathbb V_{ij}, \qquad  b_\mathbb C={\dim}_\mathbb C \mathbb V_{0j}\ ,
\]
and let the complex genus of $\mathbb V$ be $g= a_\mathbb C(r-1) + b_\mathbb C+2$.
\begin{proposition}\label{powerg}
 Let $\mathbb V$ be a simple PHJTS, of rank $r$ and genus $g$. The complex canonical kernel $c(x,y)$ satisfies the identity
\[c(x,y) = h(x,y)^g\ .\]
\end{proposition}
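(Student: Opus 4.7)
The plan is to establish the identity first on a real diagonal of a Cartan flat, then extend it to the full diagonal of $\mathbb{V}$ via $\Aut(\mathbb{V})$-invariance, and finally propagate it off the diagonal by analyticity.

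First, fix a Jordan frame $(c_1,\dots,c_r)$ of $\mathbb{V}$ and consider $x=\sum_{j=1}^{r} t_j c_j$ with $t_j\in\mathbb{R}$. The identities $L(x,x)=\sum_j t_j^2 L(c_j,c_j)$ and $Q(x)y_{ij}=t_i t_j\,Q(c)\,y_{ij}$ (for $c=\sum c_j$) are pure Jordan-triple identities insensitive to the base field, so the computation made in the real case goes through verbatim in the PHJTS setting: the dual Bergman operator $C(x,x)$ acts on the complex Peirce piece $\mathbb{V}_{ij}$ by the scalar $(1+t_i^2)(1+t_j^2)$, with the convention $t_0=0$. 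Using $\dim_{\mathbb{C}}\mathbb{C}c_j=1$, $\dim_{\mathbb{C}}\mathbb{V}_{ij}=a_{\mathbb{C}}$ for $1\le i<j\le r$, and $\dim_{\mathbb{C}}\mathbb{V}_{0j}=b_{\mathbb{C}}$, the exponent of each factor $(1+t_j^2)$ in the $\mathbb{C}$-determinant is $2+(j-1)a_{\mathbb{C}}+(r-j)a_{\mathbb{C}}+b_{\mathbb{C}}=(r-1)a_{\mathbb{C}}+b_{\mathbb{C}}+2=g$, giving
\[
c(x,x)={\det}_{\mathbb{C}}C(x,x)=\prod_{j=1}^r(1+t_j^2)^g.
\]
Together with the formula $h(x,x)=\prod_j(1+t_j^2)$ from the characterization of $h$, this yields $c(x,x)=h(x,x)^g$ for every $x$ in the real flat $A=\bigoplus_j \mathbb{R} c_j$.

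Next, both $c$ and $h$ are invariant under the diagonal action of $\Aut(\mathbb{V})$. For $h$ this is part of its intrinsic characterization; for $c$, any $k\in\Aut(\mathbb{V})$ satisfies $L(kx,ky)=k\,L(x,y)\,k^{-1}$ and $Q(kx)=k\,Q(x)\,k^{-1}$, hence $C(kx,ky)=k\,C(x,y)\,k^{-1}$, which is a conjugation and thus preserves the complex determinant. By the spectral theorem for PHJTS, every $x\in\mathbb{V}$ is $\Aut(\mathbb{V})$-conjugate to an element of $A$, so the identity $c(x,x)=h(x,x)^g$ propagates to the full diagonal $\{(x,x):x\in\mathbb{V}\}$.

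Finally, both $c(x,y)$ and $h(x,y)^g$ are polynomial in $(x,\bar y)$: for $c$ this is clear from $C(x,y)=\id+2L(x,y)+Q(x)Q(y)$ and the $\mathbb{C}$-(anti)linearities of $L$ and $Q$, and for $h$ it is the holomorphicity assertion of Proposition~\ref{minpol}. Their difference $F(x,y)=c(x,y)-h(x,y)^g$ is therefore a polynomial in $(x,\bar y)$ vanishing on the antidiagonal $\{y=x\}$. Writing $F(x,\bar y)=\sum a_{\alpha\beta}\,x^\alpha\bar y^\beta$, the restriction $F(x,\bar x)$ is a polynomial in the real coordinates of $x$ whose coefficients are precisely the $a_{\alpha\beta}$; the $\mathbb{R}$-linear independence of the monomials $x^\alpha\bar x^\beta$ then forces $a_{\alpha\beta}=0$ for every $(\alpha,\beta)$, so $F\equiv 0$. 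The only genuinely delicate step is the bookkeeping of the first paragraph, where the multiplicities $1$, $a_{\mathbb{C}}$, $b_{\mathbb{C}}$ of the Peirce pieces must be correctly paired against the quadratic eigenvalues to recover exactly $g=(r-1)a_{\mathbb{C}}+b_{\mathbb{C}}+2$; the remaining steps are standard $\Aut(\mathbb{V})$-equivariance and polarization.
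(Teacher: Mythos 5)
Your proof is correct. Note that the paper itself gives no argument for Proposition \ref{powerg} and simply defers to \cite{fkklr} (Part V, Proposition VI.3.6), so what you have produced is a self-contained proof assembled entirely from tools already present in the paper: the Peirce-eigenvalue computation of the dual Bergman operator (which the paper carries out in the real case and which, as you observe, transfers verbatim to a PHJTS when the spectral coefficients $t_j$ are real), the normalization $h(x,x)=\prod_j(1+t_j^2)$ from the characterization of $h$, the fact that every $\Aut(\mathbb V)$-orbit meets the flat $A=\bigoplus_j\mathbb R c_j$, and the sesquiholomorphic polarization principle (two polynomials, holomorphic in $x$ and antiholomorphic in $y$, agreeing on the diagonal agree everywhere) that the author himself invokes in the uniqueness argument for $h$. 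Your exponent bookkeeping $2+(j-1)a_{\mathbb C}+(r-j)a_{\mathbb C}+b_{\mathbb C}=g$ is right, as is the conjugation identity $C(kx,ky)=k\,C(x,y)\,k^{-1}$ underlying the $\Aut(\mathbb V)$-invariance of $\det_{\mathbb C}C$. The cited proof in \cite{fkklr} proceeds differently (via the quasi-inverse and transformation properties of the generic norm); your diagonal-plus-polarization route has the advantage of staying entirely inside the circle of ideas of Sections 4, 5 and 8 of this paper.
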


For a proof, see \cite{fkklr} Part V, Proposition VI.3.6.

For a special class of PHJTS, we will need another interpretation of the generic minimal polynomial (equivalently of the complex fundamental kernel). A PHJTS $V$ is said to be of \emph{ tube type} or  of \emph{Jordan algebra type} is there exists a  tripotent $c$ such that $V=V_2(c)$. Observe that such a tripotent is necessary maximal and then the same property will then be valid for \emph{any} maximal  tripotent. If this is the case, then the product $x._c y = \{x,c,y\}$ endows $V$ with a structure of complex Jordan algebra, with unit element $c$. The map $Q(c)$ is a antiholomorphic involution of $V$ and its fixed points set is a Euclidean Jordan algebra\footnote{For the theory of Euclidean Jordan algebra, see \cite{fk}}. The converse is true. Let $J$ be a Euclidean Jordan algebra. Let $\mathbb J$ be its complexification, and define on $\mathbb J$ the trilinear product
\[ \{x,y,z\} = (x\overline y)z+(z\overline y)x -(xz)\overline y\ .\]
Then $\mathbb J$ is a PHJTS of tube type (called the \emph{Hermitification} of $J$).

Let $L(x)$ and $P(x)$ be the standard operators on $J$, and extend them to $\mathbb J$ as complex linear operators (see \cite{fk}). The operators $L(x,y)$ and $Q(x)$  of $\mathbb J$ are given by 
\[ L(x,y)= L(x\overline y) +L(x)L(\overline y) -L(\overline y)L(x), \quad Q(x) =2 L(x)^2-L(x^2) = P(x)\ .\]

The rank of the PHJTS $\mathbb J$ coincides with the rank of the Jordan algebra $J$, any Jordan frame of $J$ is a Jordan frame of $\mathbb J$. The structure group $Str(\mathbb J)$ is the complexification of $Str(J)$.
 
 Let $J$ be a simple Euclidean Jordan algebra. Then $\mathbb J$ is a simple PHJTS. Let $(c_1,c_2,\dots,c_r)$ be a Jordan frame of $J$. There is a corresponding Peirce decomposition $J=\sum_{j=1}^r \mathbb R c_j\bigoplus_{1\leq i<j\leq r} J_{ij}$.  The spaces $J_{ij}, i<j$ have all the same dimension, usually denoted by $d$. Then $\mathbb J = \sum_{j=1}^r  \mathbb C c_j\bigoplus_{1\leq i<j\leq r} \mathbb J_{ij}$ is the Peirce decomposition of $\mathbb J$. Hence the characteristic numbers of $\mathbb J$ as a simple PHJTS are $a_\mathbb C=d$ and $b_\mathbb C=0$.
 
 Let $\Delta$ be the determinant function of $J$. Also let $a_k$ be the polynomials on $J$ defined by
\[\Delta(Te-x) = T^r + \sum_{k=1}^r (-1)^k T^{\,r-k}\, a_k(x)\ .
\]
Let $(c_1,c_2,\dots, c_r)$ be a Jordan frame of $J$, and let $x=\sum_{j=1}^r t_jc_j$. Then
\[ \Delta(x) = \prod_{j=1}^r t_j\ , a_k(x) = \sum_{i_1<i_2<\dots<i_k} t_{i_1} t_{i_2} \dots t_{i_k}\ .\]
Extend $\Delta$ and the $a_k$'s as holomorphic polynomials on $\mathbb J$.

\begin{proposition}\label{PHJTSJA}
 Let $q(z,w)$ a polynomial on $\mathbb J\times \mathbb J$ which is holomorphic in $z$, antiholomorphic in $w$ and satisfies
\[ q(gz, \sigma(g) w) = q(z,w)\ ,\]
for any $g\in Str(\mathbb J)_0$. Then there exists a unique polynomial $p$
 on $J$, which is invariant by $Aut(J)_0$ such that, for any $x\in J$
 \[ q(x,x) =p(x^2)\ .\]
 For any $x\in J$,
 \[ q(x,e) = p(x)\ .\]
 Moreover, the correspondance $q\longmapsto p$ is $1$ to $1$.
 
 \end{proposition}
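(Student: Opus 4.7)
The plan is to set $p(x):=q(x,e)$ for $x\in J$, verify its $Aut(J)_0$-invariance, and then derive the key identity $q(x,x)=p(x^2)$ by applying the covariance to the element $g=P(x^{1/2})$; uniqueness and existence of the lift $q$ from $p$ then follow respectively from analytic continuation and from the classical restriction theorem combined with the fundamental invariants $m_k$ of $\mathbb J$.

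The function $p(x):=q(x,e)$ is manifestly polynomial on $J$. For any $k\in Aut(J)_0$, its $\mathbb C$-linear extension to $\mathbb J$ lies in $Aut(\mathbb J)=Str(\mathbb J)\cap U(\mathbb J)$, so $\sigma(k)=(k^*)^{-1}=k$; moreover $ke=e$ since Jordan-algebra automorphisms fix the unit. The covariance then gives
\[
p(kx)=q(kx,e)=q(kx,\sigma(k)e)=q(x,e)=p(x),
\]
proving invariance.

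To establish $q(x,x)=p(x^2)$, I work first for $x$ in the symmetric cone $\Omega\subset J$. Set $g=P(x^{1/2})$; by continuous deformation from $P(e)=\id$, $g\in Str(J)_0\subset Str(\mathbb J)_0$, and since $P(x^{1/2})$ is symmetric for the trace form, $\sigma(g)=g^{-1}=P(x^{-1/2})$. A direct spectral computation on the Peirce decomposition of a Jordan frame diagonalizing $x$ yields the two identities
\[
P(x^{1/2})\,x=x^2,\qquad P(x^{-1/2})\,x=e.
\]
Applying the covariance $q(gz,\sigma(g)w)=q(z,w)$ with $z=w=x$ then reads $q(x,x)=q(x^2,e)=p(x^2)$. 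Both sides being polynomial in $x\in J$, the identity extends from the open set $\Omega$ to all of $J$.

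For the bijective correspondence: if $q_1,q_2$ both satisfy the hypotheses and produce the same $p$, then $\tilde q:=q_1-q_2$ is holomorphic in $z$ and vanishes on the totally real slice $J\times\{e\}$, hence on $\mathbb J\times\{e\}$; covariance then propagates the vanishing to $\mathbb J\times\bigl(Str(\mathbb J)_0\cdot e\bigr)$, and since this orbit of $e$ is Zariski dense in $\mathbb J$ and $\tilde q$ is polynomial (hence antiholomorphic) in $w$, $\tilde q\equiv 0$. Conversely, given a $p$ invariant under $Aut(J)_0$, the classical restriction theorem for the compact symmetric pair $(Str(J)_0,Aut(J)_0)$ (of type $A_{r-1}$ with Weyl group $S_r$) provides a polynomial $F$ with $p=F(a_1,\dots,a_r)$, where the $a_k$ are the coefficients of the generic minimal polynomial of $J$. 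The fundamental invariants $m_k(z,w)$ of the PHJTS $\mathbb J$ are holomorphic in $z$, antiholomorphic in $w$, $Str(\mathbb J)_0$-covariant, and satisfy $m_k(x,e)=a_k(x)$ because $V^{(e)}=\mathbb J$ as Jordan algebra; hence $q:=F(m_1,\dots,m_r)$ is a lift of $p$. The conceptually delicate step is the choice of $g=P(x^{1/2})$ in the second paragraph; everything else is propagated by analytic continuation and the restriction theorem.
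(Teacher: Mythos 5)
Your argument is correct, but note that the paper does not actually prove this proposition: it simply cites Faraut--Kor\'anyi (Corollary XI.3.4 and Proposition XIV.1.1), so what you have written is a self-contained substitute for that reference rather than a variant of an argument in the text. Your route is the standard one and all the key steps check out: defining $p(x)=q(x,e)$ and using that $\mathbb C$-linear extensions of elements of $Aut(J)_0$ are unitary (so $\sigma(k)=k$) gives the invariance; the quadratic representation trick $g=P(x^{1/2})$, $\sigma(g)=P(x^{-1/2})$, together with $P(x^{1/2})x=x^2$ and $P(x^{-1/2})x=e$, gives $q(x,x)=q(x^2,e)=p(x^2)$ on $\Omega$ and hence everywhere by polynomiality; injectivity follows from vanishing on the totally real slice $J\times\{e\}$ plus the openness of the $Str(\mathbb J)_0$-orbit of $e$; and surjectivity follows from Chevalley restriction plus the covariance of the fundamental invariants $m_k$. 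Three small points are worth tightening. First, $(Str(J)_0,Aut(J)_0)$ is a \emph{noncompact} Riemannian symmetric pair, not a compact one; what you need is the spectral theorem in $J$ together with the restriction isomorphism onto $S_r$-invariant polynomials on the flat $\bigoplus\mathbb R c_j$. Second, the identity $m_k(x,e)=a_k(x)$ is derived in the paper as a \emph{consequence} of this proposition, so to avoid any appearance of circularity you should spell out the direct verification: for $x=\sum t_jc_j$ the powers $x^{(k,e)}$ are the ordinary Jordan powers, so $m(T,x,e)=\prod_j(T-t_j)$ on the flat, and both sides are $Aut(J)_0$-invariant. Third, the uniqueness of $p$ as characterized by $q(x,x)=p(x^2)$ deserves one line: two polynomials agreeing on the set of squares $\overline\Omega$, which has nonempty interior, coincide.
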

See \cite{fk}, Corollary XI.3.4 and Proposition XIV.1.1.

\begin{proposition} Let $h$ be the complex fundamental kernel of $\mathbb J$.
 For any $x\in \mathbb J$ and $1\leq j\leq r$,
\[ h(x,e) = \Delta(e+x), \quad m_k(x,e) = a_k(x)\ .\]
\end{proposition}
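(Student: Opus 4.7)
The plan is in two steps: reduce $h(x,e)=\Delta(e+x)$ to the family of identities $m_k(x,e)=a_k(x)$ for $1\leq k\leq r$, then prove these by applying Proposition \ref{PHJTSJA} to $m_k$.

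\emph{Reduction.} From \eqref{mh}, $h(x,e)=1+\sum_{k=1}^{r} m_k(x,e)$. Substituting $T=-1$ in $\Delta(Te-x)=T^r+\sum_{k=1}^{r}(-1)^k T^{\,r-k}a_k(x)$ and using the homogeneity $\Delta(-(e+x))=(-1)^r\Delta(e+x)$ yields $\Delta(e+x)=1+\sum_{k=1}^{r} a_k(x)$. So the first assertion follows from the second.

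\emph{Main step.} I would apply Proposition \ref{PHJTSJA} to $q:=m_k$. The hypotheses transfer coefficientwise from $m(T,x,y)$: holomorphicity in $x$ and antiholomorphicity in $y$ come from Proposition \ref{minpol}(ii), while the invariance $m_k(gz,\sigma(g)w)=m_k(z,w)$ for $g\in Str(\mathbb J)_0$ follows from the $Str(\mathbb V)$-invariance of the generic minimal polynomial recalled just before Proposition \ref{minpol}. The proposition then yields a unique $Aut(J)_0$-invariant polynomial $p_k$ on $J$ with
\[
m_k(x,x)=p_k(x^2) \quad \text{and} \quad m_k(x,e)=p_k(x), \qquad x\in J.
\]

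\emph{Identification of $p_k$ with $a_k$.} Fix a Jordan frame $(c_1,\ldots,c_r)$ of $J$ with real span $A=\bigoplus_{j=1}^r\mathbb R c_j$. For $x=\sum t_j c_j\in A$ one has $x^2=\sum t_j^2 c_j$ in $J$, and Proposition \ref{minpol}(iii) gives
\[
p_k(x^2)=m_k(x,x)=\sum_{i_1<\cdots<i_k} t_{i_1}^2\cdots t_{i_k}^2=a_k(x^2).
\]
Hence $p_k$ and $a_k$ agree on the cone of squares in $A$, which has nonempty interior, so by polynomiality they agree on all of $A$. Both are $Aut(J)_0$-invariant (for $a_k$, because it is a coefficient of the generic minimal polynomial of the Jordan algebra $J$) and every $Aut(J)_0$-orbit in $J$ meets $A$ by the spectral theorem for Euclidean Jordan algebras, so $p_k=a_k$ on $J$. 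Since $x\mapsto m_k(x,e)$ and $x\mapsto a_k(x)$ are both holomorphic polynomials on $\mathbb J$ agreeing on the real form $J$, they coincide on $\mathbb J$.

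The only mild obstacle is clerical: verifying the hypotheses of Proposition \ref{PHJTSJA} for $m_k$ and carrying the frame comparison from the cone of squares out to $J$ and then to $\mathbb J$. All the substantive content has already been packaged into Propositions \ref{minpol} and \ref{PHJTSJA}.
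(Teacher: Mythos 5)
Your proof is correct and follows essentially the same route as the paper: both arguments rest on Proposition \ref{PHJTSJA} combined with the Jordan-frame computation of Proposition \ref{minpol}(iii); the paper applies the correspondence $q\mapsto p$ directly to $h$ (getting $p=\Delta(e+\cdot)$ from $h(x,x)=\Delta(e+x^2)$) and remarks that the $m_k$ case is similar, whereas you treat the $m_k$ first and recover the statement for $h$ by summing. This reversal of order is a cosmetic difference, not a different method.
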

\begin{proof} The polynomial $h$ satisfies all assumptions of the previous proposition. Let $(c_1,c_2,\dots, c_r)$ be a Jordan frame of $J$, and let $x=\sum_{j=1}^r x_jc_j$. Then \[h(x,x) = \prod_{j=1}^r (1+x_j^2)= \Delta(e+x^2),\]
so that the associated polynomial $p$ on $J$ is equal to $\Delta(e+x)$. A similar argument holds for the relation between the polynomials $a_k$ and the $m_k$.
\end{proof}

\section{The fundamental kernel for a PJTS}

\subsection{ Hermitification of a PJTS }
Let us now examine the real theory, i.e. the case of a PJTS. The main idea is to use the fact that a PJTS is a real form of a PHJTS.

\begin{proposition}\label{minC}
 Let $V$ be a PJTS, and let $\mathbb V$ be its Hermitification. Then the restriction to $V\times V$ of the minimal polynomial (over $\mathbb C$) of $\mathbb V$ coincides with the minimal polynomial (over $\mathbb R$) of $V$.
\end{proposition}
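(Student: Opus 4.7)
The plan is to reduce the equality of the two generic minimal polynomials to equality of their coefficient polynomials $m_j$, which in both the real and complex settings are characterised by the same wedge-product relation. The first step is a direct computation. For $y \in V$, the extended triple product $\{\,\cdot\,,y,\,\cdot\,\}_{\mathbb V}$ is $\mathbb C$-conjugate-linear in the middle slot, but conjugation is trivial on real elements, so $\{x,y,z\}_{\mathbb V} = \{x,y,z\}_V$ for all $x,y,z \in V$. It follows that the iterated $y$-powers $x^{(k,y)}$ computed in $V^{(y)}$ and in $\mathbb V^{(y)}$ agree, and consequently $\beta_k^{\mathbb V}(x,y) = \beta_k^V(x,y)$ under the natural inclusion $\Lambda^k V \hookrightarrow \Lambda^k \mathbb V$ for every $(x,y)\in V\times V$.

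Next I would verify that the two absolute ranks coincide, $\rho_V = \rho_{\mathbb V}$; by Proposition \ref{minpol} this common value is then the rank $r$. The inequality $\rho_V \le \rho_{\mathbb V}$ is immediate from Step 1. For the converse, the key observation is that $\beta_k^{\mathbb V}$ is a polynomial on $\mathbb V\times\mathbb V$ that is holomorphic in $x$ and antiholomorphic in $y$. Suppose it vanished on $V\times V$. Fixing $y\in V$, its restriction in $x$ would be a holomorphic polynomial on $\mathbb V\cong V\otimes_{\mathbb R}\mathbb C$ vanishing on the real form $V$, hence identically zero on $\mathbb V$; fixing then an arbitrary $x\in\mathbb V$ and applying the same identity principle in the antiholomorphic variable $y$ shows $\beta_k^{\mathbb V} \equiv 0$ on $\mathbb V\times\mathbb V$. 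So nonvanishing of $\beta_k^V$ on $V\times V$ is equivalent to nonvanishing of $\beta_k^{\mathbb V}$ on $\mathbb V\times\mathbb V$, giving $\rho_V=\rho_{\mathbb V}$. This identity-principle argument is the only delicate point of the proof: a priori one might fear that the complex theory detects a larger rank than the real one, and the holomorphy/antiholomorphy of $\beta_k^{\mathbb V}$ is exactly what rules this out.

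Once equal ranks are established, both $m_j^V$ and $m_j^{\mathbb V}$ are characterised by the same relation \eqref{mj}. For any regular $(x,y)\in V\times V$ (i.e.\ with $\beta_r^V(x,y)\neq 0$) both sides of that identity lie in $\Lambda^r V\subset \Lambda^r \mathbb V$ and, by Step 1, give the same vector whether computed in $V$ or in $\mathbb V$; hence $m_j^V(x,y) = m_j^{\mathbb V}(x,y)$. Regular pairs form a Zariski dense subset of $V\times V$ (the nonvanishing locus of $\beta_r^V$, which is not identically zero by Step 2), so the polynomial functions $m_j^V$ and $m_j^{\mathbb V}|_{V\times V}$ agree on a Zariski dense set and therefore everywhere on $V\times V$. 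Assembling the coefficients via \eqref{mmj} yields the claimed equality of the two generic minimal polynomials on $V\times V$.
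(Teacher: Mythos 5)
Your proposal is correct and follows essentially the same route as the paper: the identity principle for the polynomial $\beta_\rho$, holomorphic in $x$ and antiholomorphic in $y$, applied to the real form $V\times V$ of $\mathbb V\times \mathbb V_{op}$ to equate the absolute ranks, followed by the characterisation \eqref{mj} of the coefficients $m_j$ on the (Zariski dense) set of regular pairs. Your write-up merely spells out in more detail the steps the paper leaves implicit.
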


\begin{proof} Let $\rho$ be the rank of $\mathbb V$. For $k\geq \rho+1$, $\beta_k$ (defined by \eqref{betak}) vanishes identically. The polynomial $\beta_\rho $  is  holomorphic in $x$ and antiholomorphic in $y$. Now $V\times V$ is a real form of the space $\mathbb V\times \mathbb V_{op}$. Hence $\beta_\rho(x,y)$ can not vanish identically on $V\times V$. Hence $\rho$ is the degree of the generic minimal polynomial of $V$. Now $m_j$ is characterized by the relation \eqref{mj}. When $(x,y)$ is a regular pair in $V\times V$,  $m_j(x,y)$ is real-valued and hence the restriction of $m_j$ to $V\times V$ is a real-valued polynomial, and the statement follows.
\end{proof}

As a corollary, the absolute rank of $V$ is equal to the rank of its Hermitification $\mathbb V$.

\begin{proposition} Let $V$ be a simple PJTS. Then its Hermitification $\mathbb V$ is simple (as a PHJTS), unless $V$ admits a complex structure $J$ for which $V$ is a PHJTS. If this happens, then $\mathbb V \simeq V\oplus V_{op}$, where $V_{op}$ is the PHJTS $V$ with the opposite complex structure given by $-J$. \end{proposition}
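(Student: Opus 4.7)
The plan is to deduce both directions from the behaviour of the conjugation $\alpha$ of $\mathbb V$ with respect to $V$ on the decomposition of $\mathbb V$ into simple PHJTS ideals. First I would treat the direction ``$V$ admits a compatible complex structure $J$ $\Rightarrow$ $\mathbb V \simeq V\oplus V_{op}$''. Extend $J$ to a $\mathbb C$-linear endomorphism $\tilde J$ of $\mathbb V = V \otimes_{\mathbb R} \mathbb C$; since $\tilde J^{\,2} = -\id$ this produces the spectral decomposition $\mathbb V = \mathbb V_+ \oplus \mathbb V_-$ into the $\pm i$-eigenspaces of $\tilde J$. The PHJTS identities $J\{x,y,z\} = \{Jx,y,z\} = -\{x,Jy,z\}$ on $V$ extend, by $\mathbb C$-linearity in $x$ and $z$ and $\mathbb C$-antilinearity in $y$, to the analogous identities for $\tilde J$ on $\mathbb V$; a short computation then shows that both $\mathbb V_\pm$ are PHJTS ideals. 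The $\mathbb R$-linear map $\pi_\pm : V \to \mathbb V_\pm$ defined by $v\mapsto \tfrac{1}{2}(v \mp i\tilde J v)$ is bijective, intertwines $J$ (resp.\ $-J$) with multiplication by $i$, and, using the vanishing of triple products that mix $\mathbb V_+$ and $\mathbb V_-$, transports the triple product of $V$ (resp.\ $V_{op}$) to that of $\mathbb V_\pm$. This gives $V\simeq \mathbb V_+$ and $V_{op}\simeq \mathbb V_-$, whence $\mathbb V\simeq V\oplus V_{op}$.

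For the converse, suppose $\mathbb V$ is not simple and write $\mathbb V = \bigoplus_{j=1}^k \mathbb V_j$ for its decomposition into simple PHJTS ideals, with $k\geq 2$. The conjugation $\alpha$ is conjugate linear, involutive and preserves triple products, so it sends each simple ideal to another simple ideal; by uniqueness of the decomposition it induces an involutive permutation of the $\mathbb V_j$. If $\alpha$ fixed some $\mathbb V_j$, then $V\cap \mathbb V_j$ would be a non-zero real form of $\mathbb V_j$ and a real ideal of $V$; simplicity of $V$ would force $V\cap \mathbb V_j = V$, hence $\mathbb V_j = \mathbb V$, contradicting $k\geq 2$. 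If two disjoint orbits of length two existed, the $\alpha$-fixed points in one such orbit-sum $\mathbb V_1 \oplus \alpha(\mathbb V_1)$ would again give a proper non-zero ideal of $V$, contradicting simplicity. Hence $k=2$ and $\mathbb V = W \oplus \alpha(W)$ with $W$ simple and $\alpha$ swapping the two summands.

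The last step is to transfer the PHJTS structure of $W$ back to $V$. The real-linear map $\varphi : W \to V$, $w\mapsto w + \alpha(w)$, is a bijection onto $V = \mathbb V^\alpha$ by the direct-sum decomposition. The key observation is that all mixed triple products between $W$ and $\alpha(W)$ vanish (each such product lies simultaneously in both ideals and $W\cap \alpha(W)=0$), so
\[\{w_1+\alpha w_1, w_2+\alpha w_2, w_3+\alpha w_3\} = \{w_1,w_2,w_3\}_W + \alpha\{w_1,w_2,w_3\}_W\ ,\]
which, combined with $\alpha$-equivariance of the triple product, shows that $\varphi$ is an isomorphism of PJTS. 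Pulling back the complex structure of $W$ via $\varphi$ endows $V$ with a complex structure $J$ making it a PHJTS isomorphic to $W$, and the analogous map from $\alpha(W)$ yields $\alpha(W)\simeq V_{op}$, so $\mathbb V\simeq V\oplus V_{op}$. The most delicate step, to my mind, is the ideal-theoretic dichotomy in the converse: both ``reducing'' arguments (excluding $\alpha$-fixed simple ideals and excluding multiple $\alpha$-orbits) rely essentially on the fact that $V$ is simple \emph{as a real PJTS}, translated through $\alpha$-fixed real subspaces of complex ideals.
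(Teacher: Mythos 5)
Your proposal is correct, but it takes a genuinely different route from the paper's. The paper argues in one direction only: starting from an arbitrary non-trivial complex ideal $W\subset\mathbb V$, it shows $W\cap V=\{0\}$ and that the ``real part'' projection of $W$ fills all of $V$, so that $W$ is the graph $\{u+iJu,\ u\in V\}$ of a map $iJ$; the relations $J^2=-\id_V$ and the Hermitian compatibility of $J$ with the triple product are then read off directly from $W$ being an ideal, and $W\simeq V_{op}$, $W'=\{v-iJv\}\simeq V$ give the splitting. You instead invoke the decomposition of $\mathbb V$ into simple ideals, let the conjugation $\alpha$ act on them as an involutive permutation, and use simplicity of $V$ twice (to exclude $\alpha$-fixed simple ideals and to exclude a second orbit) to force $\mathbb V=W\oplus\alpha(W)$; the complex structure on $V$ is then pulled back through the averaging bijection $w\mapsto w+\alpha(w)$. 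Your route needs as input the existence and uniqueness of the decomposition of a PHJTS into simple ideals (standard, and parallel to what the paper records for PJTS), which the paper's graph construction avoids; in exchange, your version makes the role of the conjugation completely transparent and the transfer of structure cleaner. You also prove explicitly the converse implication --- a compatible complex structure $J$ on $V$ yields $\mathbb V\simeq V\oplus V_{op}$ via the $\pm i$-eigenspaces of the $\mathbb C$-linear extension of $J$ --- which the paper leaves implicit even though it is logically required for the clause ``if this happens, then $\mathbb V\simeq V\oplus V_{op}$''. All the individual steps you sketch (the eigenspaces being ideals, the vanishing of mixed products because they land in the intersection of two ideals, the bijectivity of $\pi_\pm$ and of $\varphi$) check out.
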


\begin{proof} Let $W$ be a non trivial (complex) ideal in $\mathbb V$, and consider $W\cap V$. It is clearly an ideal of $V$. Hence it is either $V$ or $\{0\}$. If $W\cap V = V$, then $V\subset W$, and hence $\mathbb V=V +iV=W$, a  contradiction. Hence $V\cap W =\{0\}$. 

Now let
\[ Y = \{ u\in V, \exists v\in V, u+iv\in W\}\ .\]
Then $Y$ is an ideal of $V$, and is not reduced to $\{ 0\}$, hence $Y=V$.
For any $v\in V$, there exists a unique element $v'$ in $V$ such that $v+iv'\in W$. This defines a  one-to-one map $J$ of $V$ into itself, such that $W = \{ u+iJu, u\in V\}$. The map $J$ is clearly $\mathbb R$-linear. 
For $u\in V$, $i(u+iJu)=-Ju+iu$ also belongs to $W$, so that $J^2u=-u$. As $u$ was arbitrary, $J^2=-\Id_V$. Now, for $u,v,w$ in $V$, 
\[\{ u+iJu,v,w\} = \{u,v,w\} +i\{Ju,v,w\}\]
belongs to $W$, and hence $J\{u,v,w\} = \{Ju,v,w\}$, showing that the triple product on $W$ is complex linear in the first variable (w.r.t. $J$), and, by symmetry, also in the third variable. By a similar argument,  $\{u,Jv,w\} = -J\{u,v,w\}$ for any triple $u,v,w\in V$. Hence $V$ admits a complex structure. 

The map $\iota : V\longmapsto W$ defined by $\iota (v) = v+iJv$ satisfies $\iota(Jv) = -i Jv$, so that $(W,i)$ is isomorphic to $(V,-J) = (V,J)_{op}$. Let  $W' = \{ v-iJv, v\in V\}$. Then $W'$ is an ideal in $\mathbb V$, $(W',i)$ is isomorphic to $(V,J)$ and $\mathbb V = W\oplus W'$.
\end{proof}

\subsection{ The fundamental kernel of a PHJTS}

\begin{proposition}
Let $\mathbb V$ be a PHJTS of rank $r$, and let $m(T,x,y)$ be the complex generic minimal polynomial of $\mathbb V$. When regarded as a real PJTS,  its absolute rank $\rho$ is equal to $2r$, and the (real) generic minimal polynomial  of $\mathbb V$ is given by

\[ m_\mathbb R(T,x,y) = m(T,x,y) \overline{m(T,x,y)} \ .\]
\end{proposition}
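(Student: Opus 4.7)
\emph{Plan.} The plan is to identify $m_{\mathbb R}(T,x,y) = m(T,x,y)\,\overline{m(T,x,y)}$ by comparing annihilators of $x$ in the Jordan algebra $V^{(y)}$ over $\mathbb C$ and over $\mathbb R$. By power-associativity of Jordan algebras, the unital subalgebra of $V^{(y)}$ generated by $x$ is commutative and associative, so evaluation at $x$ is a ring homomorphism $\mathbb C[T]\to\langle x\rangle$. Since $T\,m(T,x,y)$ annihilates $x$ by construction, so does $T\,m(T,x,y)\,\overline{m(T,x,y)}$. The product $M(T,x,y):=m\,\overline{m}$ is monic of degree $2r$ in $T$, self-conjugate (hence its coefficients are $\mathbb R$-polynomial functions of $(x,y)\in\mathbb V\times\mathbb V$ viewed as a real vector space), and provides a degree-$2r$ annihilating polynomial, establishing the bound $\rho_{\mathbb R}\leq 2r$.

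For the reverse inequality, let $m_{\mathbb R}$ denote the monic real generic minimal polynomial. Since $T\,m_{\mathbb R}$ annihilates $x$ in $V^{(y)}$ over $\mathbb R$ and hence over $\mathbb C$, the $\mathbb C$-minimality of $m$ forces $m\mid m_{\mathbb R}$ in $\mathbb C[T]$, and, since $m_{\mathbb R}$ has real coefficients, taking complex conjugates gives $\overline{m}\mid m_{\mathbb R}$ as well. Provided $m$ and $\overline{m}$ are coprime in $\mathbb C[T]$ for generic $(x,y)$, this yields $m\,\overline{m}\mid m_{\mathbb R}$, whence $\deg m_{\mathbb R}\geq 2r$; combined with the upper bound, both sides are monic of the same degree, so $m_{\mathbb R}=M$ and $\rho_{\mathbb R}=2r$.

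It remains to verify coprimality of $m(T,x,y)$ and $\overline{m(T,x,y)}$ for generic $(x,y)$. This is a Zariski-open condition, given by non-vanishing of the resultant $\mathrm{Res}_T(m,\overline{m})$, a real-polynomial function of $(x,y)$, so it suffices to exhibit one pair at which it holds. Fix a Jordan frame $(c_1,\dots,c_r)$ of $\mathbb V$, pick real scalars $t_1,\dots,t_r$ nonzero with distinct squares, and set $x=\sum_j t_j c_j$ and $y=i\sum_j t_j c_j$. The Peirce identities on the frame reduce to $\{c_a,c_b,c_c\}=\delta_{ab}\delta_{bc}\,c_a$, and, using conjugate-linearity of the triple product in the middle argument, a straightforward induction yields $x^{(k,y)}=(-i)^{k-1}\sum_j t_j^{\,2k-1}c_j$; equivalently, left $._y$-multiplication by $x$ acts on the flat $\bigoplus_j\mathbb C c_j$ as the diagonal operator with eigenvalues $-it_j^2$. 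Hence $m(T,x,y)=\prod_j(T+it_j^2)$ has $r$ distinct purely imaginary roots, so $m$ and $\overline{m}=\prod_j(T-it_j^2)$ share none.

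The main technical hurdle is precisely this Jordan-frame computation: one must verify that the chosen pair $(x,y)$ is regular (a Vandermonde-type condition on the $t_j$) and correctly identify $m$ from the spectrum of the diagonal operator. Once this explicit model is secured, the rest of the argument is elementary divisibility in $\mathbb C[T]$.
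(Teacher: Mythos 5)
Your proof is correct, and its lower-bound argument takes a genuinely different route from the paper's. The paper shares your top-level structure for the upper bound and the identification: $T\,m(T,x,y)\,\overline{m(T,x,y)}$ annihilates $x$ in $V^{(y)}$ and has real coefficients, so once $\rho_{\mathbb R}=2r$ is known the monic degree-$2r$ polynomial $m\,\overline{m}$ must be $m_{\mathbb R}$. But for $\rho_{\mathbb R}\ge 2r$ the paper argues directly: it takes $y=c=c_1+\cdots+c_r$ and $x=\sum_j x_jc_j$ with the $2r$ numbers $x_1,\dots,x_r,\overline{x_1},\dots,\overline{x_r}$ pairwise distinct, computes $x^{(k,c)}=\sum_j x_j^k c_j$, and checks via a Vandermonde determinant in the real basis $\{c_j,\,ic_j\}$ that $x,x^{(2,c)},\dots,x^{(2r,c)}$ are already $\mathbb R$-linearly independent. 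You instead deduce the lower bound from divisibility in $\mathbb C[T]$ ($m\mid m_{\mathbb R}$, hence $\overline m\mid m_{\mathbb R}$ by conjugation) together with generic coprimality of $m$ and $\overline m$, certified by the resultant and your witness pair $(x,ix)$, whose spectrum $\{-it_j^2\}$ I agree is correct. Both proofs ultimately rest on an explicit frame computation of the same flavour; the paper's is more economical (one determinant, no appeal to $m$ being the generator of the complex annihilator ideal nor to the resultant), while yours isolates the structural reason the degree doubles — that $m$ and $\overline m$ are generically coprime — which is a reusable observation. Two routine points you should make explicit: the witness pair must also be chosen \emph{real}-regular so that $m_{\mathbb R}(T,x,y)$ is defined and has degree $\rho_{\mathbb R}$ there (all the relevant conditions are nonempty Zariski-open on the irreducible real variety $\mathbb V\times\mathbb V$, so a common witness exists), and the bound $\rho_y(x)\le 2r$ extends from complex-regular pairs to all pairs because linear dependence of $x,\dots,x^{(2r+1,y)}$ is a closed condition.
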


\begin{proof} Recall first that   be the complex generic minimal polynomial $m(T,x,y)$ is of degree $r$ in $T$ for any regular pair $(x,y)$. Let $(c_1,\dots,c_r)$ be a Jordan frame of $\mathbb V$, and let $c=c_1+\dots + c_r$ be the associated maximal tripotent. Let $x=\sum_{j=1}^r x_j c_j$, where the $x_j$ are complex numbers such that the $2r$ numbers $(x_j,\overline{x_k})1\leq j,k\leq r$ are all distinct. Consider, for $1\leq k\leq 2r$
$x^{(k,c)} = \sum_{j=1}^r x_j^k c_j$.  In the (real) basis $\{c_1,ic_1,c_2,\dots, c_r, ic_r,\dots, ic_r\}$ the coordinates of $x^{(k,c)}$ are 

\[\big(\Re (x_1^k), \Im (x_1^k),\dots, \Re (x_r^k), \Im (x_r^k)\big)\ .\]

By a Van der Monde type argument, the determinant of the $2r$ coordinates of this family of $2r$ vectors is easily seen to be different from $0$. So the absolute rank of $\mathbb V$ (over $\mathbb R$) is at least $2r$. Now for $(x,y)$ regular, $\mu(T,x,y) =m(T,x,y)\overline {m(T,x,y)}$ is a monic polynomial of degree $2r$ in $T$ with real coefficients, such $T\mu(T,x,y)$ takes the value $0$ when substituting $T=x$. This shows that the generic minimal polynomial of $\mathbb V$ (over $\mathbb R$) is of degree $2r$, and is equal to the product of the (complex) generic minimal polynomial with its conjugate.
\end{proof}

Define the (real) \emph{fundamental kernel} of the PHJTS $V$ to be 
\[ k(x,y) = h(x,y)\overline {h(x,y)}\ .\]

\begin{proposition} Let $V$ be a simple PHJTS, ansd let  $k(x,y)$be its fundamental kernel. Then
\[c(x,y) = k(x,y)^{\frac{p}{2}}\]
 where $p$ is the genus (over $\mathbb R$) of $V$.
\end{proposition}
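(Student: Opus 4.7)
The plan is to reduce the claim to the identity $c_\mathbb{C}(x,y) = h(x,y)^g$ of Proposition \ref{powerg}, exploiting the fact that the dual Bergman operator $C(x,y)$, defined by the same formula whether $V$ is viewed as a PHJTS or as a real PJTS, is in fact $\mathbb{C}$-linear. The argument then splits into a formal determinant step and a combinatorial step comparing the real and complex characteristic numbers.

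First, I would check that $C(x,y)=\id+2L(x,y)+Q(x)Q(y)$ is a $\mathbb{C}$-linear endomorphism of $V$: $L(x,y)$ is $\mathbb{C}$-linear since the PHJTS triple product is $\mathbb{C}$-linear in its last argument, while $Q(x)$ and $Q(y)$ are conjugate linear and their composition is therefore $\mathbb{C}$-linear. The elementary identity $\det_\mathbb{R}(A)=|\det_\mathbb{C}(A)|^2$, valid for any $\mathbb{C}$-linear endomorphism $A$ of a finite dimensional complex vector space, then yields
\[
c(x,y)=|\det_\mathbb{R} C(x,y)|=|\det_\mathbb{C} C(x,y)|^2=|c_\mathbb{C}(x,y)|^2=|h(x,y)|^{2g}=k(x,y)^g,
\]
where the last two equalities use Proposition \ref{powerg} and the definition $k=h\overline{h}$.

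It remains to show $g=p/2$, which is where the real work lies. I would carry out this comparison by analyzing the Peirce decomposition of $V$ with respect to a complex Jordan frame $(c_1,\dots,c_r)$. The key point is that on each $V(c_j,1)$ the conjugate linear involution $Q(c_j)$ has $V^+(c_j,1)$ as its $+1$-eigenspace and $iV^+(c_j,1)$ as its $-1$-eigenspace; multiplication by $i$ interchanges the $\pm 1$-eigenspaces, hence $\dim_\mathbb{C} V(c,1)=\dim_\mathbb{R} V^+(c,1)$ for every tripotent $c$. This implies that complex-primitive tripotents coincide with real-primitive ones and that the complex Jordan frame is also a real Jordan frame. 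Each real Peirce component is then the corresponding complex Peirce component viewed as a real vector space of twice the complex dimension, so $c_\mathbb{R}=2$, $a_\mathbb{R}=2a_\mathbb{C}$, $b_\mathbb{R}=2b_\mathbb{C}$, and therefore
\[
p=(r-1)a_\mathbb{R}+b_\mathbb{R}+2c_\mathbb{R}=2\bigl((r-1)a_\mathbb{C}+b_\mathbb{C}+2\bigr)=2g.
\]

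The main obstacle is this numerical step: the determinant identity is purely formal, whereas matching real and complex Peirce decompositions requires tracking the action of the antilinear $Q(c)$ against the two complex structures at play. A less direct alternative would be to evaluate both sides on a real Jordan frame using the explicit product formulas already established for $c(x,x)$ and $h(x,x)$, and then extend by $Aut(V)$-invariance and sesquianalyticity; this avoids the Peirce matching but requires a separate covariance analysis of $h$ under the adjoint action.
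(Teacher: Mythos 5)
Your proof is correct and follows essentially the same route as the paper: factor $\det_\mathbb{R} C(x,y)$ as $\det_\mathbb{C} C(x,y)\,\overline{\det_\mathbb{C} C(x,y)}$ to obtain $c(x,y)=k(x,y)^g$, then compare the real and complex Peirce decompositions to get $p=2g$. Your justification of the second step (showing via the conjugate-linear involution $Q(c_j)$ that a complex Jordan frame is also a real one, so that $c_{\mathbb R}=2$, $a_{\mathbb R}=2a_{\mathbb C}$, $b_{\mathbb R}=2b_{\mathbb C}$) is in fact more detailed than the paper's one-line remark that all Peirce subspaces are complex subspaces of twice the real dimension.
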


\begin{proof} Let $C(x,y)$ be the complex dual Bergman operator of $V$. Recall that the complex canonical kernel is $c_\mathbb C(x,y) ={ \det}_\mathbb C C(x,y)$. The canonical kernel (over $\mathbb R$) is $c(x,y) = {\det}_{\mathbb R} C(x,y)= \det _\mathbb C(x,y) \,\overline{\det _\mathbb C(x,y) }$. Hence
\[ c(x,y) = c_\mathbb C  (x,y) \overline{c_\mathbb C (x,y)}= h(x,y)^g \overline {h(x,y)}^g= k(x,y)^g
\]

Now, if $V=\bigoplus_{j=1}^r \mathbb C c_j \bigoplus_{1\leq i<j\leq r} V_{ij}\bigoplus_{1\leq j\leq r} 
V_{j0}$ is  a Peirce decomposition of $V$, all subspaces are complex subspaces, and as such, the various dimensions over $\mathbb R$ are twice their dimensions over $\mathbb C$. Hence $p= 2g$. The statement follows.
\end{proof}

\subsection{ The fundamental kernel for a reduced PJTS}

Consider a simple PJTS $V$ and let $\mathbb V$ be its Hermitification. The PJTS $V$ is said to be \emph{reduced} if a primitive tripotent of $V$ is primitive in $\mathbb V$. As $\mathbb V(c,1) = \mathbb C c$ for a primitive tripotent of $\mathbb V$, this is equivalent to the condition that $V(c,1) = \mathbb R c$ for a primitive tripotent of $V$. Notice that if true for one primitive tripotent, then it is true for any primitive tripotent. It also amounts to the condition that the characteristic number $c$  of $V$ is equal to $1$.

If $V$ admits a complex structure (that is if $V$ is a PHJTS), then $V$ is \emph{not} reduced. A PJTS $V$  will be said to be \emph{non-reduced} if it has no  complex structure and is not reduced. Simple PJTS fall into three families : those admitting a complex structure (=PHJTS), the reduced PJTS and the non-reduced PJTS.

\begin{proposition} Let $V$ be a  simple reduced PJTS of rank $r$.  Then its Hermitification $\mathbb V$ is a simple PHJTS of rank $r$ and the absolute rank of $V$ is equal to $r$.
\end{proposition}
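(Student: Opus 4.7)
The plan is to verify three things in sequence: simplicity of $\mathbb V$, that its rank is $r$, and then that the absolute rank of $V$ equals $r$. The last follows immediately from the first two by the corollary to Proposition \ref{minC} (which identifies the absolute rank of $V$ with the rank of $\mathbb V$), so the real work is in the first two points.

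For simplicity, I would invoke the proposition proved just above: if $\mathbb V$ is not simple as a PHJTS then $V$ carries a complex structure $J$ turning it into a PHJTS. But a PHJTS is never reduced, because for a primitive tripotent $c$ of a PHJTS one has $V(c,1) = \mathbb C\, c$, which is two-dimensional over $\mathbb R$, so the characteristic number $c$ is at least $2$. Since $V$ is reduced by hypothesis, this alternative is excluded and $\mathbb V$ is simple.

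For the rank, I would start from a Jordan frame $(c_1,\dots,c_r)$ of $V$ and show it is also a Jordan frame of $\mathbb V$. Each $c_j$ is a tripotent of $\mathbb V$ because the triple product extends $\mathbb C$-sesquilinearly and the identity $\{c_j,c_j,c_j\}=c_j$ is preserved. Since $V$ is reduced, each $c_j$ is primitive in $\mathbb V$ (this is precisely the definition of reduced). Mutual orthogonality $L(c_i,c_j)=0$ is the same condition in $V$ or $\mathbb V$, because the operator $L(c_i,c_j)$ on $\mathbb V$ is just the $\mathbb C$-linear extension of the corresponding operator on $V$. Finally, for $c=c_1+\dots+c_r$, the Peirce decomposition of $\mathbb V$ with respect to $c$ is the complexification of the Peirce decomposition of $V$ with respect to $c$ (since $L(c,c)$ on $\mathbb V$ extends the operator on $V$ and has the same spectrum). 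So $\mathbb V(c,0) = V(c,0)\otimes_{\mathbb R}\mathbb C = \{0\}$, which shows $c$ is maximal in $\mathbb V$. Hence $(c_1,\dots,c_r)$ is a Jordan frame of $\mathbb V$, and by the conjugacy statement for Jordan frames in $\mathbb V$, the rank of $\mathbb V$ equals $r$. Combining with Proposition \ref{minC}, the absolute rank of $V$ is $r$.

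The only subtle point is the behaviour of the Peirce decomposition under complexification, i.e.\ that the eigenspaces of $L(c,c)$ on $\mathbb V$ are the complexifications of those on $V$; this is immediate from the fact that $L(c,c)$ is a real symmetric operator on $V$ being extended $\mathbb C$-linearly. Everything else is a direct unpacking of the definition of \emph{reduced} and of the results already established.
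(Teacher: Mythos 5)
Your proposal is correct and follows essentially the same route as the paper: the paper likewise shows that a Jordan frame $(c_1,\dots,c_r)$ of $V$ remains a Jordan frame of $\mathbb V$ (maximality of $c=c_1+\cdots+c_r$ via the kernel of $L(c,c)$ being the complexification of the real kernel, primitivity by the definition of reduced), and then cites Proposition \ref{minC} for the absolute rank. Your explicit justification of simplicity (a PHJTS cannot be reduced since $V(c,1)=\mathbb C c$ forces $c\geq 2$) is a point the paper leaves to the remarks preceding the proposition, but it is the intended argument.
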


\begin{proof} Let $(c_1,c_2,\dots, c_r)$ be a Jordan frame of $V$. Let $c=c_1+\dots +c_r$. The kernel of $L(c,c)$ in $\mathbb V$ is the complexification of its kernel in $V$, hence is $\{0\}$ as $c$ is a maximal tripotent of $V$. Hence $c$ is a maximal tripotent of $\mathbb V$. As the $c_j$ are, by assumption primitive tripotents (and still orthogonal) of $\mathbb W$,  $(c_1,c_2,\dots, c_r)$ is a Jordan frame of $\mathbb V$, so the rank of $\mathbb V$ is $r$, and hence the absolute rank of $V$ is $r$ (cf  Proposition \ref{minC}).

\end{proof}

Let $h(x,y)$ be the complex fundamental kernel of $\mathbb V$. Then for $x,y\in V$, $h(x,y)$ is real (a consequence of Proposition \ref{minC}). Define the \emph{fundamental kernel} of $V$ by \[k(x,y) = h(x,y)^2\ .\]

\begin{proposition} Let $V$ be reduced simple PJTS. Then
\[ c(x,y) = k(x,y) ^{\frac{p}{2}}\ .\]
\end{proposition}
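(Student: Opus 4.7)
The plan is to relate the real-analytic objects for $V$ to the holomorphic objects for its Hermitification $\mathbb{V}$, and then appeal to Proposition~\ref{powerg}. For $x,y \in V$, I first observe that the dual Bergman operator $C(x,y)$ on $V$ and the complex dual Bergman operator $C_\mathbb{C}(x,y)$ on $\mathbb{V}$ are related by $C_\mathbb{C}(x,y) = C(x,y) \otimes_{\mathbb{R}} \mathrm{id}_{\mathbb{C}}$. Indeed, the triple product on $\mathbb{V}$ extends that on $V$ conjugate-linearly in the middle variable, so for $y \in V$ (where conjugation is trivial) the operators $L_{\mathbb{V}}(x,y)$ and $Q(x)Q(y)$ are the $\mathbb{C}$-linear extensions of the corresponding operators on $V$. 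By the standard identity $\det_{\mathbb{C}}(A\otimes_{\mathbb{R}} \mathbb{C}) = \det_{\mathbb{R}} A$ for the complexification of a real operator, this gives
\[
c_\mathbb{C}(x,y) = \det\nolimits_{\mathbb{C}} C_\mathbb{C}(x,y) = \det\nolimits_{\mathbb{R}} C(x,y).
\]

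Since $V$ is reduced, it admits no complex structure, so by the earlier propositions $\mathbb{V}$ is a simple PHJTS. Proposition~\ref{powerg} then yields $c_\mathbb{C}(x,y) = h(x,y)^{g}$, where $g$ is the complex genus of $\mathbb{V}$. Taking absolute values and using that $h(x,y)$ is real for $x,y\in V$ (Proposition~\ref{minC}), one obtains
\[
c(x,y) = |\det\nolimits_{\mathbb{R}} C(x,y)| = |h(x,y)|^{g} = (h(x,y)^2)^{g/2} = k(x,y)^{g/2}.
\]

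It remains to show that $g = p$. By the preceding proposition, a Jordan frame $(c_1,\dots,c_r)$ of $V$ remains a Jordan frame of $\mathbb{V}$. The operators $L(c_i,c_i)$ on $V$ extend $\mathbb{C}$-linearly to $\mathbb{V}$, and their simultaneous eigenspaces on $\mathbb{V}$ are the complexifications of their simultaneous eigenspaces on $V$. Hence each Peirce space of $\mathbb{V}$ is the complexification of the corresponding Peirce space of $V$, which gives $a_{\mathbb{C}} = a$ and $b_{\mathbb{C}} = b$. Combined with the assumption $c = 1$ (since $V$ is reduced), this yields
\[
g = a_{\mathbb{C}}(r-1) + b_{\mathbb{C}} + 2 = (r-1)a + b + 2c = p,
\]
and the claimed formula $c(x,y) = k(x,y)^{p/2}$ follows.

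The only substantive point is the matching of Peirce decompositions under complexification; this is where the reduced hypothesis is essential, since without it a primitive tripotent of $V$ need not remain primitive in $\mathbb{V}$, and the comparison of the characteristic numbers would no longer reduce to a direct complexification statement.
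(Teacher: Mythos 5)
Your proof is correct and follows essentially the same route as the paper: identify the complex dual Bergman operator on $\mathbb V$ restricted to $V\times V$ as the complexification of the real one, deduce $c(x,y)=c_{\mathbb V}(x,y)=h(x,y)^g=k(x,y)^{g/2}$, and then match the characteristic numbers via the Peirce decompositions to get $g=p$. Your treatment is in fact slightly more careful than the paper's on two points (the determinant identity for complexified operators and the absolute value in the definition of $c(x,y)$), but the argument is the same.
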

\begin{proof} Let $C_\mathbb V$ (resp. $C_V$) be the Bergman kernel of $\mathbb V$ (resp. $V$). For $x,y$ in $V\times V$, $C_\mathbb V(x,y)$ is  the complexification of $C_V(x,y)$ of $V$. Hence the restriction to $V\times V$ of the complex canonical kernel $c_\mathbb V$ of $\mathbb V$ coincides with the canonical kernel of $V$ (this argument is valid for any PJTS). Hence 
\[ c(x,y) = c_\mathbb V(x,y) = h(x,y)^g = k(x,y)^{\frac{g}{2}}\ .\] 
Now a Jordan frame of $V$ is a Jordan frame of $\mathbb V$, and the corresponding Peirce decomposition of $\mathbb V$ is just the complexification of the Peirce decomposition of $V$. So the characteristic numbers (over $\mathbb  R$) for $V$ are the same as characteristic numbers (over $\mathbb C)$ for $\mathbb V$. Hence $p=g$, and the statement follows.
\end{proof}

\subsection{The fundamental kernel for a non reduced PJTS}

For the non-reduced case, we first need a lemma (implicitly contained in \cite{l4} section 11, see also \cite{d}). Let $n\geq 2$ be an integer, and let $V^{(n)}= \mathbb   R^n$ with the triple product given by
\[\{x,y,z\} = (x,y)z+(z,y)x-(x,z)y\ ,
\]
where $(x,y)$ is the standard Euclidean product on $\mathbb R^n$. Then $V^{(n)}$ is a PJTS.
Let $c=e_1$ be the first vector of the standard basis of $\mathbb R^n$. Then
$L(c,c) = \Id$ and for  $x=(x_1,x_2,\dots, x_n)$, 
\[ Q(c)x=(x_1,-x_2,\dots,- x_n)\ .\]
Hence $V^{(n)}(c)^+ = \mathbb Rc$ and $V^{(n)}(c)^-=\{x_1=0\}$.
The Jordan algebra product is given by  
\[x._cz = (x_1z_1-x_2z_2-\dots -x_nz_n,\,x_1z_2+z_1x_2,\dots, x_1z_n+z_1x_n)\ ,
\]
for $x=(x_1,x_2,\dots,x_n)$ and $z=(z_1,z_2,\dots,z_n)$.

The Hermitification of $V^{(n)}$ is  $\mathbb V^{(n)}= \mathbb C^n$ with the triple product given by
\[\{x,y,z\} = (x, \overline y) z+ (z,\overline y) x -(x,z)\overline y\ ,
\]
where the the inner product on $\mathbb R^n$ is extended as a complex symmetric bilinear form.

The element $c=e_1$ is still a maximal tripotent, which is no longer primitive in $\mathbb V^{(n)}$. In fact, let 
\[d = \frac{1}{2}(e_1+ie_2),\quad \overline d = \frac{1}{2}(e_1-ie_2)\ .\]
Then $d$ and $\overline d$ are orthogonal primitive tripotents in $\mathbb V^{(n)}$, and $c=d+\overline d$, so that $(d,\overline d)$ is a Jordan frame for $\mathbb V^{(n)}$. 

Hence $V^{(n)}$ is a PJTS of rank $1$, of Jordan algebra type and which is not reduced.

\begin{lemma} Let $V$ be a PJTS of rank $1$, which is of Jordan algebra type and not reduced. Then $V$ is isomorphic to $V^{(n)}$ for $n=\dim V$.
\end{lemma}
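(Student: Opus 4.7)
My plan is to recover the spin-factor structure of $V^{(n)}$ by exploiting the Jordan algebra attached to a primitive tripotent. Let $c$ be a primitive tripotent of $V$; since $V$ has rank $1$, $c$ is also maximal, and since $V$ is of Jordan algebra type, $V=V(c,1)$. Non-reducedness gives $W:=V^-(c,1)\neq\{0\}$, so $V=\mathbb R c\oplus W$ with $Q(c)$ acting as $+\id$ on $\mathbb R c$ and $-\id$ on $W$. Equip $V$ with the Jordan product $x\cdot y:=\{x,c,y\}$, for which $c$ is the unit.

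The first key step is to show $w\cdot w\in\mathbb R c$ for every $w\in W$. Applying the fundamental formula $Q(Q(c)w)=Q(c)Q(w)Q(c)$ together with $Q(c)w=-w$ and $Q(-w)=Q(w)$ gives $Q(w)=Q(c)Q(w)Q(c)$, and since $Q(c)$ is an involution on $V$ this rearranges to $Q(c)Q(w)=Q(w)Q(c)$. Evaluating at $c$ and using $Q(c)c=c$ and $w\cdot w=Q(w)c$ yields $Q(c)(w\cdot w)=w\cdot w$, so $w\cdot w\in V^+(c,1)=\mathbb R c$. By polarization there is a symmetric bilinear form $b$ on $W$ with $w_1\cdot w_2=b(w_1,w_2)c$.

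Next I would show that $-b$ is positive definite. Polarizing $Q(x)=P_c(x)\circ Q(c)$ yields the identity $\{x,y,z\}=x\cdot(z\cdot y')+z\cdot(x\cdot y')-(x\cdot z)\cdot y'$ with $y'=Q(c)y$. Specializing $x=y=w\in W$ (so $y'=-w$), commutativity of the Jordan product makes the cross terms $-w\cdot(z\cdot w)$ and $(w\cdot z)\cdot w$ cancel, leaving $L(w,w)z=-(w\cdot w)\cdot z=-b(w,w)\,z$, hence $L(w,w)=-b(w,w)\,\id_V$. Taking trace gives $(w,w)_{\tr}=-n\,b(w,w)$ with $n=\dim V$, and positive-definiteness of the trace form forces $b(w,w)<0$ for $w\neq 0$. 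Define an inner product $(\cdot,\cdot)$ on $V$ by $(c,c)=1$, $(c,W)=0$, and $(w_1,w_2)=-b(w_1,w_2)$.

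Finally, expanding the same identity $\{x,y,z\}=x\cdot(z\cdot y')+z\cdot(x\cdot y')-(x\cdot z)\cdot y'$ for arbitrary $x=\alpha c+u$, $y=\beta c+v$, $z=\gamma c+t$, using the multiplication rules $c\cdot v=v$ and $u\cdot v=-(u,v)c$ to evaluate each summand, a short bookkeeping computation shows that the $\mathbb R c$ and $W$ components of the result coincide with those of $(x,y)z+(z,y)x-(x,z)y$, which is precisely the triple product of $V^{(n)}$. Any orthonormal basis of $V$ with $c$ as first vector then supplies an isomorphism $V\simeq V^{(n)}$ with $n=\dim V$. The main obstacle is extracting the algebraic fact $w\cdot w\in\mathbb R c$ from the Peirce data; once this is in hand, the trace argument and the final identification are essentially forced.
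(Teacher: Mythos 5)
Your proof is correct and follows the same route as the paper's: decompose $V=\mathbb R c\oplus V^-(c,1)$, show the Jordan product of two elements of $V^-(c,1)$ lands in $\mathbb R c$ (giving a bilinear form), prove that form is negative definite, and recover the triple product from the Jordan product via $Q(x)=P_c(x)\circ Q(c)$. The only differences are cosmetic — you justify $w\cdot w\in\mathbb R c$ by the fundamental formula rather than by $Q(c)$ being a Jordan algebra automorphism, and you get negative definiteness from $L(w,w)=-b(w,w)\,\id_V$ and the positivity of the trace form rather than from the Cartan-involution property — but the skeleton and conclusions are the same.
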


\begin{proof} By assumption, there exists a tripotent $c$, such that $L(c,c)=\Id$. As $c$ is primitive, $V_+ = \mathbb Rc \oplus V_-$, where $V_- = \{x,Q(c)x=-x\}$. Let $x\,._c\,y=\{x,c,y\}$ be the corresponding Jordan algebra structure on $V$.
Let $u,v$ in $V_-$. Then, as $Q(c)$ is an isomorphism of the Jordan algebra structure,
\[ Q(c)(u\,._c\,v) = Q(c)u\,._c\,Q(c)v=(-u)\,._c\,(-v)= u\,._c\,v\ ,\]
so that $u._cv$ belongs to $V_+$, hence  $u\,._c\, v = \alpha(u,v)c$, where $\alpha$ is a symmetric bilinear form on $V_-$. Moreover, $Q(c)$ is a Cartan involution of the Jordan algebra $(V, ._c)$, so that $\Tr (L_c(u\,._c\,v))$ is negative-definite on $V_-\times V_-$. As \[\Tr L_c(u\,._c\,v)= \Tr (\alpha(u,v) L_c(c)) =\alpha(u,v)\, n\ ,\]the form $\alpha$ is negative-definite. Choose an orthonormal basis $(e_2,\dots, e_n)$ of $V_-$ for the form $-\alpha$, and let $e_1=c$. Then the Jordan product on $V$ can be written as follows : for
$x=(x_1,x_2,\dots, x_n), y = (y_1,y_2,\dots, y_n)$,
\[x._c y = (x_1+y_1-x_2y_2-\dots -x_n y_n, x_1y_2+y_1x_2,\dots, x_1y_n+y_1x_n\ .\]
So the Jordan algebra $(V, ._c)$ is isomorphic to the Jordan algebra $V^{(n)}(c)$. Now, 
\[ Q(x) = P_c(x) \circ Q(c), \]
and $\{x,y,z\} = \big(\frac{1}{2}Q(x+z)-Q(x)-Q(z)\big) y$,
so that $V$ and $V^{(n)}$ are isomorphic as JTS.
\end{proof}

\begin{proposition}\label{NRJF}
 Let $V$ be a simple non-reduced PJTS of rank $r$. Then its Hermitification $\mathbb V$ is a simple PHJTS of rank $2r$. Moreover, given a Jordan frame $c_1,c_2,\dots, c_r$ of $V$, there exist primitive orthogonal tripotents $d_1,d_2,\dots, d_r$ in $\mathbb V$ such that $c_j=d_j+\overline d_j$ and $(d_1,\overline d_1,\dots, d_r,\overline d_r)$ is a Jordan frame of $\mathbb V$.
\end{proposition}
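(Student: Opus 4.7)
The strategy is to apply the preceding Lemma inside each Peirce 1-space $V(c_j,1)$ and then patch the resulting local decompositions into a Jordan frame of $\mathbb V$ using the Peirce calculus. For each $j$ the sub-PJTS $V_j := V(c_j,1)$ inherits a triple product in which $c_j$ is simultaneously primitive (because it is so in $V$) and maximal (since it acts as the unit of the Jordan algebra on $V_j$); hence $V_j$ has rank $1$ and is of Jordan algebra type. The hypothesis that $V$ is non-reduced forces $\dim V_j = c > 1$, so $V_j$ is itself non-reduced, and the Lemma identifies $V_j \cong V^{(c)}$. Since Hermitification commutes with taking Peirce 1-spaces, $\mathbb V_j := \mathbb V(c_j,1) \cong \mathbb V^{(c)}$, and the explicit calculation performed before the Lemma produces, for any unit vector $u_j \in V^-(c_j,1)$, mutually orthogonal primitive tripotents $d_j = \tfrac{1}{2}(c_j+iu_j)$ and $\bar d_j = \tfrac{1}{2}(c_j - iu_j)$ of $\mathbb V_j$ with $c_j = d_j + \bar d_j$.

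The central point to verify is mutual orthogonality of the pairs across different indices $j \neq k$. For orthogonal tripotents $c_j, c_k$ of $V$ one has the Peirce inclusion $\mathbb V(c_j,1) \subset \mathbb V(c_k,0)$, and in particular $d_j, \bar d_j \in \mathbb V(c_k,0)$ while $d_k, \bar d_k \in \mathbb V(c_k,1)$. The strong Peirce relation $\{\mathbb V(c_k,1),\mathbb V(c_k,0),\mathbb V\} = 0$---this is the ``all other brackets vanish'' clause of the Peirce rules recalled in Section~2, applied in $\mathbb V$ with the orthogonal tripotent system $(c_1,\dots,c_r)$---then kills each of the operators $L(d_j,d_k), L(d_j,\bar d_k), L(\bar d_j,d_k), L(\bar d_j,\bar d_k)$, yielding orthogonality.

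Each $d_j$ is primitive in the full $\mathbb V$: on $\mathbb V(c_j,0)$ the same strong Peirce relation gives $L(d_j,d_j) = 0$; on $\mathbb V(c_j,1) = \mathbb V_j$ the Lemma has already yielded $\mathbb V_j(d_j,1) = \mathbb C d_j$; on $\mathbb V(c_j,1/2)$ the identity $L(c_j,c_j) = L(d_j,d_j) + L(\bar d_j,\bar d_j)$ (which follows by bilinearly unfolding $c_j = d_j + \bar d_j$ and using the within-pair orthogonality $L(d_j,\bar d_j) = L(\bar d_j,d_j) = 0$) displays $L(d_j,d_j)$ and $L(\bar d_j,\bar d_j)$ as commuting operators with eigenvalues in $\{0,1/2,1\}$ whose sum is $\tfrac{1}{2}\,\mathrm{Id}$, thereby excluding eigenvalue~$1$. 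Combining the three Peirce pieces, $\mathbb V(d_j,1) = \mathbb C d_j$, so $d_j$ is primitive in $\mathbb V$; the argument for $\bar d_j$ is identical.

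Finally, $c := c_1 + \dots + c_r = \sum_j (d_j + \bar d_j)$ is a maximal tripotent of $V$, so $\mathbb V(c,0)$, being the complexification of $V(c,0)=0$, is also zero, making $c$ maximal in $\mathbb V$ as well. Hence $(d_1,\bar d_1,\dots,d_r,\bar d_r)$ is a maximal orthogonal family of primitive tripotents, i.e.~a Jordan frame, proving that $\mathbb V$ has rank $2r$; simplicity of $\mathbb V$ as a PHJTS follows from the earlier proposition, since the simple non-reduced $V$ admits no complex structure. I expect the main obstacle to be the orthogonality step, where the correct move is to recognize the Peirce incidence $\mathbb V(c_j,1) \subset \mathbb V(c_k,0)$ and invoke the strong Peirce rule rather than attempting a direct expansion with the explicit formulas for $d_j$ and $d_k$, which becomes considerably messier.
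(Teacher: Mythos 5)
Your proposal is correct and follows the paper's proof exactly in its main idea: apply the preceding Lemma inside each Peirce space $V(c_j,1)$ to split $c_j=d_j+\overline d_j$, then assemble the $d_j,\overline d_j$ into a Jordan frame of $\mathbb V$. The paper states the final assembly step without proof, whereas you supply the (correct) verifications of cross-orthogonality via the strong Peirce rule, primitivity of each $d_j$ in all of $\mathbb V$, and maximality of the resulting family; this is a welcome amplification rather than a different route.
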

\begin{proof} Let $(c_1,c_2,\dots, c_r)$ be a Jordan frame in $V$. Then for any $j,1\leq j\leq r$ $V_2(c_j)$ is a PJTS of Jordan algebra type and rank $1$ (as $c_j$ is primitive) and is not reduced. By the previous Lemma, there exist $d_j, \overline {d_j}$ two orthogonal primitive tripotents in $\mathbb V_2(c_j)$ such that $c_j=d_j+\overline d_j$. Hence $(d_1,\overline{d_1},\dots, d_r,\overline{d_r})$ is a Jordan frame of $\mathbb V$ and the statement follows.

\end{proof}

For $1\leq k\leq r$ let  $\overline k = k+r$ and $d_{\overline k} = \overline{d_k}$. The Pierce decomposition of $V$ w.r.t. $(c_1,c_2,\dots, c_r)$ and the Pierce decomposition of $\mathbb V$ w.r.t. $(d_1, d_{\overline 1}, \dots, d_r, d_{\overline r})$ are connected by the following relations :
\medskip

$\bullet \quad V_{ii}^\mathbb C = \mathbb C\, d_i \oplus \mathbb C\, \overline{d_i}\oplus \mathbb V_{i\overline i}$

\medskip

$\bullet \quad V_{ij}^\mathbb C = \mathbb V_{ij}\oplus \mathbb V_{i \overline j} \oplus \mathbb V_{\overline i j} \oplus V_{\overline i \,\overline j}$

\medskip

$\bullet \quad V_{i0}^\mathbb C = \mathbb V_{io} \oplus \mathbb V_{\overline i 0}\ $
\medskip

\noindent
for $1\leq i<j\leq r$. The corresponding dimensions (= characteristic numbers) are related by
\begin{equation}  b=2\,b_\mathbb C,\quad c= 2+a_\mathbb C\ .
\end{equation}
and $a=0$ if $r=1$, $a=4 a_\mathbb C$ if $r\leq 2$.
Further, the genus $p$ is equal to $2\,g$, as
\[p= 2c+(r-1)a+b = 2(2+a_\mathbb C) + (r-1) 4a_\mathbb C +2 b_\mathbb C = 2\big( 2 +(2r-1)a_\mathbb C + b_\mathbb C) = 2 g\ .
\]
For the proof of the next proposition, the following elementary lemma will be useful.

\begin{lemma}\label{orb}
 Let $V$ be a PJTS. Any element $x$ of $V$ is conjugate under $Str(V)_0$ to a tripotent. If all eigenvalues of $x$ are strictly positive, then $x$ is conjugate under $Str(V)_0$ to a maximal tripotent.

\end{lemma}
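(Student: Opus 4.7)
The plan is to combine the spectral theory of a PJTS with the action, on a Jordan frame, of a maximal torus inside $Str(V)_0$. First I would invoke the spectral theorem for $V$: there exist a Jordan frame $(c_1,\dots,c_r)$ and real numbers $t_1,\dots,t_r$ (the eigenvalues of $x$) such that $x=\sum_{j=1}^r t_j c_j$. Equivalently, the $\Aut(V)_0$-orbit of $x$ meets the maximal flat $A=\bigoplus_j \mathbb{R}\,c_j$; this is the PJTS translation of the standard Cartan-type decomposition $\Ad(K^o_0)\cdot\mathfrak{a}=\mathfrak{s}$ already underlying the discussion of maximal flats in Section~2. Since $\Aut(V)_0\subset Str(V)_0$, this first reduction is realized inside $Str(V)_0$.

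Second, I would use the commuting one-parameter subgroups $g_j(s):=\exp\bigl(s\,L(c_j,c_j)\bigr)\in Str(V)_0$, well defined because $L(c_j,c_j)\in\mathfrak{str}(V)$. On the frame itself, $g_j(s)c_j=e^s c_j$ and $g_j(s)c_i=c_i$ for $i\neq j$, since $c_j\in V(c_j,1)$ and $c_i\in V(c_j,0)$ for $i\neq j$ by the Peirce decomposition with respect to $c_j$. The $g_j$'s mutually commute, and their product $g:=\prod_j g_j(s_j)\in Str(V)_0$ sends $x$ to $\sum_j e^{s_j}t_j\,c_j$. Choosing $s_j=-\log|t_j|$ when $t_j\neq 0$ and $s_j=0$ otherwise produces $gx=\sum_j \epsilon_j c_j$, where $\epsilon_j:=\mathrm{sign}(t_j)\in\{-1,0,+1\}$.

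Finally, I would verify that $c:=\sum_j \epsilon_j c_j$ is a tripotent. By pairwise orthogonality of the $c_j$'s, $\{c_i,c_j,c_k\}$ vanishes unless $i=j=k$, in which case it equals $c_i$, so $\{c,c,c\}=\sum_j \epsilon_j^3 c_j=c$. This settles the first claim. For the second, if all eigenvalues $t_j$ are strictly positive then every $\epsilon_j=+1$, and $gx=c_1+\dots+c_r$ is maximal since its Peirce $0$-space $V(c,0)$ is trivial. The main delicate point is the first step: one must ensure that the conjugation of $x$ into a fixed maximal flat can be carried out inside the identity component $\Aut(V)_0$ rather than merely $\Aut(V)$, which follows from the standard connected Cartan decomposition of the tangent space $\mathfrak{s}\simeq V$.
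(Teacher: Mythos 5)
Your proposal is correct and follows essentially the same route as the paper: write $x=\sum_j t_jc_j$ via the spectral decomposition and rescale the coefficients by exponentiating the commuting operators $L(c_j,c_j)\in\mathfrak{str}(V)$, which fix the other frame elements. Your treatment is in fact slightly more careful than the paper's, since you handle possibly negative coefficients by passing to $\sum_j\mathrm{sign}(t_j)\,c_j$ (still a tripotent), whereas the paper implicitly takes the $t_j$ positive.
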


\begin{proof} Let $(c_1,c_2,\dots, c_r)$ be a Jordan frame of $V$ and let $x=\sum_{j=1}^r t_j c_j$. Let $J=\{j\in \{ 1,2,\dots, r\}, t_j\neq 0\}$. For $j\in J$, $T_j= L(c_j,c_j)$ belongs to $\mathfrak{str}(V)$, $T_j c_j = c_j$, and for $i\neq j$, $ T_j c_i = 0$. Moreover, the operators $T_j$ mutually commute. Hence 
\[ \exp \big(-\sum_{j\in J} \log (t_j)\, T_j\big)(\sum_{j\in J} t_j c_j) = \sum_{j\in J} c_j\ .
\]
The first statement follows. The second statement corresponds to the case where $J = \{1,2,\dots,r\}$.
\end{proof}
\noindent
{\bf Remark} The orbits of $Str(V)_0$ in a simple PJTS $V$ are described in \cite{fkklr} Part II, Theorem II.2.5.

\begin{proposition}\label{posNR}
 Let $V$ be a simple non-reduced PJTS. Then the complex fundamental kernel $h$ of its Hermitification $\mathbb V$ satisfies $h(x,y)\geq 0$ for all $x,y\in V\times V$.
\end{proposition}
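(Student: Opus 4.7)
The plan is to combine the explicit product formula \eqref{hdouble} for the complex fundamental kernel $h$ on a Jordan frame of $\mathbb V$ with the conjugate-pair structure of that frame in the non-reduced case (Proposition \ref{NRJF}), and then to extend to arbitrary pairs by the covariance of $h$ together with a Peirce reduction.

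\textbf{Base computation on the complex flat.}  Fix a Jordan frame $(c_1,\dots,c_r)$ of $V$ and the associated Jordan frame $(d_1,\overline{d_1},\dots,d_r,\overline{d_r})$ of $\mathbb V$ supplied by Proposition \ref{NRJF}, with $c_j=d_j+\overline{d_j}$; the conjugation $\alpha$ of $\mathbb V$ defining $V$ as a real form swaps $d_j\leftrightarrow\overline{d_j}$.  An element $x\in V$ lying in the complex flat $\mathbb A:=\bigoplus_j(\mathbb C d_j\oplus\mathbb C\overline{d_j})$ of $\mathbb V$ is of the form $x=\sum_j(u_jd_j+\overline{u_j}\overline{d_j})$ with $u_j\in\mathbb C$.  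For two such $x,y\in V\cap\mathbb A$ with parameters $u_j,v_j$, the product formula \eqref{hdouble} applied to the rank-$2r$ Jordan frame of $\mathbb V$ yields
\[ h(x,y)=\prod_{j=1}^{r}(1+u_j\overline{v_j})(1+\overline{u_j}v_j)=\prod_{j=1}^{r}|1+u_j\overline{v_j}|^2\geq 0. \]
The essential point is that the conjugate pairing of the Jordan frame of $\mathbb V$ forces each factor of $h$ to appear with its complex conjugate, which is what the non-reduced hypothesis buys us.

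\textbf{Extension.}  For arbitrary $(x,y)\in V\times V$, Lemma \ref{orb} and the covariance $h(gx,\sigma(g)y)=h(x,y)$ — valid for $g\in Str(V)\subset Str(\mathbb V)$, since $Str(V)$ is stable under $g\mapsto\sigma(g)$ — allow us to replace $x$ by a tripotent $e_J:=\sum_{j\in J}c_j$ (which already lies in $V\cap\mathbb A$).  With $x=e_J$, introduce the Peirce decomposition $V=V_2(e_J)\oplus V_1(e_J)\oplus V_0(e_J)$ and write $y=y_2+y_1+y_0$.  A block-matrix analysis of $C(e_J,y)=\mathrm{Id}+2L(e_J,y)+Q(e_J)Q(y)$, in the spirit of the matrix calculation used to prove Proposition \ref{cV2}, exhibits $C(e_J,y)$ as block lower-triangular with respect to $V_2\oplus V_1\oplus V_0$ (one uses the Peirce vanishing rules, notably $\{V_2(e_J),V_2(e_J),V_0(e_J)\}=0$, together with the fact that $Q(y_2)$ is supported on $V_2$).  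Identifying the diagonal blocks and taking determinants gives, after extracting the $g$-th root, the identity $h(e_J,y)=h_{V_2(e_J)}(e_J,y_2)$.  This reduces the question to the sub-PJTS $V_2(e_J)$, which is itself simple, non-reduced, of tube type, and of rank $|J|\leq r$.  An induction on the rank then closes, the base case being $V$ of rank $1$, i.e. $V=V^{(n)}$ by the lemma preceding Proposition \ref{NRJF}: here $Aut(V^{(n)})=O(n)$ acts transitively enough that one may first rotate $x$ to $|x|e_1$ and then use the residual $O(n-1)$ acting on $\mathrm{span}(e_2,\dots,e_n)$ to bring $y$'s perpendicular component along $e_2$, so that both $x$ and $y$ lie in $\mathrm{span}(e_1,e_2)=V^{(n)}\cap\mathbb A$, on which the base computation applies.

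\textbf{Main obstacle.}  The technical heart of the argument is the Peirce reduction identity $h(e_J,y)=h_{V_2(e_J)}(e_J,y_2)$.  Proposition \ref{cV2} establishes the analogous identity for the canonical kernel only under the assumption that the tripotent is \emph{maximal} (so $V_0=0$) and with $y$ already concentrated in $V_2$; extending it to a general tripotent $e_J$ and arbitrary $y$ requires the supplementary Peirce vanishing rules, a careful identification of the $V_1$ and $V_0$ block contributions to $\det C$ (the extra factors on $V_1$ conspire to give precisely the power of the $V_2$-kernel needed to absorb them after extracting the $g$-th root), and a continuity argument to fix the sign of the root.  Once this step is in place, the induction on rank and the base case for $V^{(n)}$ are routine.
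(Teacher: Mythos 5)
Your base computation on $V\cap\mathbb A$ and the use of $Str(V)$-covariance to move $x$ onto a tripotent $e_J$ are sound, and the Peirce-reduction identity $h(e_J,y)=h_{V_2(e_J)}(e_J,y_2)$ is true (though note that for a non-maximal $e_J$ the ranks of $\mathbb V$ and $\mathbb V_2(e_J)$ differ, so the mechanism behind Proposition \ref{V22} --- a regular pair of $\mathbb V_2$ is regular in $\mathbb V$ --- is unavailable, and $\det C(e_J,y)$ is the $p$-th power of $h$ for the genus of $V$, not of $V_2(e_J)$; so this step is more than a block-triangular computation, as you partly acknowledge). The fatal problem lies elsewhere.

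The induction on the rank does not close. After the reduction you must prove $h_{W}(e,y')\ge 0$ where $W=V_2(e_J)$ is simple, non-reduced, of tube type and of rank $|J|$, $e=e_J$ is its unit and $y'=y_2$ is an \emph{arbitrary} element of $W$. The inductive hypothesis applies only when $|J|<r$; but $|J|=r$ happens exactly when $x$ has no zero eigenvalue, i.e.\ on a dense open set of $x$'s, so this case can neither be ignored nor recovered by continuity (the degenerate set is closed and nowhere dense, the wrong way around). Worse, if $V$ is itself of tube type and $x$ is invertible, then $e_J$ is a maximal tripotent and $V_2(e_J)=V$: the reduction returns verbatim the statement ``$h(e,y)\ge 0$ for the unit $e$ and arbitrary $y\in V$'', so the argument is circular. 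Concretely, for $V=\mathrm{Aherm}(r,\mathbb H)$ or $\mathbb H^{r\times r}$ with $r\ge 2$, nothing in your proposal establishes $h(e,y)\ge 0$ for generic $y$: the product formula covers only the $2r$-dimensional slice $V\cap\mathbb A$, and the $O(n)$-transitivity trick that saves rank one has no analogue in higher rank. What is missing is a device for handling the \emph{second} argument when the first is already a (maximal) tripotent; the paper does this by applying the second half of Lemma \ref{orb} to $y$, writing $y=\sigma(g)c'$ with $c'$ a maximal tripotent when all eigenvalues of $y$ are strictly positive, so that $h(x,y)=h(g^{-1}x,c')$, reducing to $h(c',\cdot)\ge 0$ and finishing by density and continuity. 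Some step of this kind --- bringing $e$ and a general $y$ simultaneously into a position where the frame formula \eqref{hdouble} applies --- is indispensable and is precisely what your induction skips.
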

\begin{proof} Let $(c_1,c_2,\dots, c_r)$ be a Jordan frame of $V$. Let $x=\sum_{j=1} x_jc_j$ and $y=\sum_{j=1} y_j c_j$, where $x_j,y_j\in \mathbb R$. Let $(d_1,\overline {d_1}, \dots, d_r,\overline {d_r})$ be an associated Jordan frame of $\mathbb V$ as in Proposition \ref{NRJF}. Then $x= \sum_{j=1}^r x_jd_j + x_j \overline {d_j}$ and $y=  \sum_{j=1}^r y_jd_j + y_j \overline {d_j}$. By Proposition \ref{minC} and \eqref{hdouble}, 
\[h(x,y) = \prod_{j=1}^r (1+x_jy_j)^2,\]
hence $h(x,y)\geq 0$. In other words, the property to be proven is true if $x$ and $y$ belong to a common maximal flat subspace of $V$. Let $c$ be a maximal tripotent in $V$. For any $x$ in $V_2(c)$, there exists a Jordan frame $(c_1,c_2,\dots, c_r)$ such that $c=c_1+c_2+\dots c_r$ and $x= \sum_{j=1}^r x_j c_j$, so $x$ and $c$ belong to a common maximal flat. Hence, $h(c,x)\geq 0$. Let now $x$ be in $V$, and let $x=x_2+x_1$ where $x_2\in V_2(c)$ and $x_1\in V_1(c)$. Then, by \eqref{V2} $h(c,x_2+x_1) = h(c,x_2)\geq 0$. Now for any $g\in Str(V)$, any $x\in V$ and any maximal tripotent $c$
\[ h(x,gc)= h(\sigma(g)x,c)\geq 0. \]
Thus, if $y$ has all its eigenvalues strictly positive, Lemma \ref{orb} shows that $h(x,y)\geq 0$. As the set of elements with all eigenvalues strictly positive is an open dense set in $V$, the result follows by continuity.
\end{proof}

\section{Bernstein-Sato identity for the complex fundamental kernel of a PHJTS of tube type}

Let  $J$ be a simple Euclidean Jordan algebra, with neutral element $e$. Let  $D=D(x,\frac{\partial}{\partial x})$ be a linear differential operator  on some open subset $\mathcal O$ with $\mathcal C^\infty$ coefficients. Define its \emph{symbol} as the function $\sigma_D$ on $\mathcal O\times J$ defined by
\[ D\, e^{(x,\,\xi)} = \sigma_D(x,\xi)\, e^{(x,\,\xi)}\ .\]
The symbol is a polynomial in $\xi$, whose coefficients are $\mathcal C^\infty$ functions of $x$.

Let $g$ be a linear transformation of $J$, and denote by $L(g)$ the left action on $\mathcal C^\infty(J)$ defined by $L(g)f = f\circ g^{-1}$. A differential operator $D$ is invariant under $g$ (i.e. commutes with $L(g)$) if and only if $\sigma_D (gx,\sigma(g)\xi) = \sigma_D(x,\xi)$,
where $\sigma(g) = ({g^t})^{-1}$.

Let $\Omega$ be the \emph{open cone of squares} of $J$ (see \cite{fk} for details), and denote by $H$ the connected component of the group $G(\Omega)$ (equivalently of $Str(J)$). By Proposition \ref{PHJTSJA}, to each $Aut(J)_0$-invariant polynomial $p$ on $J$, one associates the $H$-invariant differential operator $q(x,\frac{\partial}{\partial x})$. We may apply this to the  coefficients $a_1,\dots, a_r$ of the generic minimal polynomial.

Let $M_0=\Id$, and for $1\leq k\leq r$, denote by $M_k$ the $H$-invariant differential operator  on $J$ such that 
\[ \sigma_{M_k} (e,\xi) = a_k(\xi)\ .\]
(cf \cite{fk} ch. XIV). Although we won't use the result, notice that the $M_k$ generate the algebra of $H$-invariant differential operators on $J$.

Let $\Omega$ be the \emph{open cone of squares} in $J$ (see \cite{fk}). Recall that $\Delta(x)>0$ for $x\in \Omega$.

A large family of $H$-invariant differential operators on $J$ is provided by the following result (see \cite{fk} Proposition XIV.1.5).

\begin{proposition} let $s$ be a complex number, and let $D_s$ be the differential operator defined (a priori on $\Omega$) by
\[ D_s = \Delta(x)^{1+s}\circ \Delta\big(\frac{\partial}{\partial x}\big)\circ \Delta(x)^{-s}\ .\]
Then 

$i)$ $D_s$ extends to a differential operator on $J$ with polynomial coefficients

$ii)$ $D_s$ is $H$-invariant

$iii)$ 
\begin{equation}
 D_s= \sum_{k=0}^r (-1)^k  \prod_{j=1}^k \big(s-(j-1){\frac{d}{2}}\big) M_{r-k}\ .
 \end{equation}

\end{proposition}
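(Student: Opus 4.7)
The plan is to prove (ii) directly from the relative invariance of $\Delta$, and to handle (i) and (iii) simultaneously by comparing $D_s$ with the right-hand operator
\[
\widetilde D_s \;:=\; \sum_{k=0}^r (-1)^k \prod_{j=1}^k\bigl(s-(j-1)\tfrac{d}{2}\bigr)\, M_{r-k}
\]
via their common action on the family $\{\Delta(x)^t : t \in \mathbb C\}$ of power functions on $\Omega$.

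For (ii): since $\Delta$ is the Jordan determinant of $J$, it is a relative invariant of $H = G(\Omega)^0$, i.e.\ $\Delta(gx) = \chi(g)\Delta(x)$ for $g \in H$ and a character $\chi$. Under conjugation by $L(g)$ the multiplication operator $\Delta(x)^{\pm s}$ picks up the scalar factor $\chi(g)^{\mp s}$, while the chain rule $\partial_x(f\circ g)(x) = g^{t}(\partial_y f)(gx)$ together with $\chi(g^{t}) = \chi(g)$ yields $L(g)\,\Delta(\partial/\partial x)\, L(g)^{-1} = \chi(g)\,\Delta(\partial/\partial x)$. The three scalars telescope: $\chi(g)^{-(1+s)} \cdot \chi(g) \cdot \chi(g)^{s} = 1$, so $D_s$ commutes with every $L(g)$, $g \in H$.

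For (iii), I would test both $D_s$ and $\widetilde D_s$ on $\Delta(x)^t$. The Bernstein--Sato identity $\Delta(\partial/\partial x)\,\Delta(x)^u = b(u)\,\Delta(x)^{u-1}$, with $b(u) = \prod_{j=0}^{r-1}(u - j\tfrac{d}{2})$ (Faraut--Koranyi \cite{fk}, Proposition VII.1.4), immediately gives
\[
D_s \,\Delta(x)^t \;=\; b(t-s)\,\Delta(x)^t.
\]
On the other side, the $H$-invariance of each $M_k$ together with the transitivity of $H$ on $\Omega$ forces $M_k \Delta^t = c_k(t)\,\Delta^t$ for a polynomial $c_k(t)$, which is identified by evaluating at $e$: Taylor-expanding $\Delta(e+y)^t = \prod_{i=1}^r(1+\lambda_i(y))^t$ using the spectral decomposition at $e$, and reading off the coefficients against $\sigma_{M_k}(e,\xi) = a_k(\xi)$, gives $c_k(t)$ as the elementary symmetric polynomial in a shifted sequence $\lambda_j = t - (j-1)\tfrac{d}{2}$. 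The required identity
\[
b(t-s) \;=\; \sum_{k=0}^r (-1)^k \prod_{j=1}^k\bigl(s-(j-1)\tfrac{d}{2}\bigr)\, c_{r-k}(t)
\]
then reduces to recognizing both sides as expansions of the same product $\prod_{j=1}^r(\lambda_j - s)$, a polynomial identity in $s,t$ verified by the usual Vi\`ete relations.

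With the operator identity established on $\Omega$ (where both sides are $H$-invariant differential operators agreeing on the test family $\Delta^t$, which is enough to determine an $H$-invariant operator by the transitivity of $H$ on $\Omega$ and density), statement (i) follows as a corollary: since $\widetilde D_s$ has manifestly polynomial coefficients on all of $J$, so does $D_s$, and the identity of the two persists by continuation. The main obstacle I anticipate is the explicit identification of $c_k(t)$ with the shifted elementary symmetric polynomial: the Taylor expansion at $e$ mixes $a_k$ with contributions from the non-principal Peirce subspaces, and careful bookkeeping of the $d/2$-shifts (``generalized binomial coefficients'' of Jordan theory, \cite{fk} Ch.\ XII) is needed. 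An induction on the rank $r$ provides an alternative: the case $r=1$ gives $D_s = x\partial_x - s$ by inspection, and the inductive step uses the Peirce decomposition relative to a primitive tripotent to factor out one rank and invoke the formula on the subalgebra of rank $r-1$.
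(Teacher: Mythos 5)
The paper offers no proof of this proposition: it is Proposition XIV.1.5 of Faraut--Kor\'anyi and is simply cited, so your attempt can only be judged on its own merits. Your part (ii) is correct and standard. The central step of your argument for (iii), however, contains a genuine gap: for $r\ge 2$ an $H$-invariant differential operator on $\Omega$ is \emph{not} determined by its eigenvalues on the one-parameter family $\{\Delta^t\}_{t\in\mathbb C}$. Transitivity of $H$ on $\Omega$ determines an invariant operator from its full symbol at $e$, which is a polynomial in $n=\dim J$ variables, not from a function of the single variable $t$; the eigenvalue map $D\mapsto\big(t\mapsto D\Delta^t/\Delta^t\big)$ has a large kernel. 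Concretely, for $r=2$ one has $\sigma_{M_1}(x,\xi)=(x|\xi)$, so $M_1$ is the Euler operator and $M_1\Delta^t=2t\,\Delta^t$, while $M_2=\Delta(x)\Delta(\partial/\partial x)$ gives $M_2\Delta^t=t(t+\tfrac d2)\Delta^t$; hence the nonzero invariant operator $M_1^2-4M_2+dM_1$ annihilates every $\Delta^t$. So agreement of $D_s$ and $\widetilde D_s$ on all $\Delta^t$ does not yield $D_s=\widetilde D_s$. To repair this you must either first prove that $D_s$ lies in the $(r+1)$-dimensional span of $M_0,\dots,M_r$ (on which the eigenvalue map \emph{is} injective, since $\deg_t c_k=k$) --- but that is essentially the content of (i) and (iii) --- or test against the full $r$-parameter family of conical functions $\Delta_{\mathbf m}=\Delta_1^{m_1-m_2}\cdots\Delta_r^{m_r}$, which is in substance what the computation of the symbol of $D_s$ at $e$ in Faraut--Kor\'anyi amounts to.

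Two further slips would derail the computation even if the reduction were valid. First, your Bernstein--Sato polynomial has the wrong sign: the paper and FK VII.1.4 give $b(u)=u(u+\tfrac d2)\cdots(u+(r-1)\tfrac d2)$, and with your $b(u)=\prod_{j}(u-j\tfrac d2)$ the target identity already fails at $t=0$, where $D_s1=b(-s)$ must match the $M_0$-coefficient $(-1)^r\prod_{j=1}^r\big(s-(j-1)\tfrac d2\big)$ of the claimed formula. Second, the eigenvalues $c_k(t)$ are not the elementary symmetric polynomials of the sequence $t-(j-1)\tfrac d2$: one has $c_k(0)=M_k1=0$ for $k\ge1$, whereas $e_k\big(0,-\tfrac d2,\dots,-(r-1)\tfrac d2\big)\ne0$ in general (already $c_1(t)=rt$, not $rt-\binom r2\tfrac d2$). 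The correct statement is that the $(-1)^k c_{r-k}(t)$ are the coefficients of $b(t-s)$ in the Newton basis $\prod_{j=1}^{k}\big(s-(j-1)\tfrac d2\big)$, so the final identity is a divided-difference expansion rather than a Vi\`ete expansion in powers of $s$. Your rank-one verification and the invariance argument are fine as far as they go.
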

See \cite{fk} ch. XIV.

Let $\iota$ be the inversion on $\Omega$ defined by $\iota(x) = x^{-1}$. Whenever $D$ is a $H$-invariant operator on $\Omega$, then the differential operator $D^{\,\iota}$ defined by
\[D^{\,\iota} f = \big(D(f\circ \iota)\big)\circ \iota
\]
can be shown to be also $H$-invariant. In particular, we have the following result.

\begin{equation}\label{dinv}
 D_s^{\,\iota} = (-1)^r D_t\ ,
 \end{equation}
where $t = \frac{d}{2}(r-1) -s$. See \cite{fk} proposition XIV.1.8.

Recall the \emph{Bernstein-Sato identity} for the determinant polynomial of the Euclidean Jordan algebra $J$.

\begin{proposition}  Let $s$ be a complex number. For $x$  in $\Omega$\footnote{ As $\Delta(x)>0$ for $x\in \Omega$, $\Delta(x)^s$ is defined without ambiguity.}
 \begin{equation}\label{bsomega}
\Delta(\frac{\partial }{\partial x}) \Delta^s (x) = b(s) \Delta(x)^{s-1}\ ,
\end{equation}
where
\begin{equation}\label{b}
b(s) = b_{r,d} (s) = s(s+\frac{d}{2})\dots(s+(r-1)\frac{d}{2})\ .
\end{equation}
\end{proposition}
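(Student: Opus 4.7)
The plan is to prove the Bernstein--Sato identity in two steps: first, use $H$-covariance and the transitivity of $H$ on $\Omega$ to reduce to the form $\Delta(\partial/\partial x)\Delta(x)^s = P(s)\Delta(x)^{s-1}$ for some scalar function $P(s)$; second, compute $P(s)$ explicitly via the Laplace transform on $\Omega$ and the Gindikin gamma function, obtaining $P(s) = b(s)$.

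For the first step, recall that $\Delta$ is a relative invariant of $H = G(\Omega)_0$ with character $\chi$: $\Delta(gx) = \chi(g)\Delta(x)$ for $g \in H$. A chain-rule computation, combined with the fact that for Euclidean Jordan algebras $\Delta(g^{-t}\xi) = \chi(g)^{-1}\Delta(\xi)$ for $g \in H$ (a consequence of the self-adjointness of $H$ and the symmetry $\chi(g^t)=\chi(g)$), yields that the function $F_s(x) := \Delta(\partial/\partial x)\Delta(x)^s$ transforms as $F_s(gx) = \chi(g)^{s-1}F_s(x)$. Since $H$ acts transitively on $\Omega$, the ratio $F_s(x)/\Delta(x)^{s-1}$ is constant on $\Omega$, giving $F_s(x) = P(s)\Delta(x)^{s-1}$ with $P(s) = F_s(e)$. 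The function $P(s)$ is entire in $s$, since $\Delta(x)^s$ is entire in $s$ for each $x \in \Omega$.

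To compute $P(s)$, I use the Gindikin integral representation
\[\Delta(\xi)^{-s} = \frac{1}{\Gamma_\Omega(s)}\int_\Omega e^{-(\xi,x)}\Delta(x)^{s - n/r}\,dx, \qquad \xi \in \Omega,\ \Re s > (r-1)d/2,\]
where $\Gamma_\Omega(s) = (2\pi)^{(n-r)/2}\prod_{j=1}^r \Gamma(s-(j-1)d/2)$ is the Gindikin gamma function of $\Omega$. Applying $\Delta(\partial/\partial\xi)$ under the integral sign and using $\Delta(\partial/\partial\xi)e^{-(\xi,x)} = (-1)^r\Delta(x)e^{-(\xi,x)}$ (which holds because $\Delta$ is homogeneous of degree $r$), one obtains
\[\Delta(\partial/\partial\xi)\Delta(\xi)^{-s} = (-1)^r\,\frac{\Gamma_\Omega(s+1)}{\Gamma_\Omega(s)}\,\Delta(\xi)^{-s-1} = (-1)^r\prod_{j=1}^r\Big(s-(j-1)\frac{d}{2}\Big)\,\Delta(\xi)^{-s-1}.\]
Substituting $s \to -s$ and collecting signs yields $\Delta(\partial/\partial x)\Delta(x)^s = \prod_{j=1}^r (s+(j-1)d/2)\,\Delta(x)^{s-1} = b(s)\Delta(x)^{s-1}$, first for $\Re s < -(r-1)d/2$, and then for all $s \in \mathbb C$ by analytic continuation, since both sides are entire in $s$ for fixed $x \in \Omega$.

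The main obstacle will lie in the careful handling of two technical points: the precise $H$-covariance of the constant-coefficient differential operator $\Delta(\partial/\partial x)$, which requires the self-adjointness of the structure group together with the invariance of $\chi$ under transposition, and the justification of differentiation under the integral sign in the Gindikin representation. With those settled, the identification $P(s) = b(s)$ is then automatic from the recursion $\Gamma_\Omega(s+1)/\Gamma_\Omega(s) = \prod_{j=1}^r (s-(j-1)d/2)$.
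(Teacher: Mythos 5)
Your argument is correct: the reduction by $H$-covariance to $F_s = P(s)\Delta^{s-1}$ and the identification $P(s)=b(s)$ via the Gindikin integral $\Gamma_\Omega(s)\Delta(\xi)^{-s}=\int_\Omega e^{-(\xi,x)}\Delta(x)^{s-n/r}dx$ together with analytic continuation in $s$ are all sound, with the signs and the recursion $\Gamma_\Omega(s+1)/\Gamma_\Omega(s)=\prod_{j=1}^r(s-(j-1)\tfrac d2)$ handled correctly. The paper itself gives no proof but cites Faraut--Kor\'anyi, Proposition VII.1.4, and your computation is essentially that standard proof (the covariance step is in fact redundant, since the Laplace-transform computation alone already yields the identity).
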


The result is proved in \cite{fk} Prop. VII.1.4. 

\begin{proposition} For $s$ in $\mathbb C$, let $s^* = s+\frac{d}{2}(r-1)$. Then, for $x$ in $\Omega$
\begin{equation} \label{BS(e+x)}
(-1)^r\,\Delta_{s^*} \big(\Delta(e+x)^s\big) = b(s) \Delta (e+x)^{s-1}\ .
\end{equation}
\end{proposition}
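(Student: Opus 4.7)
The plan is to deduce the identity from the classical Bernstein–Sato identity \eqref{bsomega} together with the inversion formula \eqref{dinv} for $D_s$, using the fact that the function $\Delta(e+x)^s$ transforms very simply under the Jordan-algebra inversion $\iota(x)=x^{-1}$.

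First I would specialize the inversion formula. Taking $s$ in \eqref{dinv} to be $s^{*}=s+\tfrac{d}{2}(r-1)$, the value $t=\tfrac{d}{2}(r-1)-s^{*}$ collapses to $t=-s$, so
\[D_{s^{*}}^{\,\iota}=(-1)^r D_{-s},\qquad \text{equivalently}\qquad (-1)^r D_{s^{*}}=D_{-s}^{\,\iota},\]
the second identity being obtained by applying $\iota$ once more and using the involutivity $(D^{\iota})^{\iota}=D$. Thus the left-hand side of \eqref{BS(e+x)} is $D_{-s}^{\,\iota}\!\bigl(\Delta(e+x)^{s}\bigr)$, and the whole task reduces to computing this.

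The key simplification is the identity $\Delta(e+y^{-1})=\Delta(y)^{-1}\,\Delta(e+y)$ for $y\in\Omega$. Since $y$ and $e+y^{-1}$ both lie in the associative, commutative subalgebra $\mathbb{R}[y]$, the determinant is multiplicative on the Jordan product $y\cdot(e+y^{-1})=e+y$, which yields the claim. Setting $f(x)=\Delta(e+x)^{s}$, this gives
\[(f\circ\iota)(y)=\Delta(e+y^{-1})^{s}=\Delta(y)^{-s}\Delta(e+y)^{s}.\]
Now, applying $D_{-s}=\Delta(y)^{1-s}\!\circ\Delta(\partial/\partial y)\!\circ\Delta(y)^{s}$ to $f\circ\iota$, the factor $\Delta(y)^{s}$ cancels the factor $\Delta(y)^{-s}$ in $f\circ\iota$, leaving
\[D_{-s}(f\circ\iota)(y)=\Delta(y)^{1-s}\,\Delta\bigl(\tfrac{\partial}{\partial y}\bigr)\,\Delta(e+y)^{s}.\]
Because $\Delta(\partial/\partial y)$ has constant coefficients, it commutes with the translation $y\mapsto e+y$; combining this with \eqref{bsomega} (applied at $e+y\in\Omega$) yields $\Delta(\partial/\partial y)\Delta(e+y)^{s}=b(s)\Delta(e+y)^{s-1}$.

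Finally, by definition $D_{-s}^{\,\iota}(f)(x)=D_{-s}(f\circ\iota)(x^{-1})$; substituting $y=x^{-1}$ and invoking once more the identity $\Delta(e+x^{-1})=\Delta(x)^{-1}\Delta(e+x)$ produces
\[(-1)^r D_{s^{*}}(f)(x)=\Delta(x)^{s-1}\,b(s)\,\Delta(x)^{-(s-1)}\,\Delta(e+x)^{s-1}=b(s)\,\Delta(e+x)^{s-1},\]
which is precisely \eqref{BS(e+x)}. I do not anticipate a serious obstacle: the only nontrivial input beyond \eqref{bsomega} and \eqref{dinv} is the multiplicativity $\Delta(e+y^{-1})=\Delta(y)^{-1}\Delta(e+y)$ on $\Omega$, and the whole computation is a bookkeeping exercise designed to make the inversion trick convert the Bernstein–Sato identity at the origin into the analogous identity at the unit element $e$.
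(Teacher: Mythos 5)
Your proposal is correct and follows essentially the same route as the paper: both arguments combine the translation‑invariance of $\Delta(\partial/\partial x)$ with the classical identity \eqref{bsomega}, the factorization $\Delta(e+x)=\Delta(x)\,\Delta(e+x^{-1})$ on $\Omega$, and the inversion formula \eqref{dinv} identifying $(-1)^rD_{s^*}$ with $D_{-s}^{\,\iota}$. The only difference is organizational (you start from the inversion formula and work forward, while the paper massages the Bernstein–Sato identity into the form $D_{-s}(d_s\circ\iota)=b(s)\,(d_{s-1}\circ\iota)$ before invoking \eqref{dinv}), and your explicit justification of the multiplicativity $\Delta(e+y^{-1})=\Delta(y)^{-1}\Delta(e+y)$ is a welcome detail the paper omits.
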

\begin{proof}
Using the invariance of $\Delta\big(\frac{\partial}{\partial x}\big)$ under translations, for $x\in \Omega$,
\[\Delta\big(\frac{\partial}{\partial x}\big) \Delta(e+x)^s= b(s) \Delta(x+e)^{s-1}\ .\]
 Recall the identity, valid for $x$ invertible (in particular for $x$ in $\Omega$) :

\[\Delta(e+x) = \Delta (x) \Delta(e+x^{-1}) \ .
\]
Hence
\[\Delta(x)^{-s+1}\Delta(\frac{\partial}{\partial x})\, \big(\Delta(x)^s \Delta(e+x^{-1})^s\big) = b(s) \Delta(e+x)^{s-1}\Delta(x)^{-s+1}\ .
\] 
Setting for a while $d_s(x)= \Delta(e+x)^{s} $, the last identity can be rewritten as

\[ D_{-s} (d_s\circ \iota)(x) = b(s)\, d_{s-1}(x^{-1})
\]

Let $s^* = s+\frac{d}{2}(r-1)$. Then, using \eqref{dinv}, the last identity can be rewritten as
\[(-1)^r  D_{s^*}\, \big(d_s(x) \big)=b(s)\, d_{s-1}(x)\ ,
\]
which, up to notation agrees with \eqref{BS(e+x)}. 
\end{proof}

The advantage of this new Bernstein-Sato identity is that now the differential operator involved is invariant under $H$. For sake of simplicity, let $E_s$ be the differential operator with polynomials coefficients on $V$ defined by
\begin{equation}\label{EsMk}
E_s = (-1)^r D_{s^*} =  \sum_{k=0}^r (-1)^{r-k} \big(\prod_{j=1}^k(s+\frac{d}{2}(r-j)\big) M_{r-k}\  .
\end{equation}

Recall the connection between $\Delta$ and the complex canonical kernel $h$ of the Hermitification $\mathbb J$ of $J$ (see proposition \ref{PHJTSJA}).

\begin{lemma} Let $x,y$ in $\Omega$. Then
\[h(x,y)>0\ .
\]
\begin{proof} As $h(e,y) = \Delta(e+y)$ for any $y\in J$, $h(e,y)>0$ for any $y\in \Omega$. Let $g$ be in $H$. Then by invariance of $h$ under the action of $H$, $h(ge, {(g^t)}^{-1} y)>0$ for any $y$ in $\Omega$, hence $h(ge,y)>0$ for any $y\in \Omega$. As $g$ runs through $H$, $ge$ runs through all of $\Omega$. Hence the positivity of $h$ on $\Omega\times \Omega$.
\end{proof}
\end{lemma}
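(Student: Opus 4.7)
My strategy is to reduce the positivity on $\Omega\times\Omega$ to the identity $h(x,e)=\Delta(e+x)$ of Proposition~\ref{PHJTSJA}, and then to propagate by the covariance of $h$ under the structure group together with the classical transitivity of $H$ on the cone~$\Omega$.

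First I would treat the base case. Proposition~\ref{PHJTSJA} supplies the polynomial identity $h(x,e)=\Delta(e+x)$. Combined with the Hermitian symmetry $h(y,x)=\overline{h(x,y)}$ (a consequence of $c_{\mathbb{C}}(y,x)=\overline{c_{\mathbb{C}}(x,y)}$ together with $c_{\mathbb{C}}=h^{g}$) and the fact that $\Delta$ is real-valued on $J$, this gives $h(e,y)=\Delta(e+y)$ for every $y\in J$. Since $\Omega$ is a convex cone containing $e$, the point $e+y$ again lies in $\Omega$ whenever $y\in\Omega$, and the classical positivity of $\Delta$ on $\Omega$ then yields $h(e,y)>0$ for every $y\in\Omega$.

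Next I would spread this positivity to all of $\Omega\times\Omega$ by an orbit argument. The covariance identity $h(gx,\sigma(g)y)=h(x,y)$ of the complex fundamental kernel, applied to $g\in H\subset\mathrm{Str}(\mathbb{J})$ and to $x=e$, reads $h(ge,\sigma(g)y)=h(e,y)>0$ for every $y\in\Omega$. Since $H$ acts transitively on $\Omega$, the element $ge$ runs through all of $\Omega$ as $g$ varies in $H$; and since $\Omega$ is self-dual, $\sigma(g)=(g^{t})^{-1}$ again belongs to $H$ and hence preserves $\Omega$, so $\sigma(g)y$ runs through $\Omega$ as $y$ does. Both slots can therefore be made to sweep out $\Omega$ independently, and $h(x,y)>0$ follows on all of $\Omega\times\Omega$.

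I do not anticipate any serious obstacle: the argument is a two-line reduction (base case plus an orbit move) resting only on standard facts about Euclidean Jordan algebras — the positivity of $\Delta$ on $\Omega$, the transitivity of $H=G(\Omega)_{0}$ on $\Omega$, and the self-duality of $\Omega$. The only small point requiring a moment of care is passing from $h(x,e)>0$ to $h(e,y)>0$, which is why the symmetry $h(y,x)=\overline{h(x,y)}$ (or equivalently a direct Peirce-diagonal computation) is invoked.
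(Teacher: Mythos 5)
Your proof is correct and follows essentially the same route as the paper's: establish $h(e,y)=\Delta(e+y)>0$ for $y\in\Omega$ and then propagate by the $\mathrm{Str}$-covariance of $h$ together with the transitivity of $H$ on $\Omega$. You merely make explicit a few points the paper leaves implicit (the symmetry needed to pass from $h(x,e)$ to $h(e,y)$, and that $(g^{t})^{-1}$ preserves $\Omega$), which is fine.
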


\begin{proposition} For $x,y$ in $\Omega$,
\begin{equation}\label{BSEJA}  E_s(x, \frac{\partial}{\partial x})\,h(x,y)^s = b(s) h(x,y)^{s-1}\ .
\end{equation}

\end{proposition}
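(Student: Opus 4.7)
The plan is to reduce the identity to the special case $y=e$, which is already known: since the preceding proposition gives $h(x,e)=\Delta(e+x)$, the target identity specialized at $y=e$ reads
\[E_s\,h(x,e)^s = b(s)\,h(x,e)^{s-1},\]
which is precisely the modified Bernstein--Sato identity already established for $\Delta(e+x)^s$. The task is then to transport this base case to arbitrary $y\in\Omega$ using the transitive action of $H=\mathrm{Str}(J)_0$ on $\Omega$ combined with the $H$-invariance of $E_s$.

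Concretely, given $y\in\Omega$, I would pick $\phi\in H$ with $\sigma(\phi)(e)=y$; this is possible because $\sigma(H)=H$ still acts transitively on $\Omega$. The covariance of $h$ under $\mathrm{Str}(\mathbb{J})$ then yields
\[h(\phi(x),y)=h(\phi(x),\sigma(\phi)(e))=h(x,e)=\Delta(e+x),\]
and, after substituting $\phi^{-1}(x)$ for $x$ (which remains in $\Omega$), one obtains the pointwise identity
\[h(x,y)=\Delta(e+\phi^{-1}(x))\qquad(x\in\Omega).\]
Since $h(x,y)>0$ on $\Omega\times\Omega$ by the preceding lemma, the complex power $h(x,y)^s$ is well defined without ambiguity.

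Now I would apply $E_s$ to both sides. Writing $L(\phi)f=f\circ\phi^{-1}$ and using that $E_s$ commutes with $L(\phi)$ by the $H$-invariance of each of the operators $M_k$ entering its definition \eqref{EsMk}, I get
\[E_s\bigl[h(\cdot,y)^s\bigr](x)=L(\phi)\,E_s\bigl[\Delta(e+\cdot)^s\bigr](x)=b(s)\,L(\phi)\bigl[\Delta(e+\cdot)^{s-1}\bigr](x)=b(s)\,h(x,y)^{s-1},\]
which is the claim.

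No serious obstacle arises; the argument is essentially a symmetry transport of the base case along the $H$-orbit structure of $\Omega$. The two points one must verify carefully are (i) the self-adjointness $\sigma(H)=H$, so that every $y\in\Omega$ is of the form $\sigma(\phi)(e)$ for some $\phi\in H$, and (ii) the positivity of $h$ on $\Omega\times\Omega$, which is needed to give meaning to complex powers on both sides of the identity.
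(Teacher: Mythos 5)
Your proposal is correct and follows essentially the same route as the paper: both reduce to the base case $y=e$ (where $h(x,e)=\Delta(e+x)$ and the shifted Bernstein--Sato identity \eqref{BS(e+x)} applies), then transport to general $y\in\Omega$ by choosing $g\in H$ with $\sigma(g)e=y$, using the $\mathrm{Str}$-covariance of $h$ and the $H$-invariance of $E_s$. Your explicit attention to $\sigma(H)=H$ and to the positivity of $h$ on $\Omega\times\Omega$ (needed to define the complex powers) is a welcome, if minor, refinement of points the paper leaves implicit.
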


\begin{proof}
Let $y$ be in $\Omega$. As $H$ is transitive on $\Omega$, choose $g\in H$ such that $\sigma(g)e=y$. Then $h(x,y)=h(x,\sigma(g)e) = h(g^{-1} x,e)$. 
Further, by the invariance of $E_s$ under the action of $H$
\[ E_s\, h(x,y)^s =E_s\, h(g^{-1} x,e)^s\]
\[=\big(E_s\, h(.,e)^s\big)(g^{-1}x)=
b(s)\,h(g^{-1} x,e)^{s-1}= b(s)h(x,y)^{s-1}\ .
\]
\end{proof}

We now extend this Bernstein-Sato identity to $\mathbb J$. 

If $D$ is any differential operator on a real vector space $E$ with polynomial coefficients, we extend it as a holomorphic differential operator $\mathbb D$ with holomorphic polynomial coefficients  on the complexification $\mathbb E\otimes_\mathbb R \mathbb C$ as follows : choose coordinates on $E$, and to to $D=x^\beta \frac{\partial^{\vert \alpha\vert}}{\partial x^\alpha}$, where $\alpha$ and $\beta$ are multiindices, associate $\mathbb D =z^\beta\frac{\partial^{\vert \alpha\vert} }{\partial z^\alpha}$, and extend this correspondance linearly. The extension does not depend on the choice of the coordinates on $E$. It satisfies the \emph{restriction principle} : for a holomorphic function $f$ defined on a neighborhood $\mathcal O$ of some point $x$ in $E$,
 \begin{equation}\label{complexD}
 \mathbb D f (x) = D(f_{\vert \mathcal O\cap E})(x)\ 
 \end{equation}
 The symbol $\sigma_{\mathbb D}$ of a holomorphic differential operator $\mathbb D$ is defined by
 \[\mathbb D(z,\frac{\partial}{\partial z}) e^{(z,\zeta)} =\sigma_\mathbb D (z,\zeta) e^{(z,\zeta)}\ ,
 \]
 where the inner product on $J$ is extended to a Hermitian form on $\mathbb J$.
The symbol is a antiholomorphic polynomial is $\xi$. If $\mathbb D$ is the holomorphic extension of a differential operator $D$ on $E$, then $\sigma_\mathbb D$ is the extension of $\sigma_D$ which is holomorphic in $z$ and antiholomorphic in $ \zeta$.

Apply the procedure of holomorphic extension to  the operators $E_s$ and $M_k$ to obtain a holomorphic differential operators $\mathbb E_s$ and $\mathbb M_k$ on $\mathbb J$.

\begin{theorem} Let $z,w$ be in $\mathbb J\times \mathbb J$, and assume that $h(z,w)\neq 0$. The following identity holds :

\begin{equation}\label{BSHJA}
 \mathbb E_s(z, \frac{\partial}{\partial z})\, h(z,w)^s = b(s) h(z,w)^{s-1}\ ,
\end{equation}
where $h(z,w)^s$ and $h(z,w)^{s-1}$ are computed from the same local determination of $\log h(z,w)$.
\end{theorem}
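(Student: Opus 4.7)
The plan is to extend the real Bernstein-Sato identity \eqref{BSEJA} from $\Omega\times\Omega$ to $\mathbb{J}\times\mathbb{J}$ by a two-step analytic-continuation argument: first continue holomorphically in $z$ with $w=y\in\Omega$ held fixed, then antiholomorphically in $w$ with $z$ held fixed. The key inputs are the restriction principle \eqref{complexD}, which guarantees that $\mathbb{E}_s$ applied to a holomorphic function restricts on $J$ to $E_s$ applied to its restriction, and the identity principle on the complement of a complex hypersurface, which is connected in $\mathbb{J}\cong\mathbb{C}^N$.

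For the first step, fix $y\in\Omega$. The polynomial $z\mapsto h(z,y)$ is holomorphic on $\mathbb{J}$, strictly positive on $\Omega$ by the lemma preceding \eqref{BSEJA}, and nonvanishing on the connected open set $U_y=\{z\in\mathbb{J}:h(z,y)\neq 0\}$. On a simply connected tubular neighborhood $W$ of $\Omega$ contained in $U_y$ the real $\log h(\cdot,y)$ extends to a holomorphic branch, making both $\mathbb{E}_s h(z,y)^s$ and $b(s)h(z,y)^{s-1}$ holomorphic on $W$; by the restriction principle together with \eqref{BSEJA} they agree on the totally real open set $\Omega$, so the identity principle forces equality on $W$. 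The identity is then spread to a neighborhood of an arbitrary point $z_0\in U_y$ by continuing the branch of $\log h(\cdot,y)$ along a path from $\Omega$ to $z_0$ in $U_y$ and applying the identity principle on each link of a chain of simply connected open balls covering the path.

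The second step is symmetric in the $w$-variable. Fix $z$ and view the difference of the two sides as a function of $w$: it is antiholomorphic wherever defined, and by the first step it vanishes on the open subset $\Omega$ of the totally real slice $J\subset\mathbb{J}$; hence it vanishes on every connected component of $\{w:h(z,w)\neq 0\}$ that meets $\Omega$, and in fact on all of this set, since the complement of the complex hypersurface $\{h(z,\cdot)=0\}$ is connected. The one subtle point, and the main potential obstacle, is the branch ambiguity of $h(z,w)^s$: continuation along different paths may twist $h^s$ and $h^{s-1}$ by factors $e^{2\pi isk}$ and $e^{2\pi i(s-1)k}$ respectively, which differ by $e^{-2\pi ik}=1$, so both sides of the identity are multiplied by the \emph{same} factor under such a twist. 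The Bernstein-Sato identity is therefore branch-independent, which is exactly the meaning of the clause ``computed from the same local determination of $\log h(z,w)$'' in the statement, and it removes what would otherwise be the only delicate issue in the argument.
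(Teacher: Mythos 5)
Your proposal is correct and follows essentially the same route as the paper: holomorphic continuation in $z$ along paths in the connected set $\{h(\cdot,y)\neq 0\}$ starting from the real identity \eqref{BSEJA} on $\Omega$ via the restriction principle, then antiholomorphic continuation in $w$, with the branch ambiguity dispatched by the observation that $e^{2\pi iks}=e^{2\pi ik(s-1)}$ multiplies both sides identically. Your treatment of the second step and of the totally-real identity principle is in fact slightly more explicit than the paper's, which leaves those details to the reader.
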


\begin{proof}
Let first assume that $w=y$ is in $\Omega$. Let
\[ \mathcal O_y= \{ z\in \mathbb J, h(z,y)\neq 0\}\ .\]
As $(z,y)$ is a holomorphic polynomial in $z$, $\mathcal O_y$ is a pathwise connected open subset of $\mathbb J$, which contains $\Omega$. For any $z_0$ in $\mathcal O_y$, one can select a path $\Gamma$ from $e$ to $z_0$ which is entirely contained in $\mathcal O_y$. By analytic continuation along $\Gamma$, there is a unique  determination of $\log h(z,y)$ in a neighborhood of $\Gamma$ which, for $z=x$ in a neighborhood of $e$, is equal to $\log h(x,y)$, with corresponding determinations of $h(z,y)^s$ and $h(z,y)^{s-1}$. The restriction principle \eqref{complexD} allows to compute the left handside for $z$ in $\Omega$ (close to $e$) by using \eqref{BSEJA}, and it agrees with the right handside. In a neigborhood of the path,  the two sides of \eqref{BSHJA} are holomorphic functions. Hence they coincide everywhere along the path, in particular at $z_0$. This proves the result, for the chosen determination of $\log h(z,y)$. But another determination differ from the one chosen by $2ki\pi$ for some integer $k$, hence changes the determination of $h(z,y)^s$  by a factor $e^{2ki\pi s}$. As
 $e^{2ki\pi(s-1)}= e^{2ki\pi s}$, the change is the same for both sides of the identity. Hence, the result obtained so far is true for \emph{any} determination of $\log h(z,y)$. To fully state \eqref{BSHJA}, it remains to get rid of the condition $w=y\in \Omega$. Now observe that both sides are antiholomorphic functions of $w$, when properly defined. Details (which are similar to the arguments we gave for the holomorphic extension in the $z$ variable) are left to the reader.
\end{proof}

The identity \eqref{BSHJA} is a \emph{Bernstein-Sato} identity for the complex fundamental kernel of $\mathbb J$, that is to say for a PHJTS of tube-type. 
 
 \section{Bernstein-Sato identity for the complex fundamental kernel of a PHJTS}

We now examine the case of a general simple PHJTS $\mathbb V$. 

Let $c$ be a maximal tripotent of $\mathbb V$. Let $\mathbb V=\mathbb V_2\oplus \mathbb V_1$ be the corresponding Peirce decomposition of $\mathbb V$. The space $\mathbb V_2$ is a sub-PHJTS which is of tube-type. It has a natural structure of complex Jordan algebra by setting
\[x._cy = \{x,c,y\}
\]
The map $Q(c)$ preserves $\mathbb V_2$ and induces a Cartan involution of $\mathbb V_2$. More precisely, let
\[J=\{ x\in \mathbb V_2, Q(c)x=x\}\ .
\]
Then $J$ is a real form of $\mathbb V_2$ and can be shown to be a Euclidean Jordan algebra.

\begin{proposition}Let $\mathbb V$ be a simple PHJTS. Let $c$ be a maximal tripotent and let $\mathbb V=\mathbb V_2\oplus \mathbb V_1$ be the corresponding Peirce decomposition. Then $\mathbb V_2$ is a simple PHJTS  of tube-type.
\end{proposition}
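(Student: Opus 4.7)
The proposition has three parts, of which the first two follow immediately from the Peirce machinery of Section~2. The product rule $\{V_i,V_j,V_k\}\subset V_{i-j+k}$ applied to the single tripotent $c$ gives $\{\mathbb V_2,\mathbb V_2,\mathbb V_2\}\subset\mathbb V_2$, and the positive-definite Hermitian trace form of $\mathbb V$ restricts to a positive-definite one on $\mathbb V_2$. The tripotent $c$ is maximal in $\mathbb V_2$ with $\mathbb V_2=\mathbb V_2(c,1)$, since $L(c,c)=\id$ on $V(c,1)=\mathbb V_2$, while $\mathbb V_2\cap V(c,\tfrac12)=\mathbb V_2\cap\mathbb V_1=0$ and $\mathbb V_2\cap V(c,0)=0$ (the latter by maximality of $c$ in $\mathbb V$); hence $\mathbb V_2$ is a tube-type sub-PHJTS.

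For the simplicity I argue by contradiction. Suppose $\mathbb V_2=W\oplus W'$ is a non-trivial direct sum of PHJTS ideals (existence via the decomposition theorem for PJTS quoted in Section~2). Tube type gives $\mathbb V_2$ the structure of a unital complex Jordan algebra with product $x._c y=\{x,c,y\}$ and unit $c$; each PHJTS ideal is simultaneously a Jordan-algebra ideal, so $c=e+e'$ with $e\in W$, $e'\in W'$ the respective Jordan units. Then $e$ and $e'$ are orthogonal PJTS tripotents of $\mathbb V$ summing to the maximal tripotent $c$. In the joint Peirce decomposition of $\mathbb V$ with respect to $(e,e')$, maximality of $c$ yields $V_{00}=0$, and the Jordan-algebra ideal split forces the Jordan $\tfrac12$-Peirce component $V_{12}$ to vanish, so $\mathbb V=V_{11}\oplus V_{22}\oplus V_{10}\oplus V_{20}$ with $W=V_{11}$ and $W'=V_{22}$.

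I would then derive a contradiction by producing additional structure on the Koecher--Kantor--Tits Lie algebra $\mathfrak g=\mathbb V^-\oplus\mathfrak{str}(\mathbb V)\oplus\mathbb V^+$ attached to $\mathbb V$. The operator $T\in\End(\mathbb V)$ acting as $+\id$ on $V_{11}\oplus V_{10}$ and as $-\id$ on $V_{22}\oplus V_{20}$ belongs to $\mathfrak{str}(\mathbb V)$: since the joint Peirce decomposition is orthogonal for the Hermitian trace form one has $T^t=T$, and a case-by-case verification over the non-vanishing triples (controlled by the chain rule $\{V_{ij},V_{jk},V_{kl}\}\subset V_{il}$) shows the defining identity $T\{x,y,z\}=\{Tx,y,z\}-\{x,T^ty,z\}+\{x,y,Tz\}$ is satisfied. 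Via $\ad T$, this refines the standard $3$-grading of $\mathfrak g$ into a $5$-grading (with $\mathfrak{str}(\mathbb V)$ splitting into $\ad T$-eigenvalues $-2,0,+2$ and $\mathbb V^\pm$ into eigenvalues $\pm1$); the interplay of the two gradings yields a proper non-trivial graded ideal of $\mathfrak g$, contradicting the simplicity of $\mathfrak g$, which follows from the simplicity of $\mathbb V$ via the KKT correspondence of Section~2.

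The main obstacle is making this last Lie-algebraic step rigorous. A naive attempt to extract an ideal of $\mathbb V$ directly as $\widetilde W:=V_{11}\oplus V_{10}$ (the $+$-eigenspace of $T$) \emph{fails}, since the chain rule produces e.g.\ $\{V_{10},V_{10},V_{20}\}\subset V_{20}$, landing outside $\widetilde W$; one really has to pass to $\mathfrak g$ to exploit the refined grading. As a more explicit alternative, the simplicity of $\mathbb V_2$ can be verified case-by-case from Table~1: for every simple PHJTS $\mathbb V$ the Peirce-$2$ space is visibly a simple tube-type PHJTS from the list (e.g.\ $\mathbb C^{r\times r}$ for $\mathbb V=\mathbb C^{r\times s}$, $Asym(2r,\mathbb C)$ for $\mathbb V=Asym(2r+1,\mathbb C)$, $\Herm(2,\mathbb O_\mathbb C)$ for $\mathbb V=\mathbb O_\mathbb C^{1\times 2}$).
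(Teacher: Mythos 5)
The reduction in your second paragraph is sound and runs parallel to the paper's: you split $c=e+e'$ into orthogonal tripotents lying in the two putative ideals and arrive at the joint Peirce configuration $\mathbb V=V_{11}\oplus V_{22}\oplus V_{10}\oplus V_{20}$ with $V_{12}=0$. But the argument then stalls exactly where the contradiction has to be produced. The Lie-algebraic step is only a sketch, and its pivotal claim --- that the $5$-grading of $\mathfrak g$ induced by $\ad T$ ``yields a proper non-trivial graded ideal of $\mathfrak g$'' --- is unsupported and, as stated, not a valid inference: simple Lie algebras carry many refined gradings (every parabolic gives one) without acquiring ideals, so some further input is needed and none is given. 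You concede this yourself (``the main obstacle is making this last Lie-algebraic step rigorous''), and the fallback via Table~1 is a classification-dependent check that you also do not carry out. So as it stands the proof of simplicity is incomplete.

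The missing idea is elementary and intrinsic, and it is precisely the statement ``$V_{12}\neq 0$'' that your own reduction isolates. Refine $e\in W$ and $e'\in W'$ into primitive tripotents $d\in W$ and $f\in W'$ (a primitive tripotent of an ideal of $\mathbb V_2$ is primitive in $\mathbb V_2$, hence in $\mathbb V$, since $\mathbb V_2=\mathbb V(c,1)$). One may assume $r\geq 2$, the rank-one case being trivial. For a \emph{simple} PHJTS of rank $\geq 2$, the constancy and non-vanishing of the characteristic number recorded in Section~2 give $\dim_{\mathbb C}\bigl(\mathbb V(d,\tfrac12)\cap\mathbb V(f,\tfrac12)\bigr)=a_{\mathbb C}\neq 0$; on the other hand this joint Peirce space lies in $\mathbb V_2$ and, since $v=2L(d,d)v$ for $v\in\mathbb V(d,\tfrac12)$ and likewise for $f$, it is contained in $W\cap W'=\{0\}$ --- a contradiction. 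This is the paper's argument; no passage to the Koecher--Kantor--Tits algebra is needed. I would recommend replacing your third paragraph (and the Table~1 alternative) by this two-line conclusion.
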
 

\begin{proof} The only point to be checked is the simplicity of $\mathbb V_2$. If $\mathbb V$ is of rank $1$, there is nothing to prove. So we may assume that $\mathbb V$ has rank $\geq 2$. Suppose $\mathbb V_2$ would split as $\mathbb V_2 = Y\oplus Z$, a sum of two orthogonal ideals of $\mathbb V_2$. Let decompose $c=y+z$ with $y\in Y$ and 
$z\in Z$. From $\{y+z,y+z,y+z\} = y+z$ and the fact that both $Y$ and $Z$ are ideals of $\mathbb V_2$, one deduces that $y$ and $z$ are orthogonal tripotents. If $y=0$, then $c\in Z$, and hence $L(c,c)t=0$ for any $t$ in $Y$, yielding a contradiction. Hence both $y$ and $z$ are different from $0$. Let further decompose $y$ (resp. $z$) as a sum of primitive tripotents of $Y$ (resp. of $Z$). A primitive tripotent of $Y$ is a primitive tripotent of $\mathbb V_2$, and so we would have found two primitive tripotents $d$ in $Y$ and $f$ in $ Z$, such that the space
$\mathbb V(d,\frac{1}{2})\cap \mathbb V(f,\frac{1}{2}) \subset Y\cap Z= \{ 0\}$. But $d$ and $f$ are orthogonal primitive tripotents in $V$, and hence $\dim \big(\mathbb V(d,1)\cap \mathbb V(f,1)\big) = a_\mathbb C\neq 0$ as the rank of $\mathbb V_2$ is at least $2$, thus yielding a contradiction. 
\end{proof}

Let $m(T,x,y)$ be the generic minimal polynomial of $\mathbb V$, and let $m^{(2)}(T,x,y)$ be the generic minimal polynomial of $\mathbb V_2$. 

\begin{proposition}\label{V22}
For $x,y\in \mathbb V_2$,
\[ m(T,x,y) = m^{(2)}(T,x,y)\ .\]
\end{proposition}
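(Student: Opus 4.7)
The plan is to exploit the Peirce relations together with the defining universal property of the generic minimal polynomial. First I would note that the multiplication rule $\{V_i,V_j,V_k\}\subset V_{i-j+k}$ taken with $i=j=k=2$ gives $\{\mathbb V_2,\mathbb V_2,\mathbb V_2\}\subset \mathbb V_2$, so $\mathbb V_2$ is stable under the triple product. Consequently, for $x,y\in \mathbb V_2$, every iterated Jordan power $x^{(k,y)}$ formed in $\mathbb V^{(y)}$ already lies in $\mathbb V_2$ and coincides with the one formed in $\mathbb V_2^{(y)}$.

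Next I would match the ranks: choose a Jordan frame $(c_1,\dots,c_r)$ of $\mathbb V$ whose sum is the given maximal tripotent $c$. The $c_j$ are mutually orthogonal primitive tripotents of $\mathbb V$ lying in $\mathbb V_2$, primitive in $\mathbb V_2$ as well, and summing to $c$, which is the unit of the Jordan algebra structure on $\mathbb V_2$ and therefore maximal there. Hence $(c_1,\dots,c_r)$ is also a Jordan frame of $\mathbb V_2$, and $\mathbb V_2$ has rank $r$.

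The core step is a divisibility argument on a Zariski open set. If $(x,y)\in \mathbb V_2\times \mathbb V_2$ is regular in $\mathbb V_2$, then $x, x^{(2,y)},\dots, x^{(r,y)}$ are linearly independent in $\mathbb V_2$, hence a fortiori linearly independent in $\mathbb V$; since $\mathbb V$ also has rank $r$, the pair is regular in $\mathbb V$ as well, and both $m(T,x,y)$ and $m^{(2)}(T,x,y)$ are monic of degree $r$. The identity $T\,m(T,x,y)\cdot x=0$ holds in $\mathbb V^{(y)}$, but the whole computation takes place inside $\mathbb V_2$, so the same identity holds in $\mathbb V_2^{(y)}$. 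By the minimality property defining $m^{(2)}$, this forces $m^{(2)}(T,x,y)\mid m(T,x,y)$, and equality of degrees then gives $m^{(2)}(T,x,y)=m(T,x,y)$ on this Zariski open set.

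Finally, both sides are polynomial in $(x,y)$ (holomorphic in $x$, antiholomorphic in $y$), and the set of pairs regular in $\mathbb V_2$ is Zariski dense in $\mathbb V_2\times \mathbb V_2$, so the identity extends to all of $\mathbb V_2\times \mathbb V_2$. The only mildly delicate point is the propagation of regularity from $\mathbb V_2$ to $\mathbb V$, which rests entirely on the equality of ranks established in the second step; everything else is a direct consequence of the Peirce calculus and the characterization \eqref{mj} of $m_j$.
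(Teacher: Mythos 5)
Your proof is correct and follows essentially the same route as the paper's: establish that $\mathbb V$ and $\mathbb V_2$ have the same rank, deduce that a pair regular in $\mathbb V_2$ is regular in $\mathbb V$ so the two generic minimal polynomials coincide there, and conclude by density. You merely spell out the details (Peirce stability, frame transfer, the divisibility-plus-degree comparison) that the paper leaves implicit.
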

\begin{proof} The two PHJTS $\mathbb V$ and $\mathbb V_2$ have the same rank, and hence a regular pair $(x,y)$ in $\mathbb  V_2$ is a regular pair in $\mathbb V$. So, the minimal polynomial $m^{(2)}(T,x,y)$ in $\mathbb V_2$ is equal to the minimal polynomial $m(T,x,y)$ in $\mathbb V$. By density, the result can be extended to any pair in $\mathbb V_2$. 
\end{proof}

Let $h$ be the complex fundamental kernel of $\mathbb V$, and $h^{(2)}$ the complex fundamental kernel of $\mathbb V_2$. Similarly, for $1\leq k\leq r$, let $m_k$ and $m_k^{(2)}$ be the fundamental invariant polynomials of $\mathbb V$ and of $\mathbb V_2$.

\begin{proposition}\label{restrick}
 Let $y_2\in \mathbb V_2$, and let $x=x_2+x_1$ be in $\mathbb V$. Then the following identities holds :
\medskip

$i)$ $h(x,y_2)= h(x_2,y_2)=h^{(2)}(x_2,y_2)$
\medskip

$ii)$ $m_k(x,y_2)= m_k(x_2,y_2)= m_k^{(2)} (x_2,y_2),\quad 1\leq k\leq r$
\end{proposition}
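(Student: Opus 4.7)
The plan is to deduce everything from two ingredients already at hand: the kernel identity $c(x_2+x_1, y_2) = c(x_2, y_2)$ from Proposition~\ref{cV2}, and the coincidence of generic minimal polynomials from Proposition~\ref{V22}. The bridge between the two is the factorization $c(x,y) = h(x,y)^g$ supplied by Proposition~\ref{powerg}.

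For the first equalities, combining \eqref{V2} with Proposition~\ref{powerg} yields
\[ h(x_2+x_1, y_2)^g = c(x_2+x_1, y_2) = c(x_2, y_2) = h(x_2, y_2)^g. \]
With $x_2$ and $y_2$ fixed, both sides are holomorphic polynomials in $x_1 \in \mathbb{V}_1$; their ratio is therefore a holomorphic $g$-th root of unity on the connected space $\mathbb{V}_1$, hence constant, and evaluation at $x_1 = 0$ pins that constant to $1$. This gives $h(x, y_2) = h(x_2, y_2)$. To descend this to each $m_k$, recall from \eqref{mh} that $h(x,y) = \sum_{k=0}^r m_k(x,y)$, where $m_k(x,\cdot)$ is antiholomorphic homogeneous of degree $k$ in the second argument. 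Substituting $\lambda y_2$ in place of $y_2$ and matching coefficients of $\overline{\lambda}^{\,k}$ in
\[ \sum_{k=0}^r \overline{\lambda}^{\,k} m_k(x, y_2) = \sum_{k=0}^r \overline{\lambda}^{\,k} m_k(x_2, y_2) \]
produces $m_k(x, y_2) = m_k(x_2, y_2)$ for every $k$.

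The second equalities $h(x_2, y_2) = h^{(2)}(x_2, y_2)$ and $m_k(x_2, y_2) = m_k^{(2)}(x_2, y_2)$ then follow at once from Proposition~\ref{V22}: the coincidence of generic minimal polynomials on $\mathbb{V}_2 \times \mathbb{V}_2$ forces the equality of their coefficients $m_k$, and summing gives the identity for $h$. The only mildly delicate step is the $g$-th root extraction passing from $h^g = h^g$ to $h = h$, but the polynomial-valued root-of-unity ambiguity is eliminated by evaluating at $x_1 = 0$, so this is a formality rather than a genuine obstacle.
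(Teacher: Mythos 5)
Your argument is correct and follows essentially the same route as the paper: the first equality in $i)$ comes from combining $c(x_2+x_1,y_2)=c(x_2,y_2)$ (Proposition \ref{cV2}) with $c=h^g$ (Proposition \ref{powerg}), and the second from Proposition \ref{V22} together with \eqref{mh}; you simply make explicit the $g$-th-root extraction and the homogeneity-in-$y$ argument that descends from $h$ to the $m_k$, both of which the paper leaves implicit. The only half-line worth adding is the degenerate case $h(x_2,y_2)=0$, where the ratio you invoke is undefined but $h(x_2+x_1,y_2)^g\equiv 0$ forces $h(x_2+x_1,y_2)\equiv 0$ directly.
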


\begin{proof} 
For $i)$, the first equality is a consequence of \eqref{cV2}, combined with \eqref{powerg}.  The second equality follows from Proposition \ref{V22} and \eqref{mh}.
\end{proof}

As $h(y,x) = \overline {h(x,y)}$ for all $x,y\in \mathbb V$, symmetric results (i.e. exchanging the role of $x$ and $y$) are also true.

For $0\leq k\leq r$, let $M_k$ be the holomorphic differential operator with polynomial coefficients on $V$ associated to $m_k$, and similarly, let $ M^{(2)}_k$ be the differential operator on $V_2$ associated to the symbols $m_k^{(2)}$. 

\begin{proposition}\label{restrick2}
 Then, for any function $f$ in $\mathcal C^\infty(\mathbb V)$ and $x_2\in \mathbb V_2$,
\[  M_k f (x_2) =  M_k^{(2)}\big( f_{\vert \mathbb V_2}\big)(x_2)\ .\]
\end{proposition}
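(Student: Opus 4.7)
The plan is to reduce the operator identity to a symbol-level identity for which Proposition \ref{restrick} supplies the crucial input. Using the Peirce decomposition $\mathbb V=\mathbb V_2\oplus \mathbb V_1$, introduce linear coordinates $z=(z_2,z_1)$ on $\mathbb V$ and the corresponding dual coordinates $\zeta=(\zeta_2,\zeta_1)$, and likewise on $\mathbb V_2$. The holomorphic differential operator $M_k$ is characterized by its symbol $\sigma_{M_k}(z,\zeta)$, built from the polynomial $m_k$ via the $\zeta\leftrightarrow \partial/\partial z$ correspondence; $M_k^{(2)}$ is built from $m_k^{(2)}$ on $\mathbb V_2$ in the same way.

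The first step is to establish the symbol identity
\[\sigma_{M_k}(x_2,\zeta_2+\zeta_1)=\sigma_{M_k^{(2)}}(x_2,\zeta_2)\qquad (x_2\in \mathbb V_2,\ \zeta_2\in \mathbb V_2,\ \zeta_1\in \mathbb V_1).\]
This is precisely the content of Proposition \ref{restrick} translated to the symbol polynomial: when the ``base-point'' argument of $m_k$ lies in $\mathbb V_2$, the value is insensitive to the $\mathbb V_1$-component of the other argument and equals $m_k^{(2)}$ on the $\mathbb V_2$-parts.

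The second step is to pass back from the symbol identity to the operator identity. Writing $M_k=\sum_{\alpha,\beta} c_{\alpha,\beta}(z)(\partial/\partial z_2)^\alpha (\partial/\partial z_1)^\beta$ in the Peirce coordinates, the independence of $\sigma_{M_k}(x_2,\zeta)$ with respect to $\zeta_1$ means that all coefficients $c_{\alpha,\beta}(z)$ with $\beta\neq 0$ vanish at $(x_2,0)$. Hence for any $f\in \mathcal C^\infty(\mathbb V)$, only the $\partial/\partial z_2$-jet of $f$ at $(x_2,0)$, i.e.\ the jet of $f_{\vert \mathbb V_2}$ at $x_2$, enters in $M_k f(x_2)$, and the resulting expression coincides symbol-by-symbol with $M_k^{(2)}(f_{\vert \mathbb V_2})(x_2)$.

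The main obstacle is the implicit nature of the correspondence $m_k\mapsto M_k$ for a general (non tube-type) PHJTS: in the tube-type case it is made explicit via Proposition \ref{PHJTSJA} and the $H$-invariance characterization $\sigma_{M_k}(e,\xi)=a_k(\xi)$, whereas on a general $\mathbb V$ the paper only speaks of ``the operator associated to $m_k$''. To make the argument water-tight one should either define $M_k$ directly by prescribing its symbol as the bi-holomorphic incarnation of $m_k$, or verify by covariance under $Str(\mathbb V)$ that whatever intrinsic definition is in force yields that symbol. Once the symbol calculus is fixed, the restriction identity falls out immediately from Proposition \ref{restrick}.
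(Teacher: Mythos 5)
Your argument is correct and is essentially the paper's own proof: the paper likewise observes that the symbol $m_k(x_2,\xi)$ at a point $x_2\in\mathbb V_2$ depends only on $\xi_2$ and equals $m_k^{(2)}(x_2,\xi_2)$ (the symmetric form of Proposition \ref{restrick}), so that $M_k$ at $x_2$ differentiates only in the $\mathbb V_2$ directions and reproduces $M_k^{(2)}$ on the restriction. Your closing caveat about the definition of $M_k$ is fair but harmless, since the paper does define $M_k$ precisely by prescribing its symbol to be $m_k$, which is the convention your symbol-level computation requires.
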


\begin{proof} Let $\xi= \xi_2+\xi_1$ be in $\mathbb V_2$. The symbol $m_k(x_2,\xi)$ at $x_2$ does depend only on $\xi_2$, and is equal to $m_k^{(2)}(x_2,\xi_2)$. Hence the action of the differential operator $M_k$ at $x_2$ use only differentiations in the directions of $\mathbb V_2$. The statement follows. See similar results in \cite{fkklr}, part III, section VI.4.4.
\end{proof}

With \eqref{EsMk} in mind, set the following definition :

\begin{equation}
\mathbb E_s =  \sum_{k=0}^r (-1)^{r-k} \big(\prod_{j=1}^k(s+\frac{d}{2}(r-j)\big) M_{r-k}\ .
\end{equation}

\begin{theorem} Let $z,w$ be in $\mathbb V\times \mathbb V$, and assume that $h(z,w)\neq 0$. The following identity holds :

\begin{equation}
\mathbb E_s(z,\frac{ \partial}{\partial z})  h(z,w)^s = b(s) h(z,w)^{s-1}\ ,
\end{equation}
where $h(z,w)^s$ and $h(z,w)^{s-1}$ are computed from the same local determination of $\log h(z,w)$.
\end{theorem}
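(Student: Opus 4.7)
The plan is to reduce to the tube-type Bernstein--Sato identity \eqref{BSHJA} proved in the previous section. Fix a maximal tripotent $c$ of $\mathbb V$ and the Peirce decomposition $\mathbb V = \mathbb V_2 \oplus \mathbb V_1$. By the proposition just established, $\mathbb V_2$ is a simple PHJTS of tube type; it has the same rank $r$ and the same characteristic number $a_\mathbb C = d$ as $\mathbb V$, so the operator $\mathbb E_s^{(2)}$ built for $\mathbb V_2$ via the same formula has the same polynomial coefficients in $s$ multiplying the $M_k^{(2)}$ as $\mathbb E_s$ has multiplying the $M_k$.

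The first step is to prove the identity on $\mathbb V_2 \times \mathbb V_2$. For $z, w \in \mathbb V_2$ with $h(z,w) \neq 0$, Proposition \ref{restrick}(i) gives $h(z,w) = h^{(2)}(z,w)$, and applying Proposition \ref{restrick2} to $f = h(\cdot, w)^s$ (locally holomorphic on $\mathbb V$ for a fixed branch of $\log h$) yields $M_k f(z) = M_k^{(2)}(f|_{\mathbb V_2})(z)$, hence $\mathbb E_s h(z,w)^s = \mathbb E_s^{(2)} h^{(2)}(z,w)^s$ at $z$. Invoking \eqref{BSHJA} for the tube-type PHJTS $\mathbb V_2$ closes this step.

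To extend to arbitrary $(z,w) \in \{h \neq 0\}$, I would use $Str(\mathbb V)_0$-covariance. The $Str$-invariance $m(T, gz, \sigma(g)w) = m(T, z, w)$ of the generic minimal polynomial gives $h(gz, \sigma(g)w) = h(z,w)$, and the operator $\mathbb E_s$ commutes with the pullback action of $Str(\mathbb V)_0$ since each $M_k$ is built from the $Str$-invariant symbol $m_k$ in a $Str$-equivariant fashion. Therefore $F(z,w) := \mathbb E_s h(z,w)^s - b(s) h(z,w)^{s-1}$ is invariant under $(z,w) \mapsto (gz, \sigma(g) w)$. The $Str(\mathbb V)_0$-orbit of $\mathbb V_2 \times \mathbb V_2$ under this action contains an open dense subset of $\mathbb V \times \mathbb V$ (by a dimension count using Lemma \ref{orb}: the orbit of a maximal tripotent is open dense in $\mathbb V$, and $Str(\mathbb V)_0$ has enough room to move $\mathbb V_2$ across $\mathbb V$ in each coordinate). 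Combined with the first step, $F$ vanishes on an open subset of $\{h \neq 0\}$, hence on all of $\{h \neq 0\}$ by the identity theorem for functions holomorphic in $z$ and antiholomorphic in $w$; the branches of $\log h$ are handled exactly as in the tube-type proof.

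The main obstacle will be verifying the $Str(\mathbb V)_0$-invariance of $\mathbb E_s$: the symbol-to-operator correspondence defining $M_k$ from $m_k$ must be shown to be $Str$-equivariant in the general, non tube-type setting. This is a technical check of the symbol calculus in the spirit of the tube-type argument, but requires care because $\mathbb V$ no longer carries a Jordan algebra structure. A secondary technical point is the orbit density statement in the third paragraph.
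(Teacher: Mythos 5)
Your first step --- establishing the identity on $\mathbb V_2\times\mathbb V_2$ via Propositions \ref{restrick} and \ref{restrick2} and the tube-type identity \eqref{BSHJA} --- is exactly the paper's reduction. But your extension to general $(z,w)$ has a genuine gap, and it is a gap the paper's argument is designed to avoid. You fix one maximal tripotent $c$ and then need the set $\{(gz_2,\sigma(g)w_2): g\in Str(\mathbb V)_0,\ z_2,w_2\in\mathbb V_2\}$ to be dense in $\mathbb V\times\mathbb V$. A dimension count does not establish this: the two factors are not moved independently (the action on the second variable is $\sigma(g)=(g^*)^{-1}$, rigidly linked to $g$), and since $g\,\mathbb V_2(c)=\operatorname{Im}Q(gc)=\mathbb V_2(c')$ is always a Peirce $2$-space, what you are really claiming is that a generic pair $(z,w)$ can be simultaneously conjugated into a common tube-type sub-triple --- a nontrivial statement about the Jordan pair $(\mathbb V,\mathbb V)$ that you assert but do not prove. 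The paper sidesteps this entirely by letting $c$ depend on $w$: by the spectral decomposition, \emph{every} $w\in\mathbb V$ lies in $\mathbb V_2(c)$ for a suitable maximal tripotent $c$, so one may assume $w=w_2$ outright; then $h(z,w_2)=h(z_2,w_2)$ depends only on $z_2$, and the symbol computation behind Proposition \ref{restrick2} shows that $\mathbb E_s$ applied to such a function is computed entirely inside $\mathbb V_2$. No second conjugation, no orbit density, and no identity-theorem patching of branches is needed.

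On the point you flag as the ``main obstacle'': the $Str(\mathbb V)_0$-invariance of $\mathbb E_s$ is in fact immediate and is not where the difficulty lies. The operators $M_k$ are defined by their full symbols $m_k$, the symbols satisfy $m_k(gx,\sigma(g)\xi)=m_k(x,\xi)$ by the invariance of the generic minimal polynomial, and for \emph{linear} changes of variables the full symbol transforms exactly (no lower-order corrections), so invariance of the symbol is equivalent to invariance of the operator --- the paper states this equivalence explicitly at the start of Section 10 and it does not use the Jordan algebra structure. So your covariance step is fine; the unproved orbit density is the real hole, and the fix is to adopt the paper's device of adapting the maximal tripotent to the second variable.
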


\begin{proof} Let $c$ be a maximal tripotent of $\mathbb V$, and let $\mathbb V=\mathbb V_2\oplus \mathbb V_1$ be the associated decomposition of $\mathbb V$. Without loosing generality, we may assume that  $w=w_2\in \mathbb V_2$. The function $h(z,w_2)$ depends only on the component $z_2$ of $z$ and the restriction $h_{\vert \mathbb V_2\times \mathbb V_2}$ coincide with the complex fondamental kernel $h^{(2)}$ of $\mathbb V_2$. Now $\mathbb E_s$ is a  linear combination of the differential operators $M_k$. By Proposition \ref{restrick2}, we are reduced to a computation inside $\mathbb V_2$. Now use the Bernstein-Sato identity for $\mathbb V_2$ (equation \eqref{BSHJA}) to conclude.
\end{proof}

\section{Bernstein-Sato identities on PJTS  }

\subsection{PJTS with a complex structure}

Let $V$ be a simple PJTS admitting a complex structure, i.e. a $PHJTS$ but viewed as a real PJTS. Let $h(x,y)$ be the complex fundamental kernel of the corresponding PHJTS. Then, as a \emph{real} PJTS, its fundamental kernel is given by
\[k(x,y) = \vert h(x,y)\vert^2\ .\]
It is a polynomial function  which is nonnegative on $V\times V$.

Recall that for $g\in \mathbb G$, and $x$ where $g$ is defined, then $j(g,x)$ is the  Jacobian of the holomorphic map $x\longmapsto g(x)$.  Let  $j_\mathbb R (g,x) = \vert j(g,x)\vert^2$ be its Jacobian on $\mathbb X$ viewed as a real manifold.

 Let $D = \sum_{\alpha} a_\alpha(z) \frac{\partial^{\vert \alpha\vert}}{\partial z^\alpha}$ be a holomorphic differential operator on $V$. Denote by $\overline D$ the (antiholomorphic) differential operator given by
 \[ \overline D = \sum_{j=1} ^k \overline {a_\alpha}(z) \frac{\partial^{\vert \alpha\vert}}{\partial \overline{z}^\alpha}\ .
 \]
 Then, for any smooth function on $V$,  $\overline {D}\,(\overline{f}) = \overline {Df}$.
 
 Let $\mathbb E_s$ be the holomorphic differential operator defined on $V$ (viewed as a PHJTS) by \eqref{EsMk} and let $b(s)=b_{r,a_\mathbb C}(s)$ be defined by \eqref{b}.
 
 \begin{proposition} Let  $x,y\in V$ and assume that $k(x,y)\neq 0$. Then
 \[ \mathbb E_s\, \overline {\mathbb E_s}\, k(x,y)^s = b(s)^2 k(x,y)^{s-1}\ .\]

\end{proposition}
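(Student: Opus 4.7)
The plan is to exploit the fact that the kernel $k(x,y)$ factors as $h(x,y)\,\overline{h(x,y)}$, combined with the Bernstein--Sato identity for $h$ already established in the previous section, together with the holomorphic/antiholomorphic splitting of $\mathbb{E}_s$ and $\overline{\mathbb{E}_s}$. Since $k(x,y)\neq 0$ implies $h(x,y)\neq 0$ on a neighborhood of $(x,y)$, I can fix a local determination of $\log h(z,w)$ near $(x,y)$, which yields $h^s=\exp(s\log h)$ and, coherently, $\bar h^s=\exp(s\,\overline{\log h})=\overline{h^{\bar s}}$; the positive real factorisation $k^s=h^s\bar h^s$ then holds in this common domain.

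The first key observation is that $\mathbb{E}_s$ is built from the holomorphic operators $M_k$, so its coefficients are holomorphic polynomials in $z$ and it only involves $\partial/\partial z$; dually, $\overline{\mathbb{E}_s}$ has antiholomorphic coefficients and involves only $\partial/\partial \bar z$. Consequently $\mathbb{E}_s$ and $\overline{\mathbb{E}_s}$ commute, $\mathbb{E}_s$ annihilates any antiholomorphic factor, and $\overline{\mathbb{E}_s}$ annihilates any holomorphic factor. In particular:
\[
\overline{\mathbb{E}_s}\bigl(h^s\bar h^s\bigr)=h^s\,\overline{\mathbb{E}_s}(\bar h^s),\qquad
\mathbb{E}_s\bigl(h^{s-1}\bar h^{s-1}\cdot h\bigr) \ \text{respects the same rule.}
\]

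The second step is to transfer the Bernstein--Sato identity of Section 11, $\mathbb{E}_s\,h(z,w)^s=b(s)\,h(z,w)^{s-1}$, to its conjugate counterpart. Using the rule $\overline{D}(\bar f)=\overline{D f}$ recalled at the start of the subsection, applied to both sides of the identity, I obtain $\overline{\mathbb{E}_s}(\bar h^{s})=b(s)\,\bar h^{s-1}$. For real $s$ this is immediate because $b(s)$ is a real polynomial; for complex $s$ both sides are entire in $s$ on the fixed connected component where $k(x,y)\neq 0$, so the identity extends by analytic continuation in the parameter.

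The proof is then assembled by composing:
\[
\mathbb{E}_s\,\overline{\mathbb{E}_s}\,k(x,y)^s
=\mathbb{E}_s\bigl(h^s\,\overline{\mathbb{E}_s}(\bar h^s)\bigr)
=\mathbb{E}_s\bigl(b(s)\,h^s\,\bar h^{s-1}\bigr)
=b(s)\,\bar h^{s-1}\,\mathbb{E}_s(h^s)
=b(s)^2\,h^{s-1}\bar h^{s-1}
=b(s)^2\,k(x,y)^{s-1}.
\]
The main obstacle, which is minor, is the bookkeeping with branches of $h^s$ and $\bar h^s$: I must use the same local determination of $\log h$ throughout so that $k^s=h^s\bar h^s$ holds unambiguously on the neighborhood, and so that the factorisations exploited in the last display respect the chosen branches; apart from this, the computation is a two-step application of the holomorphic Bernstein--Sato identity and its conjugate.
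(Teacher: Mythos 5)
Your argument is correct and follows essentially the same route as the paper: write $k^s=h^s\,\overline{h^{\bar s}}$ for a fixed local branch of $\log h$, let $\mathbb E_s$ act on the holomorphic factor and $\overline{\mathbb E_s}$ on the antiholomorphic one, and apply the Bernstein--Sato identity of Section 11 together with its conjugate (the paper writes the resulting constant as $b(s)\,\overline{b(\bar s)}$, which equals your $b(s)^2$ since $b$ has real coefficients). Your extra care about the splitting of the two operators and about deriving the conjugate identity by conjugation for real $s$ plus analytic continuation in $s$ is a harmless elaboration of the same proof.
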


\begin{proof} Near a point $(x,y)$ such that $h(x,y)\neq 0$, choose a local determination of $\log h(x,y)$, and consider the corresponding local determinations for the complex  powers of $h$, such as $h(x,y)^s$.  Now, $\overline{ \big(h(x,y)^{\overline s}\big)}$ is a determination of $\big( \overline {h(x,y)}\big)^s$ such that 
 \[k(x,y)^s : = e^{s\log k(x,y)} = h(x,y)^s\,\overline{ h(x,y)^{\overline s}}\ .\] 
Hence 
\[\big(E_s \,\overline {E_s}\,\big)\, k(x,y)^s = E_s h(x,y)^s\,\, \overline {E_s}\,\overline {h(x,y)^{\overline s}}
\]
\[ = b(s)\,\overline{b(\overline s)} \,h(x,y)^{s-1} \overline {h(x,y)^{\overline s -1}}
= b(s)^2\, k(x,y)^{s-1}\ .\]
 
\end{proof}

\subsection{Reduced PJTS}

Let $V$ be a simple reduced PJTS, and let $\mathbb V$ its Hermitification, which is a simple PHJTS. Let $h$ be the complex fundamental kernel of $\mathbb V$. Recall that the fundamental kernel $k$ of $V$ is given by
\[ k(x,y) = h(x,y)^2\ .\]
Then $k$ is a polynomial function which is nonegative on $V\times V$.

With again \eqref{EsMk} in mind, let $E_s$ be the differential operator on $V$ defined by
\[ E_s =  \sum_{k=0}^r (-1)^{r-k} \big(\prod_{j=1}^k(s+\frac{d}{2}(r-j)\big) M_{r-k}\ .
\]
Then the complexified holomorphic differential operator on $\mathbb V$ coincides with the holomorphic differential operator denoted by $\mathbb E_s$.

Let $t\in \mathbb R, t\neq 0$. For $t>0$, we choose the usual determination of $\log t$ and the corresponding determination of $t^s$. When $t<0$, we use $\log t = \log (-t) +i\pi$, so that the corresponding determinaton of  $t^s$ is $t^s = (-t)^s e^{i\pi s}$. With these notations, observe that
$t^{2s}={(t^2)}^s$ for $t>0$, and $t^{2s} = e^{2i\pi s}(t^2)^s$ when $t<0$.

\begin{proposition} Let $k(x,y)\neq 0$. Then

\[\big( E_{2s-1} \circ E_{2s}\big)k(x,y)^s = b(2s)b(2s-1) k(x,y)^{s-1}\ .\]
\end{proposition}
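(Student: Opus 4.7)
The plan is to derive the claimed Bernstein--Sato identity for $k$ from the one established at the end of Section 11 for the complex fundamental kernel $h$ of $\mathbb V$, by applying that identity twice, once at parameter $t=2s$ and once at $t=2s-1$, and then exploiting the relation $k=h^2$.

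To pass from the complex identity to one on $V\times V$, I would use that, since $V$ is reduced, $h(x,y)\in\mathbb R$ for $x,y\in V$ (Proposition \ref{minC}), and the operator $E_s$ on $V$ has complexification $\mathbb E_s$. For fixed $y\in V$, the function $z\mapsto h(z,y)^t$ is holomorphic in a neighborhood of any $x\in V$ with $h(x,y)\neq 0$, so the restriction principle \eqref{complexD} combined with the complex Bernstein--Sato identity \eqref{BSHJA} yields
\[
E_t\, h(x,y)^t = b(t)\, h(x,y)^{t-1}
\]
for any such $(x,y)$. Applying this successively with $t=2s$ and then with $t=2s-1$, and composing, gives
\[
(E_{2s-1}\circ E_{2s})\, h(x,y)^{2s} = b(2s)\,b(2s-1)\, h(x,y)^{2s-2}.
\]

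The remaining point, and the main obstacle, is to convert $h^{2s}$ into $k^s$ and $h^{2s-2}$ into $k^{s-1}$ using the sign convention for complex powers of real numbers stated just before the proposition. Since $k(x,y)=h(x,y)^2>0$ whenever $h(x,y)\neq 0$, $k^s$ is the standard positive real power. On the open set $\{h>0\}$ both identifications hold without correction. On $\{h<0\}$, the convention $\log h=\log|h|+i\pi$ gives $h^{2s}=e^{2i\pi s}k^s$ and $h^{2s-2}=e^{2i\pi(s-1)}k^{s-1}$; these constant phase factors pass through the differential operator $E_{2s-1}\circ E_{2s}$ unchanged, and combine to produce the overall factor $e^{2i\pi(s-1)}/e^{2i\pi s}=e^{-2i\pi}=1$. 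Thus in both cases one recovers
\[
(E_{2s-1}\circ E_{2s})\, k(x,y)^s = b(2s)\,b(2s-1)\, k(x,y)^{s-1},
\]
as claimed. The pairing of the two parameters $2s$ and $2s-1$ is essential: a single application of the complex identity at parameter $t$ would leave an uncancelled phase $e^{\pm i\pi t}$ on $\{h<0\}$, which is precisely why the proposition is formulated in this composite form.
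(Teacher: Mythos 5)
Your proof is correct and follows essentially the same route as the paper: apply the complex Bernstein--Sato identity for $h$ twice at parameters $2s$ and $2s-1$, restrict to $V\times V$ via the restriction principle, and check that on $\{h<0\}$ the phase factors $e^{2i\pi s}$ and $e^{2i\pi(s-1)}$ arising from the convention $t^{2s}=e^{2i\pi s}(t^2)^s$ cancel. Your handling of the sign convention is in fact slightly more carefully written than the paper's own.
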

\begin{proof}

By applying twice the Bernstein-Sato identity on $\mathbb V$ for the values $2s$ and $2s-1$, we get
\[\big( \mathbb E_{2s-1} \circ \mathbb E_{2s}\big)h(x,y)^{2s} = b(2s)b(2s-1) h(x,y)^{2s-2}\ ,\]
where $b=b_{r,a}$ ($r=\rank V$ and  $a$ the first characteristic number of $V$, equal to the rank and to the  first complex characteristic number of $\mathbb  V$). Now  restrict this identity to $V$ by the restriction principle. When $h(x,y)>0$, this clearly implies the statement. When $h(x,y)<0$, oberve that $h(x,y)^{2s} = e^{-2i\pi s} k(x,y)^2$, whereas $h(x,y)^{2s-2} = e^{-2i\pi(s-1)} k(x,y)^s$, so that the statement also holds.
\end{proof}

\subsection{The non reduced case}

Let $V$ be a simple PJTS without complex structure and assume that it is non-reduced. Let $\mathbb V$ be its Hermitification. Let $h(x,y)$ be the complex fundamental kernel of $\mathbb V$. Recall that the fundamental kernel $k$ of $V$ is defined by $k(x,y)=h(x,y)$. As observed in Proposition \ref{posNR}, the kernel $k(x,y)$ is everywhere $\geq 0$. 

\begin{proposition} Let $x,y$ in $V$ and assume that $k(x,y)\neq 0$. Then
\[E_s k(x,y)^s = b(s) k(x,y)^{s-1}\ .
\]
where 
$ b= b_{2r,c-2}$.
\end{proposition}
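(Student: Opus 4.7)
The plan is to obtain this identity by restricting the complex Bernstein-Sato identity of Section 11 from the Hermitification $\mathbb V$ down to its real form $V$. By Proposition \ref{NRJF}, since $V$ is non-reduced and admits no complex structure, $\mathbb V$ is a simple PHJTS of rank $2r$; and from the relations recorded in subsection 9.4 its complex characteristic number satisfies $a_\mathbb C = c - 2$. Thus the polynomial $b_{2r,c-2}$ in the statement is precisely the polynomial $b(s) = b_{\rank \mathbb V,\,a_\mathbb C}(s)$ produced by the Bernstein-Sato identity of Section 11 applied to $\mathbb V$.

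First I would invoke the complex identity: for $z, w \in \mathbb V$ with $h(z,w) \neq 0$,
\[\mathbb E_s\bigl(z,\partial/\partial z\bigr)\, h(z,w)^s \;=\; b(s)\, h(z,w)^{s-1},\]
where $\mathbb E_s$ is the holomorphic differential operator built from the fundamental invariant polynomials $m_k$ of $\mathbb V$ via the formula \eqref{EsMk}, with $r$ and $d$ replaced respectively by $2r$ and $c-2$. Next I would identify $\mathbb E_s$ as the holomorphic extension of a real polynomial differential operator on $V$, which is exactly the $E_s$ of the proposition. By Proposition \ref{minC}, the complex generic minimal polynomial of $\mathbb V$ restricts on $V \times V$ to the real generic minimal polynomial of $V$, hence each $m_k$ is real-valued on $V \times V$. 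Since $\mathbb V = V \otimes_{\mathbb R} \mathbb C$, this means every $M_k$ is the holomorphic extension of a real polynomial differential operator on $V$; the same linear combination then defines $E_s$ on $V$, and its holomorphic extension is $\mathbb E_s$.

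The argument concludes by the restriction principle \eqref{complexD}. Given $x, y \in V$ with $k(x,y) = h(x,y) \neq 0$, Proposition \ref{posNR} provides the crucial sign information $h(x,y) > 0$, so $z \mapsto h(z,y)$ is real-analytic and strictly positive on a $V$-neighborhood of $x$. Choosing the principal real determination of the power, $h(z,y)^s$ coincides with $k(z,y)^s$ there, and \eqref{complexD} gives
\[E_s\, k(\,\cdot\,,y)^s(x) \;=\; \bigl(\mathbb E_s\, h(\,\cdot\,,y)^s\bigr)(x) \;=\; b(s)\, h(x,y)^{s-1} \;=\; b(s)\, k(x,y)^{s-1},\]
which is the claimed identity. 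The main technical point I anticipate is the verification that $\mathbb E_s$ truly descends to a real operator on $V$; this rests entirely on Proposition \ref{minC}, which ensures that all fundamental invariant polynomials involved in \eqref{EsMk} are real on $V \times V$. Once that is in hand, both the choice of determination of the logarithm and the application of the restriction principle are straightforward, thanks to the non-negativity of $k$ on $V \times V$ guaranteed by Proposition \ref{posNR}; no Bernstein-Sato computation on $V$ itself is required.
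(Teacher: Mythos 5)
Your proposal is correct and follows essentially the same route as the paper: restrict the complex Bernstein--Sato identity of Section 11 for the Hermitification $\mathbb V$ (of rank $2r$, with $a_\mathbb C=c-2$) to $V$ via the restriction principle \eqref{complexD}, using that $h(\cdot,y)$ is holomorphic and real-valued (indeed nonnegative, by Proposition \ref{posNR}) on $V$. Your extra care in checking that $\mathbb E_s$ descends to a real operator on $V$ via Proposition \ref{minC} is a point the paper leaves implicit, but it is the same argument.
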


\begin{proof} Observe that, for $y\in V$,  $h(x,y)$ is a holomorphic function of $x\in \mathbb V$, which is real-valued on $V$. Hence we may apply the restriction rule \eqref{complexD} to the Berstein-Sato identity \eqref{BSHJA} on $\mathbb V$. As the rank of $\mathbb V$ is $2r$ and  $ a_\mathbb C=c-2$, the polynomial $b$ is $b_{2r,a_\mathbb C} = b_{2r, c-2}$.
\end{proof}

\section{Meromorphic continuation of the intertwining operators}

Let $f$ be in $\mathcal C^\infty(X)$. The intertwining operator is given by
\[J_\lambda f (s) = \int_X \widetilde c (s,t)^{-\frac{1}{2} +\lambda} f(t) d \sigma(t)\ .
\]

We want to prove that $\lambda \longmapsto J_\lambda(f)(x)$ can be continued meromorphically to $\mathbb C$.

Observe that , for any $k\in K$ $\rho_\lambda(k)$  is the left translation by $k$, and similarly for $\rho_{-\lambda}^\sigma(k)$.  So $J_\lambda$ commutes to the left translation by elements of $K$. Hence the meromorphic continuation, if valid for a function $f$, is also valid for any $K$-translate of $f$. By a partition of unity argument, any function $f$ in $\mathcal C^\infty(X)$ can be written as a finite sum of $K$-translates of functions which have their supports contained in $\mathcal O$. So it is enough to prove the meromorphic continuation for functions having their support contained in $\mathcal O$. This remark allows to transfer the problem of meromorphic continuation from the compact picture to the noncompact picture.

Denote by $\mathcal C^\infty(X,\mathcal O)$ the space of smooth functions which have their support contained in $\mathcal O$.
Let $\iota_\lambda$ be the map defined on $\mathcal C^\infty(X,\mathcal O)$ by
\[\iota_\lambda f (x) = \big(c(x,x)^{-\frac{1}{2}}\big)^{\frac{1}{2}+\lambda} f(\kappa(x))\ .
\]
in accordance with the way densities should transform under the variable change $s=\kappa(x)$.

\begin{proposition} For any $f\in \mathcal C^\infty(X,\mathcal O)$,

\begin{equation} \iota_{-\lambda} (J_\lambda f) (x) = \int_V c(x,y)^{-\frac{1}{2} + \lambda} (\iota_\lambda f) (y) dy\ .
\end{equation}
\end{proposition}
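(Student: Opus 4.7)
The plan is essentially to perform the change of variable $t = \kappa(y)$ in the defining integral for $J_\lambda f$, and then verify that the bookkeeping of Jacobians and kernel transforms matches exactly the shift by $\frac{1}{2}$ built into the definitions of $\iota_{\pm\lambda}$. Since $f$ is supported in $\mathcal O = \kappa(V)$, the integral $\int_X$ can be replaced by $\int_V$ with respect to the pull-back measure $d\sigma(\kappa(y))$, and by the normalization \eqref{jack} this pull-back measure equals $c(y,y)^{-\frac{1}{2}}\,dy$.

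Concretely, first I would write, for $s = \kappa(x)$,
\[
J_\lambda f(\kappa(x)) = \int_V \widetilde c(\kappa(x),\kappa(y))^{-\frac{1}{2}+\lambda}\, f(\kappa(y))\, c(y,y)^{-\frac{1}{2}}\, dy,
\]
and then substitute the comparison formula \eqref{ctildec}, which gives
\[
\widetilde c(\kappa(x),\kappa(y))^{-\frac{1}{2}+\lambda} = c(x,x)^{-\frac{1}{2}(-\frac{1}{2}+\lambda)}\, c(x,y)^{-\frac{1}{2}+\lambda}\, c(y,y)^{-\frac{1}{2}(-\frac{1}{2}+\lambda)}.
\]
Next I would multiply by the prefactor $c(x,x)^{-\frac{1}{2}(\frac{1}{2}-\lambda)}$ coming from $\iota_{-\lambda}$. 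The two exponents on $c(x,x)$ sum to $-\frac{1}{2}\bigl((\frac{1}{2}-\lambda)+(-\frac{1}{2}+\lambda)\bigr)=0$, so the $c(x,x)$-dependence drops out entirely. This cancellation is the whole reason for the $\frac{1}{2}$-shift in the definition of $\iota_\lambda$ and is the conceptual content of the identity.

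Finally I would collect the factors involving $y$: the measure contributes $c(y,y)^{-\frac{1}{2}}$ and the kernel comparison contributes $c(y,y)^{-\frac{1}{2}(-\frac{1}{2}+\lambda)}$, giving together
\[
c(y,y)^{-\frac{1}{2}-\frac{1}{2}(-\frac{1}{2}+\lambda)} = c(y,y)^{-\frac{1}{2}(\frac{1}{2}+\lambda)} = \bigl(c(y,y)^{-\frac{1}{2}}\bigr)^{\frac{1}{2}+\lambda},
\]
which is precisely the weight that, when multiplied against $f(\kappa(y))$, produces $(\iota_\lambda f)(y)$.

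There is no real obstacle: the argument is a direct computation, and the only point that requires care is checking that the three exponents ($-\frac{1}{2}+\lambda$ coming from $\widetilde c$, $\frac{1}{2}+\lambda$ from $\iota_\lambda$, $\frac{1}{2}-\lambda$ from $\iota_{-\lambda}$, and the unweighted factor $-\frac{1}{2}$ from the Jacobian) combine correctly. Absolute convergence of the integral is guaranteed by the hypothesis on $\lambda$ (Proposition on \eqref{convergence}), so Fubini and the change of variable are justified with no further restriction, and the identity makes sense pointwise in $x$.
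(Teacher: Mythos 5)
Your computation is correct and follows exactly the paper's own route: change of variable $t=\kappa(y)$ with the normalization \eqref{jack}, substitution of the comparison formula \eqref{ctildec}, and verification that the $c(x,x)$-exponents cancel while the $c(y,y)$-factors assemble into the weight defining $\iota_\lambda f$. Nothing to add.
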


\begin{proof} 
\[ \iota_{-\lambda}(J_\lambda f)(x) = \big(c(x,x)^{-\frac{1}{2}}\big)^{\frac{1}{2}-\lambda}
\int_S \widetilde c (\kappa(x), t)^{-\frac{1}{2}+\lambda} f(t) d\sigma(t)\]
\[ =\big(c(x,x)^{-\frac{1}{2}}\big)^{\frac{1}{2}-\lambda}
\int_V \widetilde c (\kappa(x), \kappa(y))^{-\frac{1}{2}+\lambda}f(\kappa(y)) c(y,y)^{-\frac{1}{2}} dy
\]
\[ =\big(c(x,x)^{\frac{1}{2}}\big)^{-\frac{1}{2}+\lambda}
\int_V \widetilde c (\kappa(x), \kappa(y))^{-\frac{1}{2}+\lambda}(\iota_\lambda f)(y) \big(c(y,y)^{\frac{1}{2}}\big)^{-\frac{1}{2} +\lambda} dy
\]
\[ = \int_V c(x,y)^{-\frac{1}{2} +\lambda} (\iota_\lambda f)(y) dy\ ,
\]
where we used the change of variable $t=\kappa(y)$ first, and then \eqref{ctildec}.
\end{proof}

As the map $\iota_\lambda f$ depends holomorphically on $\lambda$ on $\mathbb C$,  the meromorphic continuation of $J_\lambda f$ can be deduced, by a routine argument from the meromorphic continuation of 
\[I_\lambda f(x) = \int_V c(x,y)^{-\frac{1}{2}+\lambda} f(y) dy
\]
where $f$ is a function in $\mathbb C_c^\infty(V)$.

In order to describe the meromorphic continuation, it will be easier to work with the analytic continuation in $s$ of the integral

\[J_s = J_s(f) = \int_V k(x,y)^s f(y) dy\ ,\]

the two problems being related by the equality
\[I_\lambda f = J_s f, \quad \text {for } s = -\frac{p}{4}+\frac{p\lambda}{2}\ .
\]

By theorem \ref{absconv}, the integral $J_s$ is absolutely convergent if $\Re s > -\frac{c}{2}$. Now the various Bernstein identities may be written as 
\[B_s(x,\frac{\partial}{\partial x}) k(x,y)^s = b(s) k(x,y)^{s-1}\]
where $B_s$ is a differential operator with polynomial coefficients, and $b$ is  polynomial in $s$. Hence, by an integration by parts,
\[J_{s-1} (f)= b(s)^{-1} \int_V k(x,y)^s (B_s^t f) (y) dy=b(s)^{-1} J_s(B_s^t f) .
\]
This can be used to define the meromorphic continuation near a point $s-1$ provided $s$ is not a pole of $J_s$ and  $b(s)\neq 0$. 

For $s=-\frac{c}{2}$, the integral $J_s1$ studied in section 7 is not convergent. As the integrand is a nonnegative function, it really means that  $-\frac{c}{2}$ is a pole of $J_s$. Then $1-\frac{2}{c}$ must be a zero of $b$. The set of poles is  the union of  the "left integral half lines" $\{\alpha- k, k=1,2\dots\}$ where $\alpha$ runs through  the zeroes of $b$ such that $\alpha -1- \leq \frac{c}{2}$. 

\subsection{ Meromorphic continuation for a PHJTS}

Let $V$ be a simple PHJTS, which is regarded as a simple PJTS admitting a complex structure. The constant $c$ in this case is equal to $2$, so that the integral $I_s$ is absolutely convergent if $\Re s > -1$. Let $a$ be the complex dimension of $V_{ij}$ for $i\neq j$.

\begin{theorem} Let $V$ be a simple PHJTS. The integral $J_s$ has a meromorphic extension to $\mathbb C$ with poles at
 
$\bullet$ $-1,-2,\dots, -k,\dots$ if $r=1$ or if $r\geq 2$ and $a$ is even.

$\bullet$ $-1,-2,\dots, -k,\dots$ and $-1-\frac{a}{2}, \dots, -k-\frac{a}{2}$, if  $a$ is odd.
\end{theorem}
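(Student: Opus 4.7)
The plan is to upgrade the Bernstein--Sato identity of subsection 12.1 into a functional equation that shifts $J_s$ by $1$, and then iterate. For a PJTS $V$ with complex structure, subsection 12.1 gives
\[\bigl(\mathbb{E}_s\,\overline{\mathbb{E}_s}\bigr)_x\, k(x,y)^s = b(s)^2\, k(x,y)^{s-1},\]
where $b(s)=\prod_{j=0}^{r-1}(s+j\,a/2)$ and the operator acts in the variable $x$, with coefficients depending polynomially on $s$ and $\overline s$. Since for a PHJTS the real characteristic number $c$ equals $2$, Theorem \ref{absconv} guarantees absolute convergence of $J_s(f)$ for $\Re s>-1$ and any $f\in\mathcal{C}_c^\infty(V)$, and on that half-plane $J_s(f)(x)$ is smooth in $x$. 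Differentiation under the integral sign is then legitimate and will convert the identity into the functional equation
\[b(s)^2\, J_{s-1}(f)(x) = \bigl(\mathbb{E}_s\,\overline{\mathbb{E}_s}\bigr)_x\, J_s(f)(x).\]

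I will then iterate. Assuming $J_s(f)$ is already extended meromorphically to $\Re s>-k$, the rewriting $J_s(f) = b(s+1)^{-2}(\mathbb{E}_{s+1}\overline{\mathbb{E}_{s+1}})_x J_{s+1}(f)$ produces a meromorphic extension to $\Re s>-k-1$ with one new layer of potential poles at the zeros of $b(s+1)$, i.e.\ at $s\in\{-1 - j\,a/2: 0\le j\le r-1\}$. Letting $k\to\infty$ gives a meromorphic extension to all of $\mathbb{C}$ with candidate pole set
\[\mathcal{P} = \{-k - j\,a/2 : k\ge 1,\; 0\le j\le r-1\}.\]

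It remains to simplify $\mathcal{P}$ in the three cases of the theorem, which is pure bookkeeping in $\tfrac{1}{2}\mathbb{Z}$. If $r=1$ only $j=0$ is available and $\mathcal{P} = \{-k:k\ge 1\}$. If $r\ge 2$ and $a$ is even, each $j\,a/2$ is a nonnegative integer, so $\mathcal{P}\subset -\mathbb{Z}_{\ge 1}$ and the choice $j=0$ already exhausts it. If $r\ge 2$ and $a$ is odd, then $j\,a/2$ lies in $\mathbb{Z}$ for $j$ even and in $\mathbb{Z} + \tfrac{1}{2}$ for $j$ odd; reindexing (absorbing the even multiples of $a/2$ into $k$) collapses $\mathcal{P}$ to the union $\{-k:k\ge 1\}\cup\{-k - a/2:k\ge 1\}$, which is exactly the list in the theorem.

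The principal obstacle is not the present theorem, whose proof is essentially routine once the Bernstein--Sato identity is available; the real technical content sits in subsection 12.1. A secondary point, needed only if one wishes the candidate poles to be actual (and not merely upper bounds), is to show that each element of $\mathcal{P}$ is genuinely attained. The pole at $s=-1$ is forced by the divergence of $J_s(\mathbf{1})$ at $\Re s = -c/2 = -1$ already established in Section 7; the remaining candidates can be detected by localizing the test function near a single primitive tripotent of a Jordan frame, where $k(x,y)$ factors and the residue reduces to a product of rank-one Selberg-type integrals with explicit Gamma factors in the spirit of the proof of Theorem \ref{absconv}.
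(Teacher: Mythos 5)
Your argument is the paper's argument: the Bernstein--Sato identity of subsection 12.1 yields a functional equation relating $J_{s-1}$ to $J_s$, one iterates to continue meromorphically strip by strip, and the pole set is the union of the translates $\{-k-j\,a/2\}$ of the zeros of $b(s)^2=b_{r,a}(s)^2$, which your case analysis correctly collapses to the lists in the statement. The only point where you diverge is in how the functional equation is derived, and there your justification as written does not hold up. You apply $\bigl(\mathbb E_s\overline{\mathbb E_s}\bigr)_x$ to $J_s(f)(x)$ and invoke differentiation under the integral sign on the strength of $J_s(f)$ being ``smooth in $x$'' on the half-plane of convergence $\Re s>-1$. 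But the $x$-derivatives of $k(x,y)^s$ involve $k(x,y)^{s-1},k(x,y)^{s-2},\dots$, which for $-1<\Re s\le 0$ are no longer locally integrable in $y$ near the zero set of $k(x,\cdot)$; so neither the smoothness of $J_s(f)$ in $x$ near the edge of the strip nor the interchange of $\partial_x$ and $\int_V$ is automatic there, and the iteration step $J_s=b(s+1)^{-2}(\mathbb E_{s+1}\overline{\mathbb E_{s+1}})_xJ_{s+1}$ is not yet licensed where you first need it.

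The repair is standard and is exactly what the paper does: use the symmetry $k(x,y)=k(y,x)$ to apply the Bernstein--Sato operator in the $y$-variable and integrate by parts onto the compactly supported test function, giving $J_{s-1}(f)=b(s)^{-2}\,J_s\bigl((\mathbb E_s\overline{\mathbb E_s})^t f\bigr)$. The right-hand side is a convergent integral against a new $\mathcal C^\infty_c$ function for all $\Re s>-1$, holomorphic in $s$ away from the zeros of $b$, and the identity propagates from $\Re s\gg 0$ by analytic continuation; the iteration then proceeds with no regularity questions about $J_s(f)$ itself. (Equivalently, you could keep the operator in $x$ but work with $J_s(f)$ as a distribution-valued holomorphic function, pairing against a second test function $\varphi(x)$ --- which is the same integration by parts in disguise.) With that substitution your proof coincides with the paper's. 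Your closing paragraph on showing the candidate poles are genuinely attained goes beyond what the paper proves --- its argument only establishes that $-c/2=-1$ is an actual pole --- and is a reasonable, if unverified, sketch.
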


\begin{proof} The Bernstein polynomial $b$ is given by  
 $b(s) =b_{r,a}(s)^2$ and its set of zeroes is
   \[\{0,-\frac{a}{2},\dots, -(r-1)\frac{a}{2}\}\ .\]  The result follows.
\end{proof}

\subsection{Meromorphic continuation for a reduced PJTS}

\begin{theorem} Let $V$ be a simple reduced PJTS. The integral $J_s$ has a meromorphic extension to $\mathbb C$ with poles at

$\bullet$ $-\frac{1}{2},-1,-\frac{3}{2},\dots, -\frac{k}{2},\dots$ if $r=1$ or if $r\geq 2$ and $a$ is even.

$\bullet$ $-\frac{1}{2},-1,-\frac{3}{2},\dots, -\frac{k}{2},\dots$  and $-\frac{a}{4}-\frac{1}{2},\dots, -\frac{a}{4}-\frac{k}{2},\dots$ if $a$ is odd.
\end{theorem}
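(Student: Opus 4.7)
The approach is to work in the noncompact picture: as explained at the start of Section~13, the meromorphic continuation of $J_\lambda$ reduces to that of
\[ J_s(f)\;=\;\int_V k(x,y)^s\,f(y)\,dy,\qquad f\in\mathcal{C}_c^\infty(V),\quad s=-\tfrac{p}{4}+\tfrac{p\lambda}{2}. \]
Since $V$ is reduced we have $c=1$, and Theorem~\ref{absconv} (via the equivalence $I_\lambda f = J_s f$) guarantees that $J_s$ is absolutely convergent on the half-plane $\Re s > -\tfrac12$. Note also that $k=h^2$ is everywhere nonnegative, so $k^s$ is unambiguously defined and the only issue for integrability is the behavior near the hypersurface $\{k(x,\cdot)=0\}$, controlled by $\Re s > -\tfrac12$.

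To push past this half-plane I would invoke the Bernstein--Sato identity of subsection~12.2,
\[ (E_{2s-1}\circ E_{2s})\,k(x,y)^s\;=\;b(2s)\,b(2s-1)\,k(x,y)^{s-1}, \]
with $b(t)=b_{r,a}(t)=t(t+\tfrac{a}{2})(t+a)\cdots(t+(r-1)\tfrac{a}{2})$. Using the symmetry $k(x,y)=k(y,x)$ the operators may be viewed as acting on $y$; integrating by parts against $f\in\mathcal{C}_c^\infty(V)$ in the half-plane of absolute convergence produces the functional equation
\[ b(2s+2)\,b(2s+1)\,J_s(f)\;=\;J_{s+1}\bigl((E_{2s+1}\circ E_{2s+2})^{t}f\bigr). \]
Iterating this relation $n$ times realizes $J_s$ as a meromorphic function on $\Re s > -\tfrac12 - n$, and therefore on all of $\mathbb{C}$. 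The candidate poles form the set
\[ \mathcal{P}\;=\;\Bigl\{-\tfrac12-(j-1)\tfrac{a}{4}-n,\;\;-1-(j-1)\tfrac{a}{4}-n\;:\;1\le j\le r,\; n\in\mathbb{Z}_{\ge 0}\Bigr\}. \]

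The remaining task is to identify $\mathcal{P}$ with the set claimed in the theorem by a case analysis on the parity of $a$. If $a$ is even (or if $r=1$, so no $j\ge 2$ term appears), then $(j-1)\tfrac{a}{4}\in\tfrac12\mathbb{Z}$ for every $j$, so $\mathcal{P}\subset\{-k/2:k\ge 1\}$; equality follows because the $j=1$ terms already contribute $-\tfrac12$ and $-1$, whose integer shifts exhaust the list. If $a$ is odd, split the index set by the parity of $j-1$: for $j-1$ even we have $(j-1)\tfrac{a}{4}\in\tfrac12\mathbb{Z}$, producing the first family $\{-k/2:k\ge 1\}$; for $j-1$ odd we have $(j-1)\tfrac{a}{4}\equiv \tfrac{a}{4}\pmod{\tfrac12}$, and once the half-integer part is absorbed into the shift $n$, these terms produce precisely $\{-\tfrac{a}{4}-k/2:k\ge 1\}$.

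The main obstacle I expect is the combinatorial bookkeeping in this last step — verifying that for every allowed $j$ the contribution collapses into one of the two listed arithmetic progressions and that no spurious pole location survives. A second, more technical point is the justification of the integration by parts across the zero set of $k(x,\cdot)$; this is standard, relying on the fact that the Bernstein--Sato identity is an identity of distributions on $V$ valid on the strip of absolute convergence, which then propagates by analytic continuation.
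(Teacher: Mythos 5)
Your proposal is correct and follows essentially the same route as the paper: reduce to the noncompact integral $J_s$, use absolute convergence for $\Re s>-\tfrac12$ (since $c=1$), apply the Bernstein--Sato identity of subsection~12.2 with $b(2s)b(2s-1)$ and integrate by parts to obtain the same functional equation, then read off the poles from the zeros $\{-(j-1)\tfrac{a}{4}\}\cup\{\tfrac12-(j-1)\tfrac{a}{4}\}$ according to the parity of $a$. The parity bookkeeping you carry out is exactly what the paper leaves implicit in its ``the result follows.''
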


\begin{proof}
In this case, $c=1$, so that the integral $I_s$ converges absolutely for $\Re s > -\frac{1}{2}$. The Bernstein polynomial $b$ is given by \[b(s)  = b_{r,a}(2s)b_{r,a}(2s-1)  \]

The zeroes of $b(2s)b(2s-1)$ are are \[ \{0, -\frac{a}{4},\dots -(r-1)\frac{a}{4}\}\cup\{ \frac{1}{2},\frac{1}{2}-\frac{a}{4},\dots,\frac{1}{2} -(r-1)\frac{a}{4}\}\ .\]
The result follows.
\end{proof}

\subsection{ The non reduced case}

\begin{theorem} Let $V$ be a simple PJTS without complex structure and non reduced. Then the integral $J_s$ has a meromorphic extension to $\mathbb C$ with poles at

$\bullet$ $-\frac{c}{2},-\frac{c}{2}-1,\dots, -\frac{c}{2} -k,\dots$ if $r=1$ or if $r\geq 2$ and $c$ is even

$\bullet$ $-\frac{c}{2},-\frac{c}{2}-1,\dots, -\frac{c}{2} -k,\dots$ and $-c+1,-c,-c-1,\dots, -c-k,\dots$ if $r\geq 2$ and $c$ is odd.

\end{theorem}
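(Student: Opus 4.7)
The proof follows the same template as Subsections 13.1 and 13.2. The new inputs are the Bernstein--Sato identity from Subsection 12.3 with polynomial $b = b_{2r,c-2}$, and the positivity $k(x,y)\geq 0$ of the fundamental kernel from Proposition \ref{posNR}.

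First I rewrite the Bernstein--Sato identity
\[
E_s(x,\partial_x)\, k(x,y)^s = b(s)\, k(x,y)^{s-1}
\]
in its integrated form $J_{s-1}(f) = b(s)^{-1}\, J_s(E_s^t f)$ for $f\in \mathcal{C}_c^\infty(V)$, and iterate to produce the meromorphic extension of $J_s(f)$ from the half-plane $\Re s > -c/2$ (Theorem \ref{absconv}) to all of $\mathbb C$. The potential poles are located in the set $\{\alpha - m : \alpha\in Z(b),\ m\geq 1\}$, where
\[
Z(b) = \bigl\{\alpha_j = -\tfrac{j(c-2)}{2} : 0\leq j\leq 2r-1\bigr\}.
\]

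Second I apply the selection rule of the preamble to Section 13: since $J_s(1)$ is strictly positive on $\Re s > -c/2$ (Proposition \ref{posNR}) and its divergence at $s = -c/2$ forces an actual pole of $J_s$ there, only zeros $\alpha$ of $b$ with $\alpha - 1 \leq -c/2$, equivalently $\alpha\leq 1 - c/2$, contribute true poles. Since $c\geq 3$ for every simple non-reduced PJTS (Table 4), the condition $-j(c-2)/2 \leq 1 - c/2$ reduces to $j\geq 1$, so the zero $\alpha_0 = 0$ is discarded.

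Third I compute the union $\bigcup_{j=1}^{2r-1}\{\alpha_j - m : m\geq 1\}$. The families $j=1$ and $j=2$ give respectively $\{-c/2 - k : k\geq 0\}$ and $\{-c+1 - k : k\geq 0\}$. For each $j\geq 3$, the identity $\alpha_j - m = \alpha_1 - \bigl((j-1)(c-2)/2 + m\bigr)$ when $j$ is odd, resp.\ $\alpha_j - m = \alpha_2 - \bigl((j-2)(c-2)/2 + m\bigr)$ when $j$ is even, shows that the $\alpha_j$-family is a sub-progression of the $\alpha_1$- or $\alpha_2$-family. The three cases of the theorem follow by inspection: for $r=1$ only $\alpha_1$ is available and only $\{-c/2 - k\}$ remains; for $r\geq 2$ and $c$ even, $\alpha_2 - \alpha_1 = -(c-2)/2$ is a nonnegative integer, so the $\alpha_2$-family sits inside the $\alpha_1$-family and again only $\{-c/2 - k\}$ remains; for $r\geq 2$ and $c$ odd, the $\alpha_1$-family is half-integer and the $\alpha_2$-family integer, hence the two families are disjoint and both persist.

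The principal obstacle is justifying the selection rule, specifically showing that $\alpha_0 = 0$ does not produce an actual pole. The standard cancellation argument writes $J_s(f) = \prod_{j=1}^k b(s+j)^{-1}\, J_{s+k}(\cdots f)$ for $k$ so large that $\Re(s+k) > -c/2$; the numerator is then holomorphic in that half-plane by Theorem \ref{absconv}, while on the other hand the left-hand side is itself holomorphic on $\Re s > -c/2$ by the same theorem. Any factor $b(s+j)^{-1}$ with pole in this half-plane must therefore be cancelled by a zero of the numerator, and this cancellation propagates by analytic continuation to later iterates, eliminating every potential pole originating from $\alpha_0$.
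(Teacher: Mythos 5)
Your proposal is correct and follows essentially the same route as the paper: it invokes the Bernstein--Sato identity of Subsection 12.3 with $b=b_{2r,c-2}$, discards the zero $\alpha_0=0$ by the selection rule (since $-1>-\frac{c}{2}$ when $c\geq 3$), and checks that the half-lines coming from $\alpha_j$, $j\geq 3$, are absorbed into those of $\alpha_1$ and $\alpha_2$, which merge exactly when $c$ is even; your treatment of the selection rule and of the higher zeros is in fact more explicit than the paper's. The only blemish is a harmless sign slip: it is $\alpha_1-\alpha_2=\frac{c-2}{2}$ that is a nonnegative integer, not $\alpha_2-\alpha_1$.
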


\begin{proof}
In this case, the first pole is at $-\frac{c}{2}$, and the polynomial $b$ is equal to $b_{2r,c-2}$ which has for zeroes
\[ \{0,-\frac{c}{2}+1,\dots,(2r-1) (-\frac{c}{2}+1\}\ ,
\]
As $c\geq 3$, $-1>-\frac{c}{2}$ does not match the condition to be a pole. The second zero of $b$  corresponds to the first pole and produces the first family of poles. If $r=1$ this exhausts the possibilites, but  if $r\geq 2$ and $c$ is odd, the third zero of $b$ contributes to a second family of poles.
\end{proof}

\medskip
\footnotesize{\noindent Address\\ Jean-Louis Clerc\\Institut Elie Cartan, Universit\'e de Lorraine, 54506 Vand\oe uvre-l\`es-Nancy, France\\}
\medskip
\noindent \texttt{{jean-louis.clerc@univ-lorraine.fr
}}

\end{document}